\documentclass{amsart}
\usepackage[all]{xy}
\usepackage{amssymb,latexsym, amscd, amsthm, amsmath, upgreek, bm, dsfont, mathrsfs}
\newtheorem{theorem}{Theorem}[section]
\newtheorem{lemma}[theorem]{Lemma}
\newtheorem{corollary}[theorem]{Corollary}
\newtheorem{proposition}[theorem]{Proposition}
\newtheorem{conjecture}[theorem]{Conjecture}

\theoremstyle{definition}
\newtheorem{definition}[theorem]{Definition}

\theoremstyle{remark}
\newtheorem{remark}[theorem]{\bf Remark}

%\numberwithin{equation}{section}

%    Absolute value notation

\newcommand{\zl}{\mathbb Z_l}
\newcommand{\ql}{\mathbb Q_l}
\newcommand{\C}{\mathbb C}

\newcommand{\Q}{\mathbb Q}
\newcommand{\G}{\mathbb G}
\newcommand{\Z}{\mathbb Z}
\newcommand{\R}{\mathbb R}

\newcommand{\N}{\mathbb N}
\newcommand{\F}{\mathbb F}

\newcommand{\CO}{\mathcal O}
\newcommand{\blambda}{{\boldsymbol{\lambda}}}
\newcommand{\wlambda}{\widehat{\lambda}}

\begin{document}

\title[$K$--theory of totally real and CM number fields]{Hecke characters and
the $K$--theory of totally real and CM number fields}
%\pagestyle{headings}

%    Information for first author
\author[G. Banaszak]{Grzegorz Banaszak*}
\address{Department of Mathematics and Computer Science, Adam Mickiewicz University,
Pozna\'{n} 61614, Poland}
\email{banaszak@amu.edu.pl}

%    Information for second author
\author[C. Popescu]{Cristian Popescu**}
\address{Department of Mathematics, University of California, San Diego, La Jolla, CA 92093, USA}
\email{cpopescu@math.ucsd.edu}

%    General info
\subjclass[2000]{19D10, 11G30}
\date{}
\keywords{Quillen $K$-theory of number fields; Special values of $L$-functions; Hecke characters;
Euler Systems.}
\thanks{*Partially supported by research grant NCN  2013/09/B/ST1/04416 (National Center for Science of Poland.)\newline\indent
**Partially supported by NSF grants
DMS-901447 and DMS-0600905}

\begin{abstract} Let $F/K$ be an abelian extension of number fields with $F$ either CM or totally real and $K$
totally real. If $F$ is CM and the
Brumer-Stark conjecture holds for $F/K$, we construct a family of $G(F/K)$--equivariant
Hecke characters for $F$ with infinite type equal to a special value of
certain $G(F/K)$--equivariant $L$--functions.
Using results of Greither--Popescu \cite{GP} on the Brumer--Stark conjecture we
construct $l$--adic imprimitive versions of these characters, for primes $l > 2$.
Further, the special values of these $l$--adic Hecke characters are used to construct
$G(F/K)$--equivariant Stickelberger--splitting maps in the Quillen localization sequence for $F$, extending the results obtained in \cite{Ba1} for $K=\Q$. We also apply the Stickelberger--splitting maps to construct special
elements in $K_{2n}(F)_l$ and analyze the Galois module structure of the group $D(n)_l$ of divisible elements in $K_{2n}(F)_l$. If $n$ is odd, $l\nmid n$,  and $F=K$ is a fairly general totally real number field, we study the cyclicity of $D(n)_l$ in relation to the classical conjecture of Iwasawa on class groups of cyclotomic fields and its potential generalization to a wider class of number fields. Finally, if $F$ is CM, special values of our $l$--adic Hecke characters are used to construct Euler systems in odd $K$-groups $K_{2n+1}(F, \Z/l^k)$.
These are vast generalizations of Kolyvagin's Euler system of Gauss sums \cite{Rubin}
and of the $K$--theoretic Euler systems constructed in \cite{BG1} when $K=\Q$.
\end{abstract}

\maketitle

%-----------Introduction------------------

\section{Introduction}

\noindent {\bf Notation.}

Let $L$ be a number field. For a nontrivial $\CO_L$--ideal ${\bf a},$
we let, as usual, $N{\bf a}:=|\CO_L/{\bf a}|$ denote the norm of ${\bf a}$ and $\text{Supp}(\bf a)$ denote
the set of distinct prime $\CO_L$--ideals which divide ${\bf a}$. If $M/L$ is a finite abelian extension and the $\CO_L$--ideal ${\bf a}$ is prime then $G_{\bf a}$ denotes
the decomposition group associated to ${\bf a}$, viewed as a subgroup of $G(M/L)$. Further, if  ${\bf a}$ is (not necessarily prime) but coprime to the conductor of $M/L$,
then $\sigma_{\bf a}$ denotes the Frobenius element associated to ${\bf a}$ in $G(M/L)$. We will let $S_{\infty}(L)$ denote the set of archimedean primes of $L$, $I_L$ the group of
fractional $\CO_L$--ideals and $I_L({\bf a})$ the group of fractional $\CO_L$ ideals which are coprime to ${\bf a}$.
\medskip

For a prime number $l$ the symbol $\omega_{L,l}$ denotes the $l$--adic cyclotomic character:
$$\omega_{L,l}: G_L\to\zl^\times.$$
Recall that $G_L :=G(\overline L/L)$ acts on the $\Z_l$-modules $\Z_l (n),$ $\Q_l (n)$ and
$\Q_l/\Z_l(n) := \Z_l (n) \otimes_{\Z_l} \Q_l/\Z_l = \Q_l (n)/ \Z_l (n)$  via the $n$--th power
of $\omega_{L,l}.$
\medskip

For $n\geq 1$, let $w_n(L)_l := |(\Q_l/\Z_l)(n)^{G_L}|$ and
$$w_n(L):=\prod_{l \geq 2} \,  w_n(L)_l.$$
Note that
$w_1(L)_l=|\mu_L\otimes_{\bf Z}\Z_l|$, where $\mu_L$ is the group of roots of unity in $L$.
For simplicity, we let $w_L:=w_1(L)=|\mu_L|$.
\medskip

If $A$ is an abelian group and $k\in\Z_{\geq 0}$ we let $A[l^k]$ denote the $l^k$--torsion subgroup of $A$ and
$A_l:= A \otimes_{\Z}\Z_l.$
\medskip

For a unital ring $R$ and an integer $m\geq 0$, $K_m(R)$ and $K_m(R, \Z/l^k)$ denote the corresponding
Quillen $K$--group and $K$--group with coefficients in $\Z/l^k$, respectively.

\bigskip

\subsection{Hecke characters} In this paper, we consider abelian extensions $F/K$ of number fields, where $F$ is either CM or totally real and $K$ is totally real. We consider two $\CO_K$--ideals ${\bf f}$ and
${\bf b}$, such that ${\bf f}$ is divisible by the (finite) conductor of $F/K$ and ${\bf b}$ is coprime to ${\bf f}$. To the data $(F/K, {\bf f}, {\bf b})$ one associates the Galois equivariant holomorphic $L$--function:
$$\Theta_{\bf f, b}: \C\to \C[G],$$
$$\Theta_{\bf f, b}(s):=(1-N{\bf
b}^{1+s} \cdot\sigma_{\bf b}^{-1}) \cdot \sum_{\sigma\in G(F / K)}\zeta_{\bf f}(\sigma,
s)\cdot\sigma^{-1}.$$
  The special values $\Theta_n({\bf b, f}):= \Theta_{\bf f, b}(-n)$,
 for all $n\in\Bbb Z_{\geq 0}$, are what Coates \cite{C} calls higher Stickelberger elements. According to a
 deep theorem of Deligne--Ribet \cite{DR},
 $$\Theta_n({\bf b, f})\in \Z[G(F/K)], \qquad\text{ for all } n\in\Z_{\geq 0},$$
 as long as ${\bf b}$ is coprime to $w_{n+1}(F)$. In particular, if  ${\bf b}$ is coprime to $w_F$, then $$\Theta_0({\bf b, f})\in\Z[G(F/K)].$$
Now, let us assume that $F$ is CM. An equivalent formulation of
the Brumer-Stark conjecture (see \cite{Popescu-PCMI}) is the following.

\begin{conjecture}[BrSt$(F/K, {\bf f})$, Brumer-Stark] Let $F/K$ and ${\bf f}$ be as above. Then, for any prime $\CO_K$--ideal ${\bf b}$ which is coprime to $w_F\cdot{\bf f}$, we have
$$\Theta_0({\bf b, f})\in{\rm Ann}_{\Z[G(F/K)]}{\rm CH}^1(F)_{T_{\bf b}}^0, $$
where ${\rm CH}^1(F)_{T_{\bf b}}^0$ is the Arakelov class--group associated to $(F, {\bf b})$ defined in \S3.1.
\end{conjecture}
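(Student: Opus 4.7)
The plan is to reduce to $l$--primary parts and prove the annihilation prime-by-prime. Since $\text{CH}^1(F)_{T_{\bf b}}^0$ is finitely generated, writing it as $\bigoplus_l \text{CH}^1(F)_{T_{\bf b}}^0 \otimes \Z_l$, it suffices to show $\Theta_0({\bf b, f})$ annihilates each $l$--part in $\Z_l[G(F/K)]$. The case $K = \Q$ and $F$ a cyclotomic field is the classical theorem of Stickelberger; the general strategy is to reduce to that via Iwasawa theory over the cyclotomic tower.

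For odd $l$, I would pass to the cyclotomic $\Z_l$--extension $F_\infty/F$ with Galois group $\mathcal{G} = G(F_\infty/K)$, and work over the Iwasawa algebra $\Lambda = \Z_l[[\mathcal{G}]]$. The $b$--smoothed Stickelberger elements $\Theta_n({\bf b, f})$ are compatible under norm maps and assemble into a single element $\Theta_\infty({\bf b}) \in \Lambda$ which $l$--adically interpolates all higher Stickelberger elements under twist by $\omega_{F,l}^n$. The Equivariant Main Conjecture of Iwasawa theory for the totally real field $K$ (Wiles, refined equivariantly by Ritter--Weiss and in the form used by Greither--Popescu \cite{GP}) asserts that $\Theta_\infty({\bf b})$ generates the Fitting ideal over $\Lambda$ of an appropriate Iwasawa module $\mathcal{A}_\infty$ built from the inverse limit of $l$--parts of suitable ray class groups modified at primes above ${\bf b}$. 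Since the Fitting ideal is contained in the annihilator ideal, we obtain $\Theta_\infty({\bf b}) \cdot \mathcal{A}_\infty = 0$.

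The final step is co-descent from $F_\infty$ to $F$ at $n = 0$, identifying the co-invariants of $\mathcal{A}_\infty$ under $G(F_\infty/F)$ with the $l$--part of the Arakelov class group $\text{CH}^1(F)_{T_{\bf b}}^0$; the archimedean contribution to the Arakelov divisor class group is what matches the genus-theoretic contribution coming from descent in the totally real tower. The Deligne--Ribet smoothing by $(1 - N{\bf b}^{1+s} \sigma_{\bf b}^{-1})$, together with the hypothesis $({\bf b}, w_F \cdot {\bf f}) = 1$, ensures both the integrality of $\Theta_0({\bf b, f})$ and the exactness of this descent by killing the trivial zeros at the prime ${\bf b}$.

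The two principal obstacles are the upgrade from a characteristic-ideal statement to a Fitting-ideal (hence annihilator) statement, which for $l$--parts of $G(F/K)$ that are not cyclic requires the freeness-up-to-pseudo-null of the smoothed Iwasawa module and is the main technical content of the equivariant main conjecture; and the case $l = 2$, where the main conjecture is sensitive to $\mu$--invariants and archimedean contributions, so one expects the paper to restrict to $l > 2$ (as foreshadowed in the abstract) and invoke the Greither--Popescu results \cite{GP} rather than proving this conjecture from scratch.
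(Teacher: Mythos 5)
This statement is a \emph{conjecture}, not a theorem, and the paper offers no proof of it. At the time of writing, the Brumer--Stark conjecture (even away from $2$) was open; the paper only \emph{assumes} it (Proposition \ref{Hecke-characters} is explicitly conditional on ${\rm BrSt}(F/K, S_{\bf f'})$), and for unconditional results it retreats to the weaker, imprimitive, $l$--adic Theorem \ref{GP} of Greither--Popescu, which requires $l>2$, $\mu_{F,l}=0$, and forces ${\bf f'}$ to contain all $l$--adic primes. Your write-up should therefore not be graded as a proof of the statement: the task was to reproduce the paper's argument, and here the paper has none.

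That said, your outline is a fair sketch of the known \emph{strategy} toward the conjecture, and it correctly anticipates the paper's actual fallback. A few corrections of substance. First, the reduction to $l$--primary parts is not as clean as you suggest: ${\rm CH}^1(F)_{T_{\bf b}}^0$ is \emph{not} finite (nor finitely generated) --- it has a compact connected component of positive dimension, as the paper warns explicitly in \S3.1. One cannot write it as $\bigoplus_l \,{\rm CH}^1(F)_{T_{\bf b}}^0\otimes\Z_l$; the correct reduction passes through the fact (Remark \ref{annihilation-prime}) that the annihilation statement is equivalent to annihilation of the finite ray class group $Cl(\CO_F)_{\bf b}$ together with a ``local at $T_{\bf b}$'' unit condition, and \emph{that} finite group decomposes $l$ by $l$. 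Second, the obstacles you name in your last paragraph are precisely why this is a conjecture and not a theorem: upgrading the characteristic-ideal statement of the classical Main Conjecture to a Fitting/annihilator statement over the full group ring, and handling co-descent from $F_\infty$ to $F$, were exactly the missing ingredients (and what Greither--Popescu supply only under the imprimitivity and $\mu=0$ hypotheses). So the honest conclusion is that your ``proof'' terminates in the same two obstructions it identifies, which is consistent with the statement being open; if your goal was to mirror the paper, the correct answer would simply have been that the paper states this as a conjecture, cites \cite{Popescu-PCMI} for the equivalence with the classical Brumer--Stark formulation, and proceeds conditionally.
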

\noindent Under the assumption that the Brumer--Stark conjecture holds for $(F/K, {\bf f})$, in \S3.1 (see Proposition \ref{Hecke-characters}) we construct $G(F/K)$--equivariant Hecke characters
$$\lambda_{\bf b, f}: I_F({\bf b})\to F^\times, $$
of conductor ${\bf b}$ and infinite type $\Theta_0({\bf b, f})$, for all $\CO_K$--ideals ${\bf b}$ coprime
to $w_F\cdot{\bf f}$.  For ${\bf a}\in I_F({\bf b})$,
the value $\lambda_{\bf b, f}({\bf a})$ is the unique element in $F^\times$ with Arakelov divisor
$${\rm div}_F(\lambda_{\bf b, f}({\bf a}))=\Theta_0({\bf b, f})\cdot{\bf a}$$
and with some additional arithmetic properties (see \S3.1 for details).

\medskip
Weil's Jacobi sum Hecke characters \cite{Weil} can be recovered from our construction for $K=\Bbb Q$.
The values of our characters $\lambda_{\bf b, f}$ are generalizations of the classical Gauss sums, which arise in Weil's construction.
If viewed in towers of abelian CM extensions of a fixed $F/K$, these values satisfy norm compatibility relations (see Lemma \ref{norm-lemma})
which lead to Euler systems for the algebraic group $\Bbb G_m$, generalizing the Euler system of Gauss sums of Kolyvagin
and Rubin \cite{Rubin} (see Lemma \ref{norm-lemma} and Remark \ref{Euler-system-remark}). It is also worth remarking that our construction of Hecke characters is somewhat more refined (in that it
keeps track of conductors and Galois theoretic and arithmetic properties of special values) than that carried out by Yang in \cite{Yang}, following ideas of Hayes \cite{Hayes}.
\medskip

The Brumer--Stark conjecture is not fully proven yet. However, the results
of Greither--Popescu \cite{GP} show that if the classical Iwasawa invariant $\mu_{F,l}$
vanishes (as conjectured by Iwasawa), then an $l$--adic, imprimitive version of BrSt$(F/K, {\bf f})$ holds, for all
primes $l>2$ (see Theorem \ref{GP} below for the precise result). As a consequence, under the vanishing hypothesis above, in \S3.2 (see Lemma \ref{GP-Hecke-characters}) we construct, for all primes $l>2$, the
$l$--adic $\Z_l[G(F/K)]$--equivariant versions
$$\lambda_{{\bf b}, {\bf f}}: I_F({\bf b})\otimes\Z_l\to F^\times\otimes\Z_l$$
of the Hecke characters above, provided that ${\bf f}$ is divisible by all $l$--adic primes in $K$ (an imprimitivity condition.)
These $l$--adic, imprimitive Hecke characters are sufficient for our applications to $K$--theory in this paper.

\subsection{Euler Systems in odd $K$--theory with coefficients} In the case where $F$ is CM, in \S7 we push our generalization of Gauss sums farther and use the values of our ($l$--adic, imprimitive) Hecke characters
along with Bott elements to construct Euler systems for the odd $K$--theory $K_{2n+1}(F, \Z/l^k)$ with coefficients in  $\Z/l^k$,
for all $n\geq 0$ and all primes $l>2$. For $n=0$ and $K=\Q$ one recovers the Euler System of Gauss sums (modulo $l^k$) of Kolyvagin and
Rubin \cite{Rubin}.
For $n\geq 1$ and $K=\Q$ one recovers the $K$--theory Euler systems constructed in \cite{BG1}.
(See Theorem \ref{Theorem ES2} and Remark \ref{Euler-system-remark} for details.)

\subsection{Stickelberger-splitting maps in $K$--theory} Assume that $F$ is either CM or totally real, fix a prime $l>2$, an integer $n\geq 1$ and $\CO_K$--ideals ${\bf b}$ and ${\bf f}$ as above. We consider the
$l$--torsion part of the Quillen localization sequence (\cite{Q1} and \cite{So1})
\begin{equation}\label{Quillen localization sequence}\xymatrix{0\ar[r] &K_{2n}(\CO_F)_l\ar[r] &K_{2n}(F)_l\ar[r]^{\partial_F\quad} &\bigoplus_{v}K_{2n-1}(k_v)_l\ar[r] &0. }\end{equation}
Above, $v$ runs over all the maximal ideals of $\CO_F$ and $k_v$ is the residue field of $v$. In \S4 (see Theorem \ref{lambda-properties}), we use special values of the
$l$--adic imprimitive Hecke characters for extensions $F(\mu_{l^k})/K$, with $k\geq 1$, to construct a morphism
$$\Lambda \,\, : \,\, \bigoplus_{v}K_{2n-1}(k_v)_l\longrightarrow  K_{2n}(F)_l$$
of $\Z_l[G(F/K)]$--modules, such that for all $x \in \bigoplus_{v}K_{2n-1}(k_v)_l$
$$(\partial_F\circ\Lambda)(x)= x^{\l^{v_l(n)}\cdot\Theta_0({\bf b}, {\bf f})}.$$
Following \cite{Ba1}, we call $\Lambda$
a Stickelberger--splitting map for exact sequence \eqref{Quillen localization sequence}. As shown in \cite{Ba1} and \cite{BP} the construction of such maps has far reaching arithmetic applications.
 The main idea behind constructing $\Lambda$ is as follows: For each maximal $\CO_K$--ideal $v_0$ we pick an $\CO_F$--prime $v$ dividing $v_0$ and let $l^k:=|K_{2n-1}(k_v)_l|$.
 Then we pick  a prime $w$ in $E:=F(\mu_{l^k})$ which divides $v$ and consider the special element $\lambda^\ast_{\bf b, f}(w)\in E^\times\otimes\Z_l$, where
 $$\lambda^\ast_{\bf b, f}: I_E\otimes\Z_l\to E^\times\otimes\Z_l$$ is a carefully chosen $\Z_l[G(E/K)]$--linear extension to the group $I_E$ of all fractional ideals in $E$ of the $l$--adic Hecke character $\lambda_{\bf b, f}$ associated to the data $(E/K, {\bf b}, {\bf f}).$  Next, we map
 $\lambda^\ast_{\bf b, f}(w)$ into $E^\times/E^{\times l^k}\simeq K_1(E, \Z/l^k)$. Then we construct the special element:
 $$Tr_{E/F}(\lambda^\ast_{\bf b, f}(w)\ast b(\beta(\xi_{l^k})^{\ast n}))^{\gamma_l} \in K_{2n}(F)[l^k],$$
where $\xi_{l^k}$ is a generator of $\mu_{l^k}$ in $E,$ $\beta(\xi_{l^k}) \in K_2(E, \Z/l^k)$ is the
corresponding Bott element, $\gamma_l\in\Z_l[G(F/K)]$ is an exponent defined in \eqref{gamma_l} and \newline
$${b: K_{2n+1}(E, \Z/l^k)\to K_{2n}(E)[l^k]}$$ is the Bockstein homomorphism. Consequently, for any generator
$\xi_v$ of $K_{2n-1}(k_v)_l$ there is a unique group morphism
$$\Lambda_v: K_{2n-1}(k_v)_l\to K_{2n}(F)_l,$$
$$\Lambda_v(\xi_v)=Tr_{E/F}(\lambda^\ast_{\bf b, f}(w)\ast b(\beta(\xi_{l^k})^{\ast n}))^{\gamma_l}.$$
For a carefully chosen generator $\xi_v$ (see Definition \ref{picking xi_v}) the map $\Lambda_v$ satisfies
$$\partial_F\circ \Lambda_v(\xi_v)=\xi_v^{\l^{v_l(n)}\cdot\Theta_0({\bf b}, {\bf f})}$$
and is $\Z_l[G_v]$--equivariant (see Theorem \ref{local-lambda-theorem}).
The map $\Lambda$ is the unique $\Z_l[G(F/K)]$--linear morphism which equals
$\Lambda_v$ when restricted to  $K_{2n-1}(k_v)_l$, for all the chosen $\CO_F$--primes $v$ (see Theorem \ref{lambda-properties}).

The above construction generalizes to arbitrary totally real fields $K$ the construction of \cite{Ba1}
done in the case $K=\Q$. The above construction is very different from that in \cite{BP} and it has the advantage of being $\Z_l[G(F/K)]$--linear unlike the one in loc.cit.,
a property which leads to new arithmetic applications, as shown below.

\subsection{Divisible elements in $K$--theory and Iwasawa's conjecture}

For a number field $L$ and an $n > 0$ the group of divisible elements in $K_{2n}(L)_l$ is given by
$$div(K_{2n}(L)_l):=\bigcap_{k>0}K_{2n}(L)_l^{\,l^k}.$$
It is well known that the groups $div(K_{2n}(L)_l)$ are contained in $K_{2n}(\CO_L)_l$ and they are
the correct higher $K$--theoretic analogues of the ideal--class group $Cl(\CO_L)_l = (K_0(O_L)_{tor})_l$ (see \cite{Ba2} and \cite{GP}, for example). The group
$div(K_{2n}(L)_l)$ is also one of the main obstructions (see \cite[Section 4 and Theorem 6.4]{Ba3}) to the splitting of exact sequence \eqref{Quillen localization sequence} (in the category of $\Z_l$--modules).
In particular $div(K_{2n}(\Q)_l)$ is the only obstruction
\cite[Corollary 6.6]{Ba3} to the splitting of \eqref{Quillen localization sequence}
for $L = \Q.$ Combined with the newly proved Quillen--Lichtenbaum
Conjecture \cite{Voevodsky} and with \cite[Theorem 5.10]{Ba3}, Theorem 4, p. 299 in \cite{Ba2} can be restated as
$$[K_{2n}(\CO_L)_l: div(K_{2n}(L)_l)]= \frac{\prod_{v|l}w_{n}(L_v)_l}{w_{n}(L)_l},$$
for all number fields $L$ and all $n\geq 1$, where $v$ runs over all the $l$--adic primes of $L$ and $L_v$ is the $v$--adic completion of $L$. Thus, for $L=\Bbb Q$ and all $n\geq 1$ we have
$$K_{2n}(\Z)_l=div(K_{2n}(\Q)_l).$$

Let $\omega: G(\Q(\mu_l)/\Q)\to \Z_l^\times$ denote the  Teichmuller character.
Using divisible elements (see \cite{BG1} and \cite{BG2}), one of Kurihara's results in \cite{Ku} can be restated as
follows:
\begin{equation}\label{Kurihara} div(K_{2n}(\Bbb Q)_l)\text { is cyclic} \,\,\,
\Longleftrightarrow \,\,\, Cl(\Bbb Q(\mu_l))_l^{\, \omega^{-n}} \text{ is cyclic},\end{equation}
for all odd $n\geq 1$ and
\begin{equation}\label{Kurihara2} div(K_{2n}(\Bbb Q)_l) \, = \, 0 \,\,\, \Longleftrightarrow \,\,\,
Cl(\Bbb Q(\mu_l))_l^{\, \omega^{-n}} \, = \, 0,\end{equation}
for all even $n\geq 1.$
The right--hand side of \eqref{Kurihara} is a deep
conjecture of Iwasawa and right--hand side of \eqref{Kurihara2} is
an equaly deep conjecture of Kummer-Vandiver.

From the above remarks it is clear that the study of the groups $div(K_{2n}(L)_l)$ is of central importance for understanding the arithmetic of $L$.
\medskip

In \S5, we use our Stickelberger splitting map to study the $\Z_l[G(F/K)]$--module structure and group structure of the abelian group $div(K_{2n}(F)_l).$ We work in this section under the simplifying hypotheses that  $F(\mu_l)/K$ is ramified at all the $l$--adic primes, $F(\mu_{l^\infty})/F(\mu_l)$ is totally ramified at all these primes and
$l\nmid n\cdot |G(F/K)|$. In this context, we show (see Theorem \ref{divisible elements via Lambda}) that
$$div(K_{2n}(F)_l)^\chi=K_{2n}(O_F)_l^\chi\cap{\rm Im}(\Lambda), $$
for all irreducible $\C_l$--valued characters $\chi$ of $G$, such that $\chi(\Theta_n({\bf b, f}))\ne 0.$ In the particular case $F=K$ and $n\geq 1$ odd, this implies
that
\begin{equation}\label{div in terms of Lambda} div(K_{2n}(K)_l)=K_{2n}(O_K)_l\cap{\rm Im}(\Lambda).\end{equation}
These results show that the divisible elements are in fact special values of our maps $\Lambda$ and can be explicitely constructed,
as explained above, out of special values of our $l$--adic Hecke characters and Bott elements. Further considerations based on \eqref{div in terms of Lambda}
lead us to the proof of the following equivalence (see Theorem \ref{generalize Iwasawa}):
\begin{equation}\label{injectivity} div(K_{2n}(F)_l) \text{ is cyclic } \Longleftrightarrow \Lambda_{v_0} \text{ is injective, }\end{equation}
for a well chosen ideal ${\bf b}$ and $\CO_F$--prime $v_0$, assuming that
$\prod_{v|l}w_{n}(F)_l=1$.

In particular, in \S6 we combine \eqref{injectivity} for $F =\Q$ with our explicit construction of $\Lambda$ to obtain a
new proof of Kurihara's result \eqref{Kurihara} (see Theorem \ref{another look at the Iwasawa conj.}.)  It is hoped that the techniques
developed in \S6 can be extended to other totally real fields and to a generalization of Iwasawa's conjecture in that context.
\bigskip

\noindent
{\bf  Acknowledgments.}
The first author would like to thank the University
of California, San Diego for its hospitality and financial
support during the period December 2010--June 2011. The second author would like
to thank the Banach Center (B{\c e}dlewo, Poland) for hosting him in July 2012.

%-----------Brummer Stark and Stickelberger elements------------------

\noindent
\section{The higher Stickelberger elements}

 Let $F/K$ be a finite, abelian CM or totally real extension of a totally real
number field $K.$ Let ${\bf f}$ be the (finite) conductor of $F / K$ and let ${\bf f'}$
be any nontrivial $\mathcal O_K$--ideal divisible by all the primes dividing ${\bf f}$, i.e. ${\rm Supp}(\bf f)\subseteq {\rm Supp}(\bf f').$

For all ${\bf f'}$ as above and all $\sigma\in G(F/K)$, let $\zeta_{\bf f'}(\sigma, s)$ is the
$\bf f'$--imprimitive partial zeta function associated to
$\sigma\in G(F/K)$ of complex variable $s$. For $\Re(s)>1$, this is defined by the absolutely and compact-uniformly convergent series
$$\zeta_{\bf f'}(\sigma, s);=\sum_{\bf a} N{\bf a}^{-s},$$
where the sum is taken over all the ideals ${\bf a}$ of $\mathcal O_K$ which are coprime to ${\bf f}'$ and
such that $\sigma_{\bf a}=\sigma$. It is well known that $\zeta_{\bf f'}(\sigma, s)$ has a unique meromorphic continuation
to the entire complex plane $\Bbb C$ and that it is holomorphic away from $s=1$.

\begin{definition}[Coates, \cite{C}]
For all $n\in\Bbb Z_{\geq 0}$, all ${\bf f'}$ as above, and all $\mathcal O_K$--ideals $\bf b$ coprime to $\bf f'$,
the Stickelberger elements
$\Theta_{n} ({\bf b}, {\bf f'}) \in \C [G(F / K)]$ are given by
$$\Theta_{n} ({\bf b}, {\bf f'}):=(1-N{\bf
b}^{1+n}\cdot\sigma_{\bf b}^{-1})\cdot \sum_{\sigma\in G(F / K)}\zeta_{\bf f'}(\sigma,
-n)\cdot\sigma^{-1}.$$
\end{definition}
\begin{remark}\label{support} Note that $\zeta_{\bf f'}(\sigma, s)$ and consequently the elements $\Theta_{n} ({\bf b}, {\bf f'})$ only depend on
${\rm Supp}(\bf f')$ and not on $\bf f'$ per se.
\end{remark}
A deep theorem of Siegel (see \cite{Siegel}) implies that
$\Theta_{n} ({\bf b}, {\bf f'}) \in \Q [G(F / K)]$, for all $\bf f'$, $\bf b$
and $n$ as above. In \cite{DR}, Deligne and Ribet
proved the following refinement of Siegel's theorem.

\begin{theorem}[Deligne-Ribet, \cite{DR}] Let $\bf f'$, $\bf b$ and $n$ be as above and let
$l$ be a prime number. Then, we have
$$\Theta_{n} ({\bf b}, {\bf f'}) \in \Z_l[G(F / K)],$$ as
long as $\bf b$ is coprime to $w_{n+1}(F)_l$.
\end{theorem}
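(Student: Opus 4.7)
The plan is to deduce this from the theory of $p$--adic $L$--functions for totally real fields, following the Deligne--Ribet strategy via Hilbert modular forms. Since Siegel's theorem already gives $\Theta_n(\mathbf{b}, \mathbf{f'}) \in \Q[G(F/K)]$, the task reduces, for each prime $l$ coprime to ${\bf b}\cdot w_{n+1}(F)_l^{-1}$ in the obvious sense, to showing that every coefficient of $\Theta_n(\mathbf{b}, \mathbf{f'})$ lies in $\Z_l$. Equivalently, it suffices to show that for every $\sigma \in G(F/K)$,
$$
(1 - N\mathbf{b}^{1+n}\sigma_{\mathbf{b}}^{-1}) \cdot \zeta_{\mathbf{f'}}(\sigma,-n) \in \Z_l,
$$
after pairing with the idempotent picking out $\sigma$.

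First, I would realize the partial zeta values $\zeta_{\mathbf{f'}}(\sigma,-n)$ as constant terms, at appropriate cusps of the Hilbert modular variety for $K$, of certain Eisenstein series $E_{\mathbf{f'}, n, \sigma}$ of parallel weight $1+n$. The linear combination giving $\Theta_n(\mathbf{b},\mathbf{f'})$ assembles into a single Hilbert modular form $G_{\mathbf{b},\mathbf{f'},n}$, whose constant terms record the Stickelberger coefficients and whose non-constant Fourier coefficients are essentially sums of norms of ideals times roots of unity, hence manifestly $l$--integral.

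Next, the smoothing factor $(1 - N\mathbf{b}^{1+n}\sigma_{\mathbf{b}}^{-1})$ is the crux: under the hypothesis $\gcd(\mathbf{b}, w_{n+1}(F)_l)=1$, one has $N{\bf b}^{1+n}\not\equiv 1$ modulo the relevant powers of $l$ at the appropriate characters, which kills the potential $l$--adic poles arising from the constant term. I would then apply the $q$--expansion principle for Hilbert modular forms over $K$ (the integral model of Rapoport-Deligne-Ribet): a Hilbert modular form whose non-constant Fourier coefficients at every cusp lie in $\Z_l$ has its constant terms in $\Z_l$ as well. Combined with the integrality of the non-constant coefficients of $G_{\mathbf{b},\mathbf{f'},n}$, this yields $\Theta_n(\mathbf{b},\mathbf{f'}) \in \Z_l[G(F/K)]$.

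The hard part will be the $q$--expansion principle in this integral, totally real setting: it requires a careful analysis of the toroidal compactification of the Hilbert modular variety at all cusps, and control of the level structure so that the Fourier coefficients at non-cuspidal cusps are also $l$--integral (one may need to replace ${\bf f'}$ by an auxiliary divisor to separate cusps without affecting ${\rm Supp}({\bf f'})$, using Remark \ref{support}). An alternative route, which avoids Hilbert modular forms but replaces one piece of machinery with another, is the Shintani cone decomposition combined with Cassou-Nogu\`es--Barsky $p$--adic integration: one writes $\zeta_{\mathbf{f'}}(\sigma,-n)$ as a finite $\Q$--linear combination of values of $p$--adic integrals of polynomials against Bernoulli-type measures on $\CO_{K,p}^\times$, and the smoothing operator $(1 - N{\bf b}^{1+n}\sigma_{\bf b}^{-1})$ turns these rational values into honest $\Z_l$--valued moments. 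Either way, the essential input is a $q$--expansion/measure-theoretic integrality statement which is not easy to prove by purely zeta-function manipulations.
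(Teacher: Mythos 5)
The paper does not prove this theorem; it is stated as a black-box citation to Deligne--Ribet \cite{DR}, so there is no in-paper proof to compare against. Your sketch is a fair high-level summary of the actual Deligne--Ribet argument: partial zeta values as constant terms of Hilbert--Eisenstein series, the smoothing factor $(1 - N{\bf b}^{1+n}\sigma_{\bf b}^{-1})$ removing the $l$-adic denominator coming from the constant term (which is exactly where the hypothesis on ${\bf b}$ and $w_{n+1}(F)_l$ enters), and the $q$-expansion principle over the Rapoport--Deligne--Ribet integral model to propagate integrality from non-constant to constant Fourier coefficients. The Cassou-Nogu\`es/Barsky measure-theoretic alternative you mention is also a genuine route in the literature. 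One small correction: the phrase ``prime $l$ coprime to ${\bf b}\cdot w_{n+1}(F)_l^{-1}$'' does not parse; the correct hypothesis is simply that the $\CO_K$-ideal $\bf b$ is coprime to the integer $w_{n+1}(F)_l$ (a power of $l$), which, when $w_{n+1}(F)_l>1$, forces $N{\bf b}$ to be an $l$-adic unit and is what makes the Euler-type factor effective in eliminating the pole. You have correctly identified that the genuine content lies in the $q$-expansion (or $p$-adic-measure) integrality statement, and that this cannot be obtained from formal zeta-function manipulations alone; a complete proof would have to fill in that machinery, but as an outline of the cited result this is accurate.
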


\noindent
Consequently, if ${\bf b}$ and ${\bf f'}$ are as above and ${\bf b}$ is coprime to $w_F$, then we have
$$\Theta_0({\bf b}, {\bf f'})\in\Z[G(F/K)].$$

A fundamental congruence relation between $\Theta_0({\bf b}, {\bf f'})$ and $\Theta_n({\bf b}, {\bf f'})$, for arbitrary $n\geq 1$,
is proved in \cite{DR}. In order to state it, let us first note that for every $n\geq 1$ the character $\omega_{K,l}^n$ modulo $w_n(F)_l$
factors through $G(F/K)$. Consequently, we obtain a group morphism
$$\omega_{K,l}^{(n)} :G(F/K)\to (\zl/w_n(F)_l)^\times.$$
This extends to a unique $\zl/w_n(F)_l$--algebra isomorphism
$$t_{n}:(\zl/w_n(F)_l)[G(F/K)]\simeq(\zl/w_n(F)_l)[G(F/K)],$$ which sends  $\sigma\to\omega_{K,l}^{(n)}(\sigma)^{-1}\cdot\sigma,$
for all $\sigma\in G(F/K)$.
\begin{theorem}[the Deligne--Ribet congruences, \cite{DR}]\label{Deligne-Ribet-congruences} For all $\bf f'$, $\bf b$, $l$ and $n\geq 1$ as above, if
$\bf b$ is coprime to $w_{n+1}(F)_l$, then we have
$$\widehat{\Theta_n({\bf b}, {\bf f'})}=t_n(\widehat{\Theta_0({\bf b}, {\bf f'})}),$$
where $\widehat x$ is the class of $x$ modulo $w_n(F)_l$, for all $x\in\zl[G(F/K)]$.
\end{theorem}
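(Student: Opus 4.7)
The plan is to compute $t_n(\widehat{\Theta_0({\bf b},{\bf f'})})$ directly from the definitions and thereby reduce the claim to the classical Deligne--Ribet congruences between partial zeta values at $s=0$ and $s=-n$.

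First, I would apply the $(\zl/w_n(F)_l)$--algebra isomorphism $t_n$ to the formula defining $\Theta_0({\bf b},{\bf f'})$. Since $t_n$ sends $\sigma^{-1}$ to $\omega_{K,l}^{(n)}(\sigma)\cdot\sigma^{-1}$, this yields, in $(\zl/w_n(F)_l)[G(F/K)]$,
$$t_n\bigl(\widehat{\Theta_0({\bf b},{\bf f'})}\bigr)=\bigl(1-N{\bf b}\,\omega_{K,l}^{(n)}(\sigma_{\bf b})\,\sigma_{\bf b}^{-1}\bigr)\sum_{\sigma\in G(F/K)}\omega_{K,l}^{(n)}(\sigma)\,\zeta_{\bf f'}(\sigma,0)\,\sigma^{-1}.$$
The hypothesis that ${\bf b}$ is coprime to $w_{n+1}(F)_l$ forces ${\bf b}$ to be coprime to $l$ (the case $w_n(F)_l=1$ being vacuous), so $\sigma_{\bf b}$ acts on $l$-power roots of unity as the $N{\bf b}$-th power; hence $\omega_{K,l}^{(n)}(\sigma_{\bf b})\equiv N{\bf b}^n\pmod{w_n(F)_l}$. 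The Euler factor then becomes $(1-N{\bf b}^{n+1}\sigma_{\bf b}^{-1})$, matching that of $\Theta_n({\bf b},{\bf f'})$ exactly.

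Subtracting, the desired congruence reduces to the identity
$$(1-N{\bf b}^{n+1}\sigma_{\bf b}^{-1})\cdot\sum_{\sigma}\bigl(\zeta_{\bf f'}(\sigma,-n)-\omega_{K,l}^{(n)}(\sigma)\,\zeta_{\bf f'}(\sigma,0)\bigr)\,\sigma^{-1}\equiv 0\pmod{w_n(F)_l}$$
in $\zl[G(F/K)]/w_n(F)_l$. This is precisely the form of the classical Deligne--Ribet congruence proved in \cite{DR} via the $q$-expansion principle for Hilbert modular Eisenstein series of parallel weights $1$ and $n+1$; the helper prime ${\bf b}$ serves its usual role of clearing denominators in the individual twisted partial zeta values and aligning them coefficientwise modulo $w_n(F)_l$.

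The main obstacle is this final step, which is the depth of the Deligne--Ribet theorem itself; the preceding two paragraphs constitute only a formal bookkeeping reduction. For this paper we simply invoke \cite{DR} as a black box, as the statement of Theorem \ref{Deligne-Ribet-congruences} is essentially a group-ring repackaging of the classical congruences already established there.
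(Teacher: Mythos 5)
The paper does not actually prove Theorem \ref{Deligne-Ribet-congruences}; it is stated as a cited result from Deligne--Ribet \cite{DR} with no argument supplied. So there is no paper proof to compare against, and the question becomes simply whether your bookkeeping reduction from the group-ring formulation to the classical congruences is sound. It essentially is: applying $t_n$ termwise converts $\sigma^{-1}$ to $\omega_{K,l}^{(n)}(\sigma)\,\sigma^{-1}$, and for ${\bf b}$ coprime to $l$ the standard evaluation of the cyclotomic character on Frobenius gives $\omega_{K,l}^{(n)}(\sigma_{\bf b})\equiv N{\bf b}^{n}$, so the transformed Euler factor becomes $(1-N{\bf b}^{n+1}\sigma_{\bf b}^{-1})$ as you claim, reducing the statement to the coefficientwise Deligne--Ribet congruences.

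Two small cautions. First, your parenthetical that coprimality to $w_{n+1}(F)_l$ ``forces ${\bf b}$ coprime to $l$'' is not quite right: $w_{n+1}(F)_l$ and $w_n(F)_l$ can differ in whether they are divisible by $l$ (for instance in the cyclotomic setting the divisibility conditions $(l-1)\mid n$ and $(l-1)\mid n+1$ are mutually exclusive for $l>2$), so ${\bf b}$ coprime to $w_{n+1}(F)_l$ does not by itself rule out $l\mid N{\bf b}$. What actually saves the argument is that in the setting used here ${\bf b}$ is taken coprime to ${\bf f'}$, and in all the paper's later applications ${\bf f'}$ is divisible by the $l$-adic primes; alternatively, when $w_n(F)_l=1$ the congruence is indeed vacuous as you note. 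You should phrase the sufficient condition more carefully rather than claim it is forced by the hypothesis as stated. Second, the individual twisted partial zeta values $\zeta_{\bf f'}(\sigma,-n)$ are rational, not $l$-integral in general, so the subtraction in your displayed congruence should be read after multiplying through by the common Euler factor, which is exactly what brings both sides into $\Z_l[G(F/K)]$; you gesture at this, but it is worth making explicit since the congruence in \cite{DR} is not a coefficientwise statement about the bare partial zeta values. With these caveats, invoking \cite{DR} as a black box for the substantive content matches the paper's own practice of stating the result as an imported theorem.
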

\begin{remark}\label{L-function-theta}
It is easily seen that for all ${\bf b}$ and ${\bf f'}$ as above we have an equality
$$\Theta_{n} ({\bf b}, {\bf f'}):=(1-N{\bf
b}^{1+n}\cdot\sigma_{\bf b}^{-1})\cdot \sum_{\chi\in\widehat{G(F / K)}}L_{\bf f'}(\chi,
-n)\cdot e_{\overline\chi},$$ where $L_{\bf f'}(\chi, s)$ is the
$\bf f'$--imprimitive $L$--function associated to
the complex, irreducible character $\chi$ of $G(F/K)$ and $e_{\chi}$ is the usual idempotent
associated to $\chi$ in the group algebra $\C[G(F/K)]$.

As usual, if $F$ is a CM field we will call a character $\chi$ of $G(F/K)$ even if $\chi(j)=1$
and odd if $\chi(j)=-1$, where $j$ is the unique complex conjugation automorphism of $F$ (contained in $G(F/K)$.)
If $F$ is totally real, then all characters $\chi$ of $G(F/K)$ are called even. A well known consequence of the functional equation
for $L_{\bf f'}(\chi, s)$ is that its order of vanishing at $s=0$ is given by
the following formula
$${\rm ord}_{s=0}L_{\bf f'}(\chi, s)=\left\{
                                       \begin{array}{ll}
                                         {\rm card\,}\{w\in {\rm Supp}({\bf f'})\cup S_\infty(F)\mid \chi(G_w)=1\}, & \hbox{if $\chi\ne\mathbf 1$;} \\
                                         {\rm card\,\,}{\rm Supp}({\bf f'})+{\rm card\,}S_\infty(F)-1, & \hbox{\hbox{if $\chi = \mathbf 1$,}}
                                       \end{array}
                                     \right.$$
where $\mathbf 1$ is the trivial character of $G(F/K)$. The formula above shows that if $\chi$ is an even character, then
$L_{\bf f'}(\chi, 0)=0$. This implies that for all ${\bf b}$ and ${\bf f'}$ as above, if $F$ is a CM field we have
\begin{equation}\label{odd-theta}
\Theta_0({\bf b}, {\bf f'})\in (1-j)\cdot\Z[G(F/K)].
\end{equation}
\end{remark}
\medskip
For a more detailed discussion of the higher Stickelberger elements and
their basic properties, the reader can consult for example \cite{BP}, Section 2.

\section{The Brumer--Stark elements and associated Hecke characters}

In the 1970s, Brumer formulated a conjecture which generalizes to a great extent Stickelberger's
classical theorem. This conjecture was rediscovered in a much more precise form by Stark in \cite{Stark},
as a particular case of what we now call the refined, order of vanishing $1$, abelian Stark conjecture. For a very
lucid presentation of the Brumer-Stark conjecture, the reader is strongly advised to consult
Chpt. IV, \S6 of \cite{Tate}. For a more modern presentation, we refer the reader to \S4.3 of \cite{Popescu-PCMI}, where
the second author reformulates the Brumer-Stark conjecture in terms of the annihilation of certain generalized Arakelov class--groups.

\subsection{The global theory.} In what follows, we remind the reader the formulation of the Brumer--Stark conjecture stated in \cite{Popescu-PCMI} in the (slightly more restrictive) context relevant for our current purposes
and use it to derive some useful consequences on the existence of certain so--called Brumer--Stark elements and Hecke characters.
\medskip

Let $F/K$ and ${\bf f'}$ be as in the previous section. Throughout this section $F$ is assumed to be a CM number field. Let $S_\infty$ the set of infinite (archimedean) primes in $F$.
Let $S_{\bf f'}$ be the union of $S_\infty$ with the set consisting of all the primes in $F$ dividing ${\bf f'}$. We consider
the usual Arakelov divisor group associated to $F$:
$${\rm Div}_{S_\infty}(F):=\left (\bigoplus_{w\not\in S_\infty}\Z\cdot w\right )\bigoplus\left(\bigoplus_{w\in S_{\infty}}\R\cdot w\right ), $$
where the sum is taken over primes $w$ in $F$. Further, we define a degree map
$${\rm deg}_{F}:  {\rm Div}_{S_\infty}(F)\to \R$$
to be the unique map which is $\Z$--linear on the first direct summand above and $\R$--linear on the second
and which also satisfies the equalities
$${\rm deg}_{F}(w)=\left\{
                            \begin{array}{ll}
                              1, & \hbox{if $w\in S_\infty$;} \\
                              \log |{\rm N}w|, & \hbox{if $w\not\in S_\infty$.}
                            \end{array}
                          \right.$$
Above, as usual, we let ${\rm N}w:={\rm card}(\mathcal O_F/w)$. We let ${\rm Div}_{S_\infty}^0(F)$
denote the kernel of the group morphism ${\rm deg}_{F}$. The product formula (for the canonically normalized metrics of $F$) permits us to define a
divisor map (which is a group morphism) by
$${\rm div}_{F}: F^\times \to {\rm Div}_{S_\infty}^0(F),
\quad x\to \sum_{w\not\in S_\infty}{\rm ord}_w(x)\cdot w + \sum_{w\in S_\infty}(-\log|x|_w)\cdot w\,,$$
where ${\rm ord}_w(\cdot)$ and $|\cdot|_w$ denote the canonically
normalized valuation and metric associated to $w$, respectively. The Arakelov class group (first Chow group)
associated to $F$ is defined as follows.
$${\rm CH}^1(F)^0:=\frac{{\rm Div}_{S_\infty}^0(F)}{{\rm div}_F(F^\times)}.$$

Next, following \cite{Popescu-PCMI}, we define a generalized version of the above constructions. For that purpose, let
$\frak b$ be a prime ideal in $K$, coprime to ${\bf f}\cdot w_F$. Let $T_{\frak b}$ be the set
of primes in $F$ which sit above $\frak b$. Further, we let $F_{\frak b}^\times$ denote the subgroup of $F^\times$ consisting of those elements which are congruent to $1$ modulo
$\frak b$. We consider the following subgroup of ${\rm Div}_{S_\infty}(F)$:
$${\rm Div}_{S_\infty, T_{\frak b}}(F):=\left (\bigoplus_{w\not\in S_\infty\cup T_{\frak b}}\Z\cdot w\right )\bigoplus\left(\bigoplus_{w\in S_{\infty}}\R\cdot w\right )$$
and let ${\rm Div}_{S_\infty, T_{\frak b}}^0(F):={\rm Div}_{S_\infty, T_{\frak b}}(F)\cap {\rm Div}^0_{S_\infty}(F).$ Obviously, we have ${\rm div}_F(F_{\frak b}^\times)\subseteq {\rm Div}_{S_\infty, T_{\frak b}}^0(F).$
We define the following generalized Arakelov class group
$${\rm CH}^1(F)_{T_{\frak b}}^0:=\frac{{\rm Div}_{S_\infty, T_{\frak b}}^0(F)}{{\rm div}_F(F_{\frak b}^\times)}.$$
For a detailed description of the structure of these (classical or generalized) Arakelov class groups, their links to ideal
class groups and special values of zeta functions, the reader may consults \cite{Popescu-PCMI}, Section 4.3. However, the reader
should be aware at this point that these Arakelov class groups are by no means finite: if endowed with the obvious
topology, their connected component at the origin is compact of volume equal to a certain non-zero (generalized) Dirichlet regulator
and their group of connected components is isomorphic to a certain (ray) class group.

\begin{remark}\label{injective-div} Obviously, we have $\ker ({\rm div}_F) = \mu_F$, so the map ${\rm div}_F$ factors through $F^\times/\mu_F$. In what follows,
we abuse notation and use ${\rm div}_F$ to denote the factor map as well.
However, for a prime ideal $\frak b$ coprime to $w_F$, we have $F_{\frak b}^\times \cap\mu_F=\{1\}$. This makes
the divisor morphism ${\rm div}_{F}$ {\it injective when restricted to $F_{\frak b}^\times$.}

Finally, let us observe that the groups $F^\times$, $F_{\mathfrak b}^\times$,
${\rm Div}_{S_\infty}^0(F)$ and ${\rm Div}_{S_\infty, T_{\frak b}}^0(F)$  are endowed with natural $\Z[G(F/K)]$--module structures and that the map ${\rm div}_F$ is $G(F/K)$--equivariant.
Consequently, ${\rm CH}^1(F)^0$
and ${\rm CH}^1(F)_{T_{\frak b}}^0$ are endowed with natural $\Z[G(F/K)]$--module structures.
\end{remark}

As we prove in \cite{Popescu-PCMI} (see Proposition 4.3.5(1)), the classical Brumer-Stark conjecture for the set of data
$(F/K, S_{\bf f'})$ is equivalent to the following statement.

\begin{conjecture}[${\rm BrSt}(F/K, S_{\bf f'})$, Brumer-Stark]\label{Brumer-Stark} For all prime ideals ${\frak b}$ in $K$ which are coprime to ${\bf f'}\cdot w_F$, we have
$$\Theta_{0}(\frak b, {\bf f'})\in {\rm Ann}_{\Z[G(F/K)]} {\rm CH}^1(F)_{T_{\frak b}}^0.$$

\end{conjecture}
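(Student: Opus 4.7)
The plan is to establish the statement by proving its equivalence with the classical Brumer--Stark conjecture as formulated in Chpt.\ IV, \S6 of \cite{Tate}, thereby reducing the problem to a deep but well--studied (and still open) question. This equivalence is precisely \cite{Popescu-PCMI}, Prop.\ 4.3.5(1), so in practice I would reprove that proposition in the slightly more restrictive setting of this section, and then invoke known cases of the classical statement.

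First I would unpack the annihilation statement. An element of ${\rm CH}^1(F)_{T_{\frak b}}^0$ is represented by a degree--zero Arakelov divisor $D = \sum_{w \notin S_\infty \cup T_{\frak b}} n_w \, w + \sum_{v \in S_\infty} r_v \, v$, and the condition $\Theta_0({\frak b}, {\bf f'}) \cdot [D] = 0$ in the class group amounts to the existence of $\alpha \in F_{\frak b}^\times$ with ${\rm div}_F(\alpha) = \Theta_0({\frak b}, {\bf f'})\cdot D$. By $\Z[G(F/K)]$--linearity of the divisor map (Remark \ref{injective-div}), it suffices to verify this on a convenient generating family: one may take divisors $D_w := w - (\log Nw)\cdot v_0$ for finite $w \notin T_{\frak b}$ (with $v_0 \in S_\infty$ a fixed auxiliary archimedean place), together with $v - v_0$ for $v \in S_\infty$.

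Next I would split the equation ${\rm div}_F(\alpha) = \Theta_0({\frak b}, {\bf f'})\cdot D_w$ into its finite and archimedean components. The finite part gives the ideal identity $(\alpha) = w^{\Theta_0({\frak b}, {\bf f'})}$, i.e.\ the classical statement that $\Theta_0$ annihilates the ideal class group of $F$. The archimedean part furnishes formulae for $-\log|\alpha|_v$, $v \in S_\infty$, in terms of the partial zeta values $\zeta_{{\bf f'}}(\sigma,0)$ encoded in $\Theta_0$; since $F$ is CM and $\Theta_0 \in (1-j)\Z[G(F/K)]$ by \eqref{odd-theta}, matching both sides reproduces Stark's absolute--value condition, equivalent up to a root of unity to the classical ``anti--unit'' identity relating $\alpha$ and $\bar\alpha$. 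Finally, the requirement $\alpha \in F_{\frak b}^\times$, combined with the coprimality of $\frak b$ with $w_F\cdot\bf f'$, is precisely the ingredient that forces the Kummer extension $F(\alpha^{1/w_F})/K$ to be abelian via reciprocity; this is the core content of Tate's reformulation and the principal point where the smoothing Euler factor $(1 - N{\frak b}\,\sigma_{\frak b}^{-1})$ built into $\Theta_0({\frak b},{\bf f'})$ is indispensable.

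The main obstacle is of course that the classical Brumer--Stark conjecture itself is open in full generality; the equivalence sketched above only repackages the statement without proving it. All substantive progress is $l$--adic in nature, and in the present paper this will be bypassed by appealing, in the next subsection, to the $l$--adic, imprimitive version of Brumer--Stark established by Greither--Popescu \cite{GP} under the vanishing of the cyclotomic Iwasawa $\mu$--invariant at $l$. A fully unconditional proof would require breakthroughs on Iwasawa main conjectures for arbitrary totally real base fields, well beyond the scope of what the techniques here can deliver.
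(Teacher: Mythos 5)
You have correctly recognized that the statement is a \emph{conjecture}, not a theorem, and the paper itself offers no proof of it: it merely records (citing \cite{Popescu-PCMI}, Prop.\ 4.3.5(1)) that this Arakelov formulation is equivalent to the classical Brumer--Stark conjecture of Tate \cite{Tate} and then proceeds to \emph{assume} it where needed, falling back on the unconditional $l$--adic imprimitive substitute of Greither--Popescu \cite{GP} in \S3.2. Your sketch of the equivalence --- splitting ${\rm div}_F(\alpha)=\Theta_0({\frak b},{\bf f'})\cdot D$ into finite and archimedean parts to recover the class--group annihilation, Stark's absolute--value condition, and the abelianness of $F(\alpha^{1/w_F})/K$ via the congruence $\alpha\equiv 1 \bmod {\frak b}$ --- matches the content of the cited proposition, so your treatment is consistent with the paper's.
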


\begin{remark} In fact, it is sufficient to prove the conjecture above for all but finitely
many prime ideals $\frak b$. Moreover, it is sufficient to prove the statement above for any
(finite) set $\mathcal T$ of prime ideals $\frak b$ which are coprime to ${\bf f'}\cdot w_F$ and
such that the set $\{1-{\rm N}\frak b\cdot\sigma_{\frak b}^{-1}\mid \frak b\in \mathcal T\}$ generates
the $\Z[G(F/K)]$--ideal ${\rm Ann}_ {\Z[G(F/K)]}(\mu_F)$. Also, it is important to note that if one proves
the statement above for a given ${\bf f'}$, then it also follows for any ${\bf f''}$, such that ${\rm Supp}({\bf f'})\subseteq{\rm Supp}({\bf f''})$.
Indeed, this is an immediate consequence of the obvious equality
$$\Theta_{0}(\frak b, {\bf f''})=\prod_{w}(1-\sigma_w^{-1})\,\cdot\, \Theta_{0}(\frak b, {\bf f'}),$$
where the product runs over all the primes $w$ in $K$ dividing ${\bf f''}$ but not dividing ${\bf f'}$. All these
facts are proved in \cite{Popescu-PCMI}, Section 4.3.
\end{remark}

\begin{remark}\label{annihilation-prime} Let us note that if ${\rm BrSt}(F/K, S_{\bf f'})$ holds for a prime $\frak b$, then
$$\Theta_{0}(\frak b, {\bf f'})\in {\rm Ann}_{\Z[G(F/K)]}  Cl(\mathcal O_F)_{\frak b}\subseteq {\rm Ann}_{\Z[G(F/K)]}  Cl(\mathcal O_F),$$
where $Cl(\mathcal O_F)_{\frak b}$ is the ray--class group of conductor ${\frak b}\CO_F$ associated to $F$.
Indeed, this is a direct consequence of the existence of (commuting) natural $\Z[G(F/K)]$--linear surjections
$$\xymatrix{
{\rm CH}^1(F)_{T_{\frak b}}^0 \ar@{>>}[r]\ar@{>>}[d] & Cl(\mathcal O_F)_{\frak b} \ar@{>>}[d] \\
{\rm CH}^1(F)^0\ar@{>>}[r] &Cl(\mathcal O_F),}$$
explicitly constructed in \cite{Popescu-PCMI}, Section 4.3.

However, for our current purposes we need
a more explicit proof and analysis of this consequence of the Brumer--Stark conjecture.
So, let us take an ideal class ${\widehat{\bf c}}$ in $Cl(\mathcal O_F)_{\frak b}$ associated to a fractional $O_F$--ideal
${\bf c}$ which is coprime to ${\mathfrak b}$.  In the obvious
manner, we can associate to ${\bf c}$ a divisor $\widetilde{\bf c}$ in ${\rm Div}_{S_\infty, T_{\frak b}}(F)$.
Now, let us pick an infinite (archimedean) prime $\infty$ of $F$ and note that
$(\widetilde{\bf c}-{\rm deg}_F({\widetilde{\bf c}})\cdot\infty)\in {\rm Div}^0_{S_\infty, T_{\frak b}}(F)$.
Consequently, by our assumption, there is a unique $\lambda_{\frak b, {\bf f}'}(\bf c)\in F_{\frak b}^\times$, such that
$$
\Theta_{0}(\frak b, {\bf f'})\cdot (\widetilde{\bf c}-{\rm deg}_F({\widetilde{\bf c}})\cdot\infty)={\rm div}_F(\lambda_{\frak b, \bf f'}(\bf c)).$$
The uniqueness of $\lambda_{\frak b, \bf f'}(\bf c)$ follows from the injectivity of ${\rm div}_F$ when restricted to $F_{\frak b}^\times$. (See the Remark \ref{injective-div}.)
Note that due to \eqref{odd-theta} and to the obvious fact that $(1-j)\cdot\infty=0$ in ${\rm Div}^0_{S_\infty, T_{\frak b}}(F)$, the last
equality can be rewritten as
\begin{equation}\label{divisors-prime}
\Theta_{0}(\frak b, {\bf f'})\cdot \widetilde{\bf c}={\rm div}_F(\lambda_{\frak b, \bf f'}(\bf c)).
\end{equation}
This shows that the element $\lambda_{\frak b, f'}(\bf c)$ does not depend on the choice of the infinite prime $\infty$.
Moreover, it shows that ${\bf c}^{\Theta_{0}(\frak b, {\bf f'})}$ is a principal $O_F$--ideal generated by
$\lambda_{\frak b, {\bf f'}}(\bf c)$, which proves the annihilation consequence claimed above.
\end{remark}

\begin{remark}\label{annihilation-any} If ${\rm BrSt}(F/K, S_{\bf f'})$ holds, then
$$\Theta_{0}({\bf b}, {\bf f'})\in {\rm Ann}_{\Z[G(F/K)]}{\rm CH}^1(F)^0\subseteq {\rm Ann}_{\Z[G(F/K)]}  Cl(\mathcal O_F),$$
for all proper $O_K$--ideals $\bf b$ coprime to ${\bf f'}\cdot w_F$. Indeed, for a fixed ideal ${\bf b}$, this is a direct
consequence of the previous remark applied to $\Theta_0(\frak p, {\bf f'})$, for all primes $\frak p\mid{\bf b}$ and the obvious equality
\begin{equation}\label{theta-product}\Theta_0({\bf ad, f'})={\rm N}{\bf d}\cdot\sigma_{\bf d}^{-1}\cdot\Theta_0({\bf a, f'}) + \Theta_0({\bf d, f'}),
\end{equation}
which holds for all proper $O_K$--ideals ${\bf a}$ and ${\bf d}$ coprime to ${\bf f'}\cdot w_F$. Moreover, by combining equalities \eqref{theta-product} and
\eqref{divisors-prime} above, one can easily show that if ${\bf b}$ is a proper $\CO_K$--ideal, coprime to ${\bf f'}\cdot w_F$ and ${\bf c}$ is any fractional $\CO_F$--ideal, then
there exists a unique element ${\wlambda}_{{\bf b, f'}}({\bf c})\in F^\times/\mu_F$, such that
\begin{equation}\label{divisor-general}{\rm div}_F(\wlambda_{{\bf b, f'}}({\bf c}))=\Theta_0({\bf b, f'})\cdot\widetilde{\bf c}.\end{equation}
Indeed, if ${\bf c}$ is coprime to ${\bf b}$, this is obvious. For an arbitrary ${\bf c}$,
we pick a fractional $\CO_F$--ideal ${\bf d}$ coprime to ${\bf b}$ and an $\widehat\varepsilon\in F^\times/\mu_F$, such that we have an
equality
$$\widetilde{\bf c}=\widetilde{\bf d} +{\rm div}_F(\widehat\varepsilon)+ D_\infty,$$
in ${\rm Div}_{S_\infty}(F)$, where $D_\infty\in\bigoplus_{w\in S_\infty}\R\cdot w.$ We hit the above equality with $\Theta_0({\bf b, f'})$ to obtain
$$\Theta_0{(\bf b, f'})\cdot\widetilde{\bf c}={\rm div}_F(\wlambda_{\bf b, f'}({\bf d})\cdot \widehat{\varepsilon}^{\Theta_0({\bf b, f'})}).$$
Therefore, we can set $\wlambda_{\bf b, f'}({\bf c}):=\wlambda_{\bf b, f'}({\bf d})\cdot \widehat{\varepsilon}^{\Theta_0({\bf b, f'})}.$

The reader
should notice  that if {\it ${\bf b}$ is not prime,} then, in general, it is not true that $\Theta_{0}({\bf b}, {\bf f'})$ annihilates
the ray--class group $Cl(\mathcal O_F)_{\bf b}$  of conductor ${\bf b}\CO_F$ (i.e. the elements $\wlambda_{{\bf b, f'}}({\bf c})$ cannot be chosen
to be classes in $F^\times/\mu_F$ of elements in $F^\times$ which are congruent to  $1\mod {\bf b}\CO_F$ even if ${\bf c}$ is coprime to ${\bf b}$, in general.)

\end{remark}

\begin{definition}
Let $F/K$ and ${\bf f'}$ be as above. Assume that ${\rm BrSt}(F/K, S_{\bf f'})$ holds. Consider a pair $({\bf b, c})$ consisting of
a proper $\CO_K$--ideal ${\bf b}$ which is coprime to ${\bf f'}\cdot w_F$ and a fractional $\CO_F$--ideal ${\bf c}$.
Then, the unique element $\wlambda_{\bf b, f'}({\bf c})\in F^\times/\mu_F$ satisfying \eqref{divisor-general} is
called {\it the Brumer--Stark element} associated to $(F/K, {\bf f', b, c})$.
\end{definition}

\begin{lemma}\label{BS-elements-lemma} Under the assumptions and with the notations of the above definition, the following hold.
\begin{enumerate}
\item[(i)] For all pairs $({\bf b, c})$ as above, we have an equality in $F^\times$
$$\lambda^{(1+j)}=1,$$
for any $\lambda$ in $F^\times$ whose class in $F^\times/\mu_F$ is $\wlambda_{\bf b, f'}({\bf c}).$
\item[(ii)] For all pairs $({\bf a, c})$ and $({\bf b, c})$ as above, we have an equality in $F^\times/\mu_F$
$$\wlambda_{\bf ab, f'}({\bf c})=\wlambda_{\bf a, f'}({\bf c})^{{\rm N}{\bf b}\cdot\sigma_{\bf b}^{-1}}\cdot \wlambda_{\bf b, f'}({\bf c}).$$

\end{enumerate}
\end{lemma}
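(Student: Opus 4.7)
The overall strategy for both parts is to apply the divisor map $\mathrm{div}_F$ to the identities we wish to establish and then invoke the fact that $\ker(\mathrm{div}_F) = \mu_F$, so that $\mathrm{div}_F$ descends to an injection $F^\times/\mu_F \hookrightarrow \mathrm{Div}_{S_\infty}(F)$. Both $F^\times$ and the divisor groups are $\Z[G(F/K)]$-modules and $\mathrm{div}_F$ is Galois-equivariant, so we may freely apply elements of $\Z[G(F/K)]$ to the defining relation \eqref{divisor-general}.

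For (i), I would apply the operator $1+j$ to the defining equation $\mathrm{div}_F(\lambda) = \Theta_0({\bf b},{\bf f'})\cdot\widetilde{\bf c}$. By \eqref{odd-theta}, $\Theta_0({\bf b},{\bf f'}) \in (1-j)\cdot\Z[G(F/K)]$, and $(1+j)(1-j)=0$, hence the right-hand side vanishes. Therefore $\mathrm{div}_F(\lambda^{1+j})=0$, so $\lambda^{1+j}\in\mu_F$. Note that the statement is independent of the chosen lift $\lambda$, since any $\mu\in\mu_F$ satisfies $\mu\cdot\mu^j = 1$ (being of absolute value $1$ in every complex embedding). It remains to upgrade $\lambda^{1+j}\in\mu_F$ to $\lambda^{1+j}=1$: since $\lambda^{1+j}$ is $j$-fixed it lies in the maximal totally real subfield $F^+$, hence in $\mu_{F^+}=\{\pm 1\}$; and since for any embedding $\sigma:F\hookrightarrow\C$ one has $\sigma(\lambda^{1+j})=\sigma(\lambda)\cdot\overline{\sigma(\lambda)}=|\sigma(\lambda)|^2 > 0$, the element $\lambda^{1+j}$ is totally positive in $F^+$, ruling out $-1$.

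For (ii), I would apply $\mathrm{div}_F$ to the right-hand side of the proposed identity and use $\Z[G(F/K)]$-linearity together with the defining property \eqref{divisor-general} of Brumer--Stark elements:
\[
\mathrm{div}_F\!\bigl(\wlambda_{{\bf a},{\bf f'}}({\bf c})^{N{\bf b}\cdot\sigma_{\bf b}^{-1}}\cdot\wlambda_{{\bf b},{\bf f'}}({\bf c})\bigr)
= \bigl(N{\bf b}\cdot\sigma_{\bf b}^{-1}\cdot\Theta_0({\bf a},{\bf f'}) + \Theta_0({\bf b},{\bf f'})\bigr)\cdot\widetilde{\bf c}.
\]
By \eqref{theta-product}, the coefficient equals $\Theta_0({\bf ab},{\bf f'})$, so the right-hand side is $\mathrm{div}_F(\wlambda_{{\bf ab},{\bf f'}}({\bf c}))$. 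Injectivity of $\mathrm{div}_F$ modulo $\mu_F$ then yields the claimed equality in $F^\times/\mu_F$.

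The one genuinely non-formal step is the upgrade in (i) from an element of $\mu_F$ to the identity; this is where the CM hypothesis on $F$ enters essentially (via the positivity of $|\sigma(\lambda)|^2$ and the absence of real embeddings of $F$). Everything else reduces to carrying the Galois-equivariant linear algebra of \eqref{odd-theta} and \eqref{theta-product} across the divisor map.
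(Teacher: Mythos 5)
Your argument is correct and follows essentially the same route as the paper's: apply $\mathrm{div}_F$ to reduce (i) to $\lambda^{1+j}\in\mu_F$, then observe $\lambda^{1+j}\in\mu_F\cap F^+=\{\pm 1\}$ and is totally positive since $F$ is CM, forcing it to be $1$; and for (ii) apply $\mathrm{div}_F$ together with \eqref{theta-product} and \eqref{divisor-general} and use injectivity of $\mathrm{div}_F$ on $F^\times/\mu_F$. The only difference is that you explicitly record the independence of the chosen lift $\lambda$ (via $\mu^{1+j}=1$ for $\mu\in\mu_F$), which the paper leaves implicit.
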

\begin{proof} In order to prove (i), one combines \eqref{divisor-general} and \eqref{odd-theta}, to conclude that
$${\rm div}_F(\lambda^{(1+j)})=(1+j)\cdot\Theta_0({\rm b, f')}\cdot\widetilde{\bf c}=0.$$
Consequently, $\lambda^{(1+j)}\in\mu_F\cap F^+=\{\pm 1\}$, where $F^+=F^{j=1}$ is the maximal totally real subfield of $F$.
However, since $F$ is CM, $x^{(1+j)}$ is a totally positive element of $F^+$, for all $x\in F^\times$. This implies that $\lambda^{(1+j)}=1$, as stated.

The equality in (ii) is a direct consequence of \eqref{theta-product} and \eqref{divisor-general}.
\end{proof}

The considerations in the last two remarks lead naturally to the following.

\begin{proposition}\label{strong-BS-elements} Fix $F/K$ and ${\bf f'}$ as above. Assume that conjecture ${\rm BrSt}(F/K, S_{\bf f'})$ holds true. Fix a fractional $\CO_F$--ideal ${\bf c}$
and let $\mathcal B_{\bf c, f'}(F/K)$ be the set of all proper $\CO_K$--ideals $\bf b$ which are
coprime to ${\bf cf'}\cdot w_F$.
Then, there exist unique elements $\lambda_{\bf b, f'}({\bf c})$ in $F^\times$, for all ${\bf b}\in\mathcal B_{\bf c, f'}(F/K)$,
such that the following are satisfied.
\begin{enumerate}
\item[(i)] If ${\bf b}\in \mathcal B_{\bf c, f'}(F/K)$, then ${\rm div}_F(\lambda_{\bf b, f'}({\bf c}))=\Theta_{0}({\bf b, f'})\cdot\widetilde{\bf c}.$
\item[(ii)] If ${\bf b}$ is a prime ideal in $\mathcal B_{\bf c, f'}(F/K)$, then $\lambda_{\bf b, f'}({\bf c})\in F_{\bf b}^\times.$
\item[(iii)] If ${\bf a, b}\in \mathcal B_{\bf c, f'}(F/K)$,  then we have
$$\lambda_{\bf ab, f'}({\bf c})=\lambda_{\bf a, f'}({\bf c})^{{\rm N}{\bf b}\cdot\sigma_{\bf b}^{-1}}\cdot \lambda_{\bf b, f'}({\bf c}).$$
\end{enumerate}
Moreover, for all $\sigma\in G(F/K)$ and all ${\bf b}\in \mathcal B_{\bf c, f'}(F/K)(=\mathcal B_{\sigma({\bf c}), {\bf f'}}(F/K))$, we have
$$\lambda_{\bf b, f'}(\sigma({\bf c}))=\sigma(\lambda_{\bf b, f'}({\bf c})).$$
\end{proposition}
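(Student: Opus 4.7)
The plan is to construct the elements $\lambda_{{\bf b},{\bf f}'}({\bf c})$ first for prime $\bf b$ via the Brumer--Stark conjecture, then to extend them multiplicatively using relation (iii), and finally to verify properties (i)--(iii) together with Galois--equivariance. Uniqueness of the family will follow at once from these properties.

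For a prime $\bf b\in\mathcal B_{\bf c,{\bf f}'}(F/K)$, the construction of Remark \ref{annihilation-prime} (see equation \eqref{divisors-prime}) already produces a distinguished element $\lambda_{\bf b,{\bf f}'}({\bf c})\in F_{\bf b}^\times$ satisfying (i); condition (ii) combined with the injectivity of ${\rm div}_F$ on $F_{\bf b}^\times$ recalled in Remark \ref{injective-div} forces it to be unique. For a composite $\bf b\in\mathcal B_{\bf c,{\bf f}'}(F/K)$, pick any ordering ${\bf b}=\frak p_1\cdots\frak p_N$ of the prime factors of $\bf b$ (with multiplicity) and define $\lambda_{\bf b,{\bf f}'}({\bf c})$ by iterating relation (iii) one prime at a time, starting from $\lambda_{\frak p_1,{\bf f}'}({\bf c})$.

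The main obstacle of the proof is to show that this construction is independent of the chosen ordering. A standard reduction to adjacent transpositions brings the question down to the identity
$$\lambda_{\frak p,{\bf f}'}({\bf c})^{N\frak q\,\sigma_{\frak q}^{-1}}\cdot\lambda_{\frak q,{\bf f}'}({\bf c})=\lambda_{\frak q,{\bf f}'}({\bf c})^{N\frak p\,\sigma_{\frak p}^{-1}}\cdot\lambda_{\frak p,{\bf f}'}({\bf c})$$
for two distinct primes $\frak p,\frak q\in\mathcal B_{\bf c,{\bf f}'}(F/K)$ (the equal--prime case is trivial). A short Stickelberger manipulation based on the cocycle identity \eqref{theta-product} shows that both sides have divisor $\Theta_{0}(\frak p\frak q,{\bf f}')\cdot\widetilde{\bf c}$, so their ratio lies in $\ker({\rm div}_F)=\mu_F$. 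To see that this root of unity is trivial, I would reduce the identity modulo $\frak q\CO_F$: property (ii) gives $\lambda_{\frak q,{\bf f}'}({\bf c})\equiv 1\pmod{\frak q\CO_F}$; the divisor of $\lambda_{\frak p,{\bf f}'}({\bf c})$ is supported above $K$--primes dividing $\bf c$ (hence coprime to $\frak q$), so $\lambda_{\frak p,{\bf f}'}({\bf c})$ is a $\frak q$--unit; and $\sigma_{\frak p},\sigma_{\frak q}$ both stabilize the $G(F/K)$--invariant ideal $\frak q\CO_F$. A direct computation using these facts shows that both sides of the displayed equation reduce to $\lambda_{\frak p,{\bf f}'}({\bf c})\pmod{\frak q\CO_F}$. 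The ratio therefore lies in $\mu_F\cap(1+\frak q\CO_F)$, a group which is trivial because $\frak q$ is coprime to $w_F$ and so $\mu_F$ injects into $(\CO_F/\frak q\CO_F)^\times$.

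With well--definedness established, property (i) extends from primes to composites by induction on the number of prime factors via \eqref{theta-product}; property (ii) holds at primes by construction; and property (iii), for arbitrary $\bf a,\bf b\in\mathcal B_{\bf c,{\bf f}'}(F/K)$, follows from the order--independence of the construction by reading ${\bf a}{\bf b}$ as the concatenation of the prime factorizations of $\bf a$ and $\bf b$. For Galois--equivariance at a prime $\bf b$, observe that $\sigma(\lambda_{\bf b,{\bf f}'}({\bf c}))$ lies in $F_{\bf b}^\times$ (since $\sigma$ fixes the $K$--ideal ${\bf b}\CO_F$) and has divisor $\Theta_{0}({\bf b},{\bf f}')\cdot\widetilde{\sigma({\bf c})}$ (since $\Theta_{0}({\bf b},{\bf f}')$ commutes with $\sigma$); the uniqueness in (ii) applied to $\sigma({\bf c})$ then identifies this element with $\lambda_{\bf b,{\bf f}'}(\sigma({\bf c}))$, and the composite case follows by applying $\sigma$ to (iii). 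Finally, uniqueness of the whole family is now transparent: (i) and (ii) pin down the values at primes, after which (iii) forces the values at all composite ideals.
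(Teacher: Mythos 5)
Your proof is correct and follows essentially the same route as the paper: produce $\lambda_{\frak p,{\bf f}'}({\bf c})$ for primes $\frak p$ from Remark~\ref{annihilation-prime}, extend multiplicatively via the recursion coming from (iii), and prove well--definedness by showing the ambiguity is a root of unity $\zeta$ that lands in $\mu_F\cap F_{\frak q}^\times=\{1\}$, after which (iii), uniqueness, and $G(F/K)$--equivariance all drop out. Your adjacent--transposition framing and your explicit reduction--mod--$\frak q$ computation (using both the Frobenius congruence $\sigma_{\frak q}^{-1}(x)^{N\frak q}\equiv x\pmod{\frak q\CO_F}$ for $\frak q$--units $x$ and the $G$--stability of $\frak q\CO_F$) make precise the same two facts the paper compresses into the phrase ``since $\lambda_{\bf b,f'}({\bf c})$ is coprime to $\frak p$ and $\lambda_{\frak p,f'}({\bf c})\in F_{\frak p}^\times$, \dots\ $\zeta\in F_{\frak p}^\times$.''
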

\begin{proof} Let us first note that {\it if ${\bf b}$ is prime,} then
conditions (i) and (ii) determine $\lambda_{\bf b, f'}({\bf c})\in F_{\bf b}^\times$ uniquely. Indeed, Remark \ref{annihilation-prime}
shows that $\lambda_{\bf b, f'}({\bf c})$ is the unique element in $F_{\bf b}^\times$, such that
$$\Theta_{0}({\bf b}, {\bf f'})\cdot \widetilde{\bf c}={\rm div}_F(\lambda_{{\bf b}, \bf f'}(\bf c)).$$
We define elements $\lambda_{\bf b, f'}(\bf c)$ satisfying (i) and (ii) for all ${\bf b}\in \mathcal B_{\bf c, f'}(F/K)$ by induction on the number of (not necessarily distinct)
prime factors of ${\bf b}$ as follows. For ideals ${\bf b}$ with one prime factor this has just been achieved. Now, assume that we have achieved this
for ideals with $(n-1)$ prime factors, for some $n\geq 2$. Let ${\bf b'}\in \mathcal B_{\bf c, f'}(F/K)$ equal to a product of $n$ primes. Let ${\frak p}$ be a prime dividing
$\bf b'$ and let ${\bf b'}={\bf b}\frak p$. Obviously, we have $\mathfrak p, {\bf b}\in \mathcal B_{\bf c, f'}(F/K)$. We let
\begin{equation}\label{induction}
\lambda_{{\bf b'}, {\bf f'}}(\bf c):=\lambda_{{\bf b}, {\bf f'}}(\bf c)^{{\rm N}\frak p\cdot\sigma_{\frak p}^{-1}}\cdot \lambda_{\frak p, {\bf f'}}(\bf c),\end{equation}
which clearly satisfies (i), by Lemma \ref{BS-elements-lemma}(ii).
In order to check that this definition does not depend on our choice of the prime ${\frak p}$, we need to show that
\begin{equation}\label{commuting}\lambda_{{\bf b}, {\bf f'}}(\bf c)^{{\rm N}\frak p\cdot\sigma_{\frak p}^{-1}}\cdot \lambda_{\frak p, {\bf f'}}(\bf c)=\lambda_{{\bf b}, {\bf f'}}(\bf c)\cdot \lambda_{\frak p, {\bf f'}}(\bf c)^{{\rm N}{\bf b}\cdot\sigma_{\bf b}^{-1}}.\end{equation}
However, equation \eqref{theta-product} shows that both sides of the equality to be proved have the same (Arakelov) divisor, namely ${\Theta_0({\bf b}\frak p, {\bf f'})}\cdot\widetilde{\bf c}$. This means
that the two sides differ by a root of unity, say $\zeta\in\mu_F$. This implies that
$$\zeta= \lambda_{{\bf b}, {\bf f'}}(\bf c)^{{\rm N}\frak p\cdot\sigma_{\frak p}^{-1}-1}\cdot \lambda_{\frak p, {\bf f'}}(\bf c)^{1-{\rm N}{\bf b}\cdot\sigma_{\bf b}^{-1}}.$$
Now, since $\lambda_{{\bf b}, {\bf f'}}(\bf c)$ is coprime to $\frak p$ and $\lambda_{\frak p, {\bf f'}}({\bf c})\in F_{\frak p}^\times$, the last equality implies that
$\zeta\in F_{\frak p}^\times$. However, since $\frak p$ is coprime to $w_F$, this implies that $\zeta=1$, which concludes the proof of \eqref{commuting}.

Now, based on the inductive construction \eqref{induction} given above, it is easily proved that (iii) is satisfied. Also, the uniqueness of $\{\lambda_{{\bf b, f'}}({\bf c})\mid {\bf b}\in\mathcal B_{\bf c, f'}(F/K)\}$ follows
immediately from (iii) and the uniqueness of $\lambda_{{\bf b, f'}}({\bf c})$ for ${\bf b}$ prime in $\mathcal B_{\bf c, f'}(F/K)\}$.

The last statement in the Proposition follows by checking first that the set $\{\sigma(\lambda_{{\bf b'}, {\bf f'}}(\bf c))\mid {\bf b}\in\mathcal B_{\bf c, f'}(F/K)\}$ satisfies
properties (i)--(iii) with ${\bf c}$ replaced with $\sigma({\bf c})$. This follows from the obvious fact that the map ${\rm div}_F$ is $G(F/K)$--equivariant. Finally, one uses uniqueness
in order to prove the last equality in the Proposition.
\end{proof}

\begin{definition} Let $F/K$ and ${\bf f'}$ be as above. Assume that ${\rm BrSt}(F/K, S_{\bf f'})$ holds.
Then, for any fractional $\CO_F$--ideal ${\bf c}$ and any proper $\CO_K$--ideal ${\bf b}$ corime to ${\bf cf'}\cdot w_F$, the unique element
$\lambda_{\bf b, f'}({\bf c})$ in $F^\times$ produced by Proposition \ref{strong-BS-elements} is called {\it the strong Brumer--Stark element} associated to the data
$(F/K, {\bf f'}, {\bf b, c})$.
\end{definition}

\begin{remark}  The astute reader would have noticed, no doubt, that throughout the current section the elements
$\widehat\lambda_{\bf b, f'}(\bf c)$ and $\lambda_{\bf b, f'}(\bf c)$ depend only on ${\rm Supp}(\bf f')$ and not on
${\bf f'}$ per se. This is a direct consequence of Remark \ref{support}.
\label{support of f prime}\end{remark}

\begin{lemma}\label{norm-lemma} Under the hypotheses of Proposition \ref{strong-BS-elements}, assume in addition that
$E/K$ is an abelian CM extension of $K$, such that $F\subseteq E$, and ${\bf e'}$ is a $\CO_K$--ideal divisible by the primes dividing the conductor
of $E/K$ and those dividing ${\bf f'}$. Also, assume that conjectures ${\rm BrSt}(E/K, S_{\bf e'})$ and ${\rm BrSt}(F/K, S_{\bf f'})$ hold. Let ${\bf c}$ be a fractional $\CO_{E}$--ideal. Then, for any proper $\CO_K$--ideal ${\bf b}$ coprime to ${\bf ce'}\cdot w_{E}$, we have
$$N_{E/F}(\lambda_{\bf b, e'}({\bf c})) = \lambda_{\bf b, f'}(N_{E/F}({\bf c}))^{\prod_{{\frak p\mid {\bf e'}}\atop{{\frak p}\nmid {\bf f'}}}(1-\sigma_{\frak p}^{-1})},$$
where $N_{E/F}$ denotes the usual norm from $E$ down to $F$ at the level of elements and fractional ideals
in $E$, and the product is taken over prime $\CO_K$--ideals $\frak p$.
\end{lemma}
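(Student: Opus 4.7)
The plan is to reduce to the case where ${\bf b}$ is prime via the multiplicativity property of Proposition \ref{strong-BS-elements}(iii) on both sides, and then to check equality using the uniqueness criterion (divisor plus congruence modulo ${\bf b}$) built into the same proposition.

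First I would show multiplicativity in ${\bf b}$: applying $N_{E/F}$ to the identity
$\lambda_{{\bf ab},{\bf e'}}({\bf c}) = \lambda_{{\bf a},{\bf e'}}({\bf c})^{N{\bf b}\cdot\sigma_{\bf b}^{-1}}\cdot \lambda_{{\bf b},{\bf e'}}({\bf c})$
and using that the Artin symbol $\sigma_{\bf b}\in G(E/K)$ restricts to $\sigma_{\bf b}\in G(F/K)$ (both being unramified Frobenii at ${\bf b}$), one sees that $N_{E/F}(\lambda_{{\bf b},{\bf e'}}({\bf c}))$ satisfies the same multiplicative law in ${\bf b}$ as does $\lambda_{{\bf b},{\bf f'}}(N_{E/F}({\bf c}))^{P}$, where
$P:=\prod_{{\frak p}\mid {\bf e'},\,{\frak p}\nmid {\bf f'}}(1-\sigma_{\frak p}^{-1})\in\Z[G(F/K)]$.
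Hence it suffices to prove the identity when ${\bf b}={\frak q}$ is a prime of $K$ coprime to ${\bf c}\cdot{\bf e'}\cdot w_E$.

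Second, I would establish the zeta-function identity
$$\pi\bigl(\Theta_0^{E/K}({\frak q},{\bf e'})\bigr) \;=\; \Theta_0^{F/K}({\frak q},{\bf f'})\cdot P$$
in $\Z[G(F/K)]$, where $\pi\colon \Z[G(E/K)]\to \Z[G(F/K)]$ is the restriction map. This follows by grouping the partial zeta functions of $E/K$ over cosets of $G(E/F)\subseteq G(E/K)$ to recover the partial zeta functions of $F/K$ with modulus ${\bf e'}$, and then removing the extra Euler factors at the primes in ${\rm Supp}({\bf e'})\setminus{\rm Supp}({\bf f'})$, which are unramified in $F/K$ by our assumption on ${\bf f'}$. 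Since Galois action preserves ramification indices and residue degrees, the norm on Arakelov divisors satisfies $N_{E/F}(\Theta\cdot D)=\pi(\Theta)\cdot N_{E/F}(D)$ for $\Theta\in \Z[G(E/K)]$ and $D\in {\rm Div}_{S_\infty}(E)$. Combined with ${\rm div}_F\circ N_{E/F}=N_{E/F}\circ {\rm div}_E$, Proposition \ref{strong-BS-elements}(i), and the identity above, both sides of the proposed equality have divisor equal to $\Theta_0^{F/K}({\frak q},{\bf f'})\cdot P\cdot \widetilde{N_{E/F}({\bf c})}$ in ${\rm Div}_{S_\infty}(F)$.

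Finally, to eliminate the resulting root-of-unity ambiguity, note that Proposition \ref{strong-BS-elements}(ii) gives $\lambda_{{\frak q},{\bf e'}}({\bf c})\in E_{\frak q}^\times$; all its $G(E/F)$-conjugates are then congruent to $1$ modulo ${\frak q}\CO_E$, so the same is true of $N_{E/F}(\lambda_{{\frak q},{\bf e'}}({\bf c}))$, placing it in $F_{\frak q}^\times$. The right-hand side, being a product of elements of $F_{\frak q}^\times$, lies in $F_{\frak q}^\times$ as well. Since ${\frak q}$ is coprime to $w_E$, and hence to $w_F$, we have $\mu_F\cap F_{\frak q}^\times=\{1\}$, so the two sides must coincide. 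I expect the main technical obstacle to be the zeta-function bookkeeping in step two, where one must carefully track Frobenius restrictions and Euler factors across the two extensions; the rest is controlled entirely by the uniqueness arguments already encoded in Proposition \ref{strong-BS-elements}.
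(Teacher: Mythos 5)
Your proof is correct and takes essentially the same approach as the paper's, which shows that both families indexed by ${\bf b}$ satisfy properties (i)--(iii) of Proposition \ref{strong-BS-elements} for the data $(F/K,{\bf e'},N_{E/F}({\bf c}))$ and then invokes uniqueness; you simply unroll the uniqueness argument by first using (iii) to reduce to the prime case and then verifying (i) via the Stickelberger restriction identity and $Tr_{E/F}\circ\mathrm{div}_E=\mathrm{div}_F\circ N_{E/F}$, and (ii) via the congruence modulo ${\frak q}$ to kill the root-of-unity ambiguity. Both the key divisor/trace compatibility and the identity $\mathrm{Res}_{E/K}(\Theta_0^E({\bf e'},{\bf b}))=\Theta_0({\bf f'},{\bf b})\cdot\prod_{\frak p}(1-\sigma_{\frak p}^{-1})$ are exactly the ingredients the paper points to.
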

\begin{proof} The proof is straightforward. The main idea is to prove that the elements
$\{N_{E/F}(\lambda_{\bf b, e'}({\bf c}))\mid {\bf b}\}$ and $\{\lambda_{\bf b, f'}(N_{E/F}({\bf c}))^{\Pi_{\frak p}(1-\sigma_{\frak p}^{-1})}\mid {\bf b}\}$ satisfy properties (i)--(iii) in the statement of
Proposition \ref{strong-BS-elements} with ${\bf c}$ and ${\bf f'}$ replaced by $N_{E/F}({\bf c})$ and ${\bf e'}$, respectively, and then use the uniqueness
property. Checking (ii) and (iii) is immediate. In order to check (i), first one uses the inflation property of
Artin $L$--functions to prove that
$${\rm Res}_{E/K}(\Theta^E_0({\bf e', b}))= \Theta_0({\bf e', b})= \Theta_0({\bf f', b})\cdot \prod_{{\frak p\mid {\bf e'}}\atop{{\frak p}\nmid {\bf f'}}}(1-\sigma_{\frak p}^{-1}),$$
for all ${\bf b}$ as above, where $\Theta^E_0({\bf e', b})$ is the Stickelberger element in $\Z[G(E/K)]$ associated to the data
$(E/K, {\bf e', b})$
and ${\rm Res}_{E/K}:\Z[G(E/K)]\to \Z[G(F/K)]$ is the Galois restriction group ring morphism. Secondly, one uses
the easily verified
$$Tr_{E/F}\circ{\rm div}_{E}={\rm div}_F\circ N_{E/F},$$
where $Tr_{E/F}: {\rm Div}_{S_\infty}(E)\to {\rm Div}_{S_\infty}(F)$ is the usual trace at the level of Arakelov divisors.
We leave the details to the interested reader.
\end{proof}

\begin{proposition}[Hecke characters]\label{Hecke-characters} Let $F/K$ and ${\bf f'}$ be as above. Assume that ${\rm BrSt}(F/K, {\bf f'})$ holds and fix
a proper $\CO_K$--ideal ${\bf b}$, coprime to ${\bf f'}\cdot w_F$. Let $I_F({\bf b})$ denote the group of fractional $\CO_F$--ideals coprime to ${\bf b}$.
Then, the following hold:
\begin{enumerate}
\item[(i)] For all ${\bf c, c'}\in I_F({\bf b})$, we have
$$\lambda_{\bf b, f'}({\bf c\cdot c'})=\lambda_{\bf b, f'}({\bf c})\cdot \lambda_{\bf b, f'}({\bf c'}).$$
\item[(ii)] If $\varepsilon\in F^\times$, such that $\varepsilon\equiv 1\mod^{\times}{\bf b}\CO_F$, we have
$$\lambda_{\bf b, f'}(\varepsilon O_F)=\varepsilon^{\Theta_0({\bf b, f'})}.$$
\item[(iii)] The group morphism
$$\lambda_{\bf b, f'}: I_F({\bf b})\to F^\times, \qquad {\bf c}\to \lambda_{\bf b, f'}({\bf c})$$
is a Hecke character for $F$ of conductor ${\bf b}\CO_F$ and of infinite type $\Theta_0({\bf b, f'})$.
\item [(iv)] The Hecke character $\lambda_{\bf b, f'}$ is $G(F/K)$--equivariant.
\end{enumerate}
\end{proposition}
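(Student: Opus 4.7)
The plan is to deduce all four assertions from Proposition \ref{strong-BS-elements}. Assertion (iv) is immediate from the last displayed identity of that proposition, and (iii) will be a formal consequence of (i) and (ii): once $\lambda_{\bf b, f'}$ is known to be a group morphism $I_F({\bf b}) \to F^\times$ that coincides on principal ideals generated by $\varepsilon \equiv 1 \bmod^\times {\bf b}\CO_F$ with $\varepsilon \mapsto \varepsilon^{\Theta_0({\bf b, f'})}$, it is by definition a Hecke character of conductor ${\bf b}\CO_F$ and infinite type $\Theta_0({\bf b, f'})$.

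For (i), I would argue via the uniqueness clause of Proposition \ref{strong-BS-elements} rather than a direct divisor chase. Fixing ${\bf c}, {\bf c}' \in I_F({\bf b})$, I would introduce the auxiliary family
$$\mu_{\bf a} := \lambda_{\bf a, f'}({\bf c}) \cdot \lambda_{\bf a, f'}({\bf c}'), \qquad {\bf a} \in \mathcal B_{{\bf c}{\bf c}', {\bf f'}}(F/K),$$
and verify that it satisfies properties (i)--(iii) of Proposition \ref{strong-BS-elements} with ${\bf c}$ replaced by ${\bf c}{\bf c}'$. The divisor identity follows from additivity of $\widetilde{(\cdot)}$ under multiplication of ideals; the containment $\mu_{\bf a} \in F_{\bf a}^\times$ for prime ${\bf a}$ is automatic because $F_{\bf a}^\times$ is a subgroup; and the multiplicative recursion in ${\bf a}$ is preserved because ${\rm N}{\bf b} \cdot \sigma_{\bf b}^{-1}$ acts as an exponent in an abelian group. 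Uniqueness then forces $\mu_{\bf b} = \lambda_{\bf b, f'}({\bf c}{\bf c}')$, which is (i).

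The main work is (ii), which I would first prove in the prime case ${\bf b} = \frak p$ and then bootstrap. The crucial observation is that by \eqref{odd-theta} one has $\Theta_0(\frak p, {\bf f'}) \in (1-j)\Z[G(F/K)]$, and that complex conjugation $j$ fixes every archimedean place of the CM field $F$; hence $\Theta_0(\frak p, {\bf f'})$ annihilates the archimedean component of ${\rm Div}_{S_\infty}(F)$. Since $\widetilde{\varepsilon\CO_F}$ and ${\rm div}_F(\varepsilon)$ differ only in their archimedean parts, the defining relation of Proposition \ref{strong-BS-elements}(i) rewrites as
$${\rm div}_F\bigl(\lambda_{\frak p, {\bf f'}}(\varepsilon \CO_F)\bigr) = \Theta_0(\frak p, {\bf f'}) \cdot {\rm div}_F(\varepsilon) = {\rm div}_F\bigl(\varepsilon^{\Theta_0(\frak p, {\bf f'})}\bigr).$$
Both sides lie in $F_{\frak p}^\times$ --- the left by Proposition \ref{strong-BS-elements}(ii), the right because $\varepsilon \equiv 1 \bmod \frak p$ --- and $F_{\frak p}^\times \cap \mu_F = \{1\}$ by coprimality of $\frak p$ with $w_F$, which kills the $\mu_F$-ambiguity. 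For general ${\bf b}$, I would induct on the number of prime factors of ${\bf b}$ using the recursion Proposition \ref{strong-BS-elements}(iii) combined with the additivity relation \eqref{theta-product} for $\Theta_0$; the only input needed at each step is $\varepsilon \equiv 1 \bmod \frak p$ for every prime $\frak p$ dividing ${\bf b}$, which is strictly weaker than and implied by $\varepsilon \equiv 1 \bmod^\times {\bf b}\CO_F$.

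The hard part is really only the archimedean vanishing in the prime case of (ii); once the CM parity of $\Theta_0$ is leveraged to discard the infinite part of the Arakelov divisor, everything else reduces to bookkeeping built on the uniqueness and multiplicative structure already established in Proposition \ref{strong-BS-elements}.
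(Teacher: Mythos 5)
Your proposal is correct and follows essentially the same route as the paper: (i) via the uniqueness clause of Proposition \ref{strong-BS-elements} applied to the product family, (ii) by first handling the prime case through the divisor identity and the injectivity of $\mathrm{div}_F$ on $F_{\frak p}^\times$ and then inducting on the number of prime factors of $\bf b$, and (iii), (iv) as formal consequences. Your explicit justification that $\Theta_0(\frak p, {\bf f'}) \in (1-j)\Z[G(F/K)]$ kills the archimedean component (because complex conjugation fixes every archimedean place of a CM field) fills in a step the paper leaves implicit in its reference to Remark \ref{annihilation-prime}, but this is a matter of exposition rather than a genuinely different argument.
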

\begin{proof} In order to prove (i), fix ${\bf c}$ and ${\bf c'}$ as above. Then observe that both sets
$$\{\lambda_{\bf a, f'}(\bf cc')\mid {\bf a}\in\mathcal B_{\bf cc', f'}(F/K)\}, \qquad  \{\lambda_{\bf a, f'}(\bf c)\cdot\lambda_{\bf a, f'}(\bf c') \mid {\bf a}\in\mathcal B_{\bf cc', f'}(F/K)\}$$
satisfy properties (i--iii) in Proposition \ref{strong-BS-elements} for the fractional $\CO_F$--ideal ${\bf c\cdot c'}$.
Then, apply the uniqueness property of these elements.

In order to prove (ii), observe that since $\varepsilon^{\Theta_0({\frak p}, {\bf f'})}\equiv 1\mod^{\times}{\frak p}\CO_F$ for any prime ideal $\frak p\mid{\bf b}$
and ${\rm div}_F(\varepsilon^{\Theta_0({\frak p}, {\bf f'})})=\Theta_0({\frak p}, {\bf f'})\cdot\widetilde{\varepsilon\CO_F}$ (since ${\rm div}_F$ is $G(F/K)$--equivariant),
we have an equality in $F^\times$
$$\lambda_{{\frak p}, {\bf f'}}(\varepsilon O_F)=\varepsilon^{\Theta_0(\frak p, {\bf f'})},$$
for any such prime $\frak p$. Now, (ii) follows from the above equality and Proposition \ref{strong-BS-elements}(iii), by induction
on the number of prime factors of ${\bf b}$.

The statement in (iii) is a direct consequence of (i) and (ii) and the definition of a Hecke character of a given conductor ${\bf b}$ and given infinite type  $\Theta\in \Z[G(F/K)]$.

Finally, (iv) follows from the last statement of Proposition \ref{strong-BS-elements}.
\end{proof}

\begin{remark} The Hecke characters $\lambda_{\bf b, f'}$ constructed above are vast generalizations
of Weil's Jacobi sum Hecke characters (\cite{Weil}). Weil's construction can be obtained from the above
when setting $K=\Q$. Note that in that case the Brumer--Stark conjecture is known to hold due
to Stickelberger's classical theorem (see \cite{Popescu-PCMI}, Section 4.3 for more details.)
The effort to ``align general Brumer--Stark elements'' into Hecke characters was initiated by
David Hayes in \cite{Hayes} and achieved with different methods and at a lower level of generality
by Yang in \cite{Yang}. However, the reader should be aware of the fact that the proof of the $2$--primary part of Theorem 5 in \cite{Yang} is incorrect.
\end{remark}

\begin{lemma}\label{extend-lemma} Under the assumptions of Proposition \ref{Hecke-characters}, there exists a (not necessarily unique) $\Z[G(F/K)]$--module morphism
$$\lambda^\ast_{\bf b, f'}: I_F\to F^\times$$
which extends $\lambda_{\bf b, f'}$ and satisfies the properties
\begin{equation}\label{extend}{\rm div}_F(\lambda^\ast_{\bf b, f'}({\bf c}))=\Theta_0({\bf b, f'})\cdot\widetilde{\bf c}, \qquad  \lambda^\ast_{\bf b, f'}({\bf c})^{(1+j)}=1,
\end{equation}
for all ${\bf c}\in I_F.$
\end{lemma}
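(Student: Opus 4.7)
The plan is to split $I_F = I_F({\bf b}) \oplus I_F^{\bf b}$ as $\Z[G(F/K)]$-modules, where $I_F^{\bf b}$ is the free abelian group on the prime $\CO_F$-ideals dividing ${\bf b}\CO_F$, and define $\lambda^\ast_{\bf b, f'}$ on each summand separately. On $I_F({\bf b})$, take $\lambda^\ast_{\bf b, f'}:=\lambda_{\bf b, f'}$ from Proposition \ref{Hecke-characters}: this restriction is $\Z[G(F/K)]$-linear by part (iv) of that proposition, satisfies the divisor condition by Proposition \ref{strong-BS-elements}(i), and satisfies the $(1+j)$-vanishing by Lemma \ref{BS-elements-lemma}(i).

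For the summand $I_F^{\bf b}$, Remark \ref{annihilation-any} assigns to each prime $\frak q\mid{\bf b}\CO_F$ a unique class $\wlambda_{\bf b, f'}(\frak q)\in F^\times/\mu_F$ satisfying ${\rm div}_F(\wlambda_{\bf b, f'}(\frak q)) = \Theta_0({\bf b, f'})\cdot\widetilde{\frak q}$, uniqueness being guaranteed by the injectivity of ${\rm div}_F$ on $F^\times/\mu_F$ (Remark \ref{injective-div}). This uniqueness, together with the $G(F/K)$-equivariance of ${\rm div}_F$ and the commutativity of $G(F/K)$, forces $\wlambda_{\bf b, f'}(\sigma\frak q) = \sigma\wlambda_{\bf b, f'}(\frak q)$ for all $\sigma\in G(F/K)$. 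Extending $\Z$-linearly then produces a $\Z[G(F/K)]$-module morphism $\wlambda^\ast:I_F^{\bf b}\to F^\times/\mu_F$, and the remaining task is to lift $\wlambda^\ast$ along the surjection $F^\times\twoheadrightarrow F^\times/\mu_F$ in a $\Z[G(F/K)]$-equivariant way.

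To construct such a lift, fix representatives $\{\frak q_i\}_{i\in I}$ for the $G(F/K)$-orbits of primes dividing ${\bf b}\CO_F$. A $\Z[G(F/K)]$-equivariant lift amounts to choosing, for each $i$, a preimage $\lambda^\ast(\frak q_i)\in F^\times$ of $\wlambda_{\bf b, f'}(\frak q_i)$ that is fixed by the decomposition group $G_{\frak q_i}$, then setting $\lambda^\ast(\sigma\frak q_i):=\sigma(\lambda^\ast(\frak q_i))$ on the orbit and extending $\Z$-linearly. The main obstacle is producing a $G_{\frak q_i}$-invariant preimage: for any initial choice $\lambda_0\in F^\times$, the assignment $\tau\mapsto\tau(\lambda_0)/\lambda_0$ is a $1$-cocycle $G_{\frak q_i}\to\mu_F$ whose cohomology class in $H^1(G_{\frak q_i},\mu_F)$ is precisely the obstruction. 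I plan to dispose of it by exploiting \eqref{odd-theta}: writing $\Theta_0({\bf b, f'}) = (1-j)\Theta'$ with $\Theta'\in\Z[G(F/K)]$, seek $\lambda^\ast(\frak q_i)$ in the form $x^{1-j}$ for some $x\in F^\times$ with ${\rm div}_F(x) - \Theta'\widetilde{\frak q_i}$ lying in the $j$-fixed part of ${\rm Div}_{S_\infty}(F)$. Such an $x$ produces $\lambda^\ast(\frak q_i)^{1+j} = x^{(1-j)(1+j)} = x^{1-j^2} = 1$ automatically, and $G_{\frak q_i}$-invariance of $\lambda^\ast(\frak q_i) = x^{1-j}$ reduces to the condition $\tau(x)/x \in (F^+)^\times$ for $\tau\in G_{\frak q_i}$, which can be arranged by adjusting $x$ by an element of $(F^+)^\times$ (a modification that is killed by $1-j$ and so leaves $\lambda^\ast(\frak q_i)$ unchanged). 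Once invariant lifts are chosen at all orbit representatives, the two conditions in \eqref{extend} propagate from the prime generators to all of $I_F$ by the $\Z$-linearity of ${\rm div}_F$ and of the $(1+j)$-power map $x\mapsto x^{1+j}$ on $F^\times$.
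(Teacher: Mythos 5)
Your overall strategy --- splitting $I_F = I_F({\bf b}) \oplus I_F^{\bf b}$ as $\Z[G(F/K)]$--modules and defining the extension $\Z$--linearly from $G_{\frak q_i}$--invariant lifts chosen at orbit representatives of the primes dividing ${\bf b}\CO_F$ --- is a natural one, and your framing of the difficulty as a cohomological obstruction in $H^1(G_{\frak q_i},\mu_F)$ is correct. However, the step where you dispose of that obstruction contains a genuine gap.

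You claim that the invariance condition $\tau(x)/x\in(F^+)^\times$ for all $\tau\in G_{\frak q_i}$ ``can be arranged by adjusting $x$ by an element of $(F^+)^\times$.'' But, as you yourself note in the parenthetical, such a modification is killed by $1-j$ and therefore leaves $\lambda^\ast(\frak q_i)=x^{1-j}$ unchanged --- hence it also leaves the $G_{\frak q_i}$--invariance (or lack thereof) of $x^{1-j}$ unchanged. Concretely, for $u\in(F^+)^\times$ and $\tau\in G_{\frak q_i}$ one has $\tau(ux)/(ux)=(\tau(u)/u)\cdot(\tau(x)/x)$, and $\tau(u)/u$ lies in $(F^+)^\times$ automatically since $\tau$ commutes with $j$; the coset of $\tau(x)/x$ modulo $(F^+)^\times$ is therefore invariant under the adjustment. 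The argument is circular: the only modifications you allow are precisely those that cannot change the quantity you need to alter. Moreover, $H^1(G_{\frak q_i},\mu_F)$ does not vanish in general --- for instance, if $j\in G_{\frak q_i}$ then $H^1(\langle j\rangle,\mu_F)\cong\mu_F/\mu_F^2\cong\Z/2\Z$, since $j$ acts on $\mu_F$ by inversion and $|\mu_F|$ is even for $F$ CM --- so the obstruction must be killed by an argument specific to the element $\wlambda_{\bf b,f'}(\frak q_i)$, not by cohomological generalities.

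The paper's proof sidesteps the lifting problem entirely by a different mechanism. It invokes Lemme 1.1 of \cite[p.~82]{Tate} to write $(1-N{\bf b}\cdot\sigma_{\bf b}^{-1})=\sum_{i=1}^n x_i\,(1-N{\bf p}_i\cdot\sigma_{{\bf p}_i}^{-1})$ with $x_i\in\Z[G]$ and $\CO_K$--primes ${\bf p}_i$ coprime to ${\bf b}{\bf f'}\cdot w_F$; consequently $\Theta_0({\bf b,f'})=\sum_i x_i\,\Theta_0({\bf p}_i,{\bf f'})$, and one sets $\lambda^\ast_{\bf b,f'}(w):=\prod_i\lambda_{{\bf p}_i,{\bf f'}}(w)^{x_i}$ for each $\CO_F$--prime $w\mid{\bf b}$. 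Because $w$ is coprime to every ${\bf p}_i$, the factors are honest values of the Hecke characters $\lambda_{{\bf p}_i,{\bf f'}}$ in $F^\times$, not merely classes in $F^\times/\mu_F$, so no lift has to be constructed; the divisor identity, the $(1+j)$--killing and the $G(F/K)$--equivariance then follow term by term from Proposition \ref{Hecke-characters} and Lemma \ref{BS-elements-lemma}. Incidentally, this construction shows a posteriori that the obstruction class you identified does vanish, but your argument does not demonstrate it.
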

\begin{proof} For ideals ${\bf c}\in I_F({\bf b})$, we set $\lambda^\ast_{\bf b, f'}({\bf c}):=\lambda_{\bf b, f'}({\bf c}).$
Now, we need to define $\lambda^\ast_{\bf b, f'}(w)$ for $\CO_F$--primes $w$ which divide ${\bf b}$.
Since we obviously have
$$(1-N{\bf b}\cdot \sigma_{\bf b}^{-1})\in {\rm Ann}_{\Z[G]}(\mu_F),$$
Lemme 1.1 in \cite[p. 82]{Tate} implies that we can write (not uniquely)
$$(1-N{\bf b}\cdot \sigma_{\bf b}^{-1})=\sum_{i=1}^n x_i\cdot(1-N{\bf p}_i\cdot \sigma_{{\bf p}_i}^{-1}),$$
for some $n\in\Bbb N$, some $x_1,\dots,x_n\in\Z[G]$, and some $\CO_K$--primes ${\bf p}_1, \dots, {\bf p}_n$ which are coprime to ${\bf bf'}\cdot w_F$. Let us fix
$n$, $x_1, \dots, x_n$ and ${\bf p}_1, \dots, {\bf p}_n$ with the above properties. Note that we have
$$\Theta_0({\bf b, f'})=\sum_{i=1}^n x_i\cdot \Theta_0({\bf p}_i, {\bf f'}).$$
Now, for each $\CO_F$--prime $w$ with $w\mid {\bf b}$ (therefore $w\in I_F({\bf p}_i)$, for all $i$), we define
$$\lambda^\ast_{\bf b, f'}(w):=\prod_{i=1}^n \lambda_{{\bf p}_i,{\bf f'}}(w)^{x_i}.$$
This extends $\lambda^\ast_{\bf b, f'}$ to all $\CO_F$--primes dividing ${\bf b}$.
Finally, for any ${\bf c}\in I_F$ we set
$$\lambda^\ast_{\bf b, f'}({\bf c}):=\prod_{\frak p\mid {\bf c}}\lambda^\ast_{\bf b, f'}(\frak p)^{n_{\frak p}},\qquad \text{  if  } {\bf c}=\prod_{\frak p}{\frak p}^{n_{\frak p}}.$$
Above, the product is taken over all the $\CO_F$--primes $\frak p$. The reader can easily check that the map $\lambda^\ast_{\bf b, f'}$ satisfies all the desired properties.
\end{proof}

\begin{remark}\label{norm-relations-star}
If ${\bf c}=\epsilon\CO_F$ is a principal ideal generated by $\epsilon\in F^\times$, then
$$\lambda^\ast_{\bf b, f'}(\epsilon\CO_F)=\xi\cdot\epsilon^{\Theta_0({\bf b, f'})},$$
for some root of unity $\xi\in\mu_F$, as the divisor equality \eqref{extend} easily implies.

Also, with notations as in Lemma \ref{norm-lemma}, once we pick extensions $\lambda^\ast_{\bf b, e'}$ and $\lambda^\ast_{\bf b, f'}$, the norm relations between $\lambda^\ast_{\bf b, e'}({\bf c})$ and $\lambda^\ast_{\bf b, f'}(N_{E/F}({\bf c}))$ become
$$N_{E/F}(\lambda^\ast_{\bf b, e'}({\bf c})) = \xi_{\bf c}\cdot\lambda^\ast_{\bf b, f'}(N_{E/F}({\bf c}))^{\prod_{{\frak p\mid {\bf e'}}\atop{{\frak p}\nmid {\bf f'}}}(1-\sigma_{\frak p}^{-1})},$$
where $\xi_{\bf c}$ is a root of unity in $\mu_F$, which depends on ${\bf c}$. This follows easily from the divisor equality \eqref{extend}. Of course, if ${\bf b}$ and ${\bf c}$ are coprime,
then $\xi_{\bf c}=1$.
\end{remark}

\subsection{The imprimitive $l$--adic theory.} Very recently, Greither and the second author (see \cite{GP}, section 6.1) proved a strong form of the imprimitive
Brumer-Stark conjecture, away from its $2$--primary part and
under the hypothesis that certain Iwasawa $\mu$--invariants vanish (as conjectured
by Iwasawa.) In what follows, we will state a weak form of the main result in loc.cit. This result turns
out to imply the existence of an imprimitive $l$--adic version of (strong) Brumer--Stark elements and Hecke characters, for all
odd primes $l$, which is sufficient for the $K$--theoretic constructions which follow.

\begin{theorem}[Greither-Popescu, \cite{GP}]\label{GP}
Let $F/K$ be as above. Let $l$ be an odd prime and assume that the Iwasawa $\mu$--invariant $\mu_{F,l}$ associated
to $F$ and $l$ vanishes. Assume that ${\bf f'}$ is a proper $\CO_K$--ideal divisible by all the primes dividing
${\bf f}l$. Then,  for all prime $\CO_K$--ideals ${\bf b}$ coprime to ${\bf f'}\cdot w_F$, we have
$$\Theta_0({\bf b}, {\bf f'})\in {\rm Ann}_{\zl[G(F/K)]} ({\rm CH}^1(F)^0_{T_{\bf b}}\otimes\zl).$$
\end{theorem}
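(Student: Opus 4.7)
The plan is to prove Theorem \ref{GP} via Iwasawa theory, reducing the statement to an Equivariant Main Conjecture (EMC) at the infinite level and then descending. Let $F_\infty/F$ be the cyclotomic $\Z_l$-extension, set $\mathcal{G} := G(F_\infty/K) \simeq G(F/K) \times \Gamma$ with $\Gamma \simeq \Z_l$, and let $\Lambda := \Z_l[[\mathcal{G}]]$. The hypothesis that $\mathbf{f}'$ is divisible by all primes above $l$ is crucial: it allows us to work with an imprimitive Iwasawa-theoretic Stickelberger element $\Theta_\infty \in \Lambda$ whose projection to $\Z_l[G(F/K)]$ under evaluation at $s=0$ recovers $\Theta_0(\mathbf{b},\mathbf{f}')$ (the factor $(1 - N\mathbf{b} \cdot \sigma_\mathbf{b}^{-1})$ takes care of smoothing so the element lies in $\Lambda$ rather than its fraction field, cf.\ the Deligne--Ribet theorem already invoked above).

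Next, I would construct the ``correct'' Iwasawa module $\mathfrak{X}_\infty$ whose descent at finite level $F$ yields ${\rm CH}^1(F)^0_{T_\mathbf{b}} \otimes \Z_l$. Concretely, this should be a Galois group of a maximal abelian $l$-extension of $F_\infty$ unramified outside the primes above $\mathbf{f}'$ and totally split at primes in $T_\mathbf{b}$ (an $S$-ramified, $T$-split Galois group), possibly modified by an archimedean piece to recover the Arakelov, rather than the ordinary, class group. Standard Iwasawa-theoretic descent techniques (coinvariants, Tate cohomology sequences relating $(\mathfrak{X}_\infty)_\Gamma$ to the finite-level object) make this precise, and the imprimitivity condition $l \mid \mathbf{f}'$ kills the problematic $l$-adic ramification terms that otherwise obstruct a clean descent.

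Now I would invoke the EMC for $\mathfrak{X}_\infty$ over $\Lambda$: assuming $\mu_{F,l} = 0$, the module $\mathfrak{X}_\infty$ is $\Lambda$-torsion of projective dimension one in the relevant components, and its first Fitting ideal (equivalently, its characteristic ideal) equals the ideal $(\Theta_\infty) \subseteq \Lambda$. Since Fitting ideals annihilate, we obtain $\Theta_\infty \in {\rm Ann}_\Lambda(\mathfrak{X}_\infty)$, and specializing via $\Lambda \twoheadrightarrow \Z_l[G(F/K)]$ yields
$$\Theta_0(\mathbf{b},\mathbf{f}') \in {\rm Ann}_{\Z_l[G(F/K)]}\bigl({\rm CH}^1(F)^0_{T_\mathbf{b}} \otimes \Z_l\bigr),$$
which is the desired statement.

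The main obstacle is the EMC itself: proving that the characteristic ideal of the appropriate Iwasawa module coincides with the Stickelberger ideal requires the full strength of Wiles's work on the main conjecture for totally real fields (where the $\mu = 0$ hypothesis enters) together with a delicate choice of $\mathfrak{X}_\infty$ so that the characteristic ideal is actually generated by a single element $\Theta_\infty$ (rather than merely equal to a Fitting ideal that is only conjecturally principal). A secondary technical point is ensuring the descent is \emph{exact} in the equivariant sense, so that ${\rm Ann}_\Lambda$ passes to ${\rm Ann}_{\Z_l[G(F/K)]}$ without loss; this is where the restriction to prime $\mathbf{b}$ (with its single Euler factor) and the imprimitivity at $l$ together eliminate the cokernel of descent.
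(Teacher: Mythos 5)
This theorem is stated in the paper as a black box: the authors cite it directly from \cite{GP} (``we will state a weak form of the main result in loc.\,cit.'') and give no proof whatsoever. There is therefore no proof in the paper to compare your sketch against.

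That said, your outline is a faithful high-level description of the strategy actually pursued in \cite{GP}: formulate an Equivariant Main Conjecture over $\Lambda = \Z_l[[G(F_\infty/K)]]$, show under $\mu_{F,l}=0$ that the relevant Iwasawa module has its first Fitting ideal generated by the (imprimitive, $T$-smoothed) Stickelberger element, and descend to finite level. Where your sketch stays vague is exactly where the real content of \cite{GP} lies: the correct Iwasawa module is not a raw $S$-ramified, $T$-split Galois group but a Tate twist of the $l$-adic Tate module of a Picard $1$-motive $\mathcal{M}_{S,T}$, and the claim that its $\Lambda$-Fitting ideal is principal and generated by $\Theta_\infty$ is a theorem requiring Wiles's main conjecture plus the $\mu=0$ hypothesis (which yields $\Z_l$-freeness, hence projective dimension $\leq 1$). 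The descent is also not formal: one must verify that $\Gamma$-coinvariants land on the generalized Arakelov class group and that no cokernel terms spoil the annihilation, which is where the imprimitivity hypothesis $l\mid \mathbf{f}'$ and the single Euler factor from the prime $\mathbf{b}$ enter. So your plan is directionally right, but as a proof it is a pointer to \cite{GP} rather than a self-contained argument, which is precisely the role the statement plays in this paper too.
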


\begin{remark} Recall that a major conjecture in number theory due to Iwasawa states that $\mu_{F,l}=0$, for all
primes $l$. At this point, this conjecture is only known to hold if $F$ is an abelian extension of $\Q$. The general belief is that it holds in general.

For a given odd prime $l$, the above theorem only settles an $l$--imprimitive form of $l$--adic piece of the Brumer-Stark conjecture for $F/K$.
That is so because the ideal ${\bf f'}$ is forced to be divisible by all $l$--adic primes, whether these ramify in $F/K$ or not. Consequently, $\Theta_0({\bf b}, {\bf f'})$
is obtained by multiplying $\Theta_0({\bf b}, {\bf f})$ with a $\zl[G]$--multiple of the element
$$\Pi'_{{\bf l}\mid l}(1-\sigma_{\bf l}^{-1}),$$
(product taken over the $l$--adic primes ${\bf l}$ in $K$ which do not divide ${\bf f}$)
which is not invertible in $\zl[G]$, in general, therefore leading to a weaker annihilation result. On the other hand, for any $n\in\Z_{\geq 1}$,
$\Theta_n({\bf b}, {\bf f'})$
is obtained by multiplying $\Theta_n({\bf b}, {\bf f})$ with a $\zl[G]$--multiple of the element
$$\Pi'_{{\bf l}\mid l}(1-\sigma_{\bf l}^{-1}\cdot N{\bf l}^n),$$ which is invertible in $\zl[G]$. This explains why imprimitivity
is not an issue in our upcoming $K$--theoretic considerations.

At this point, only very partial results towards the $2$--primary piece of the Brumer-Stark conjecture have been proved, which is
the reason why we are staying away from $l=2$ throughout the rest of this paper.
\end{remark}

For a given odd prime $l$, we extend the divisor map ${\rm div}_{F}$ by $\zl$--linearity to
$${\rm div}_{F}\otimes{\mathbf 1}_{\zl}: F^\times\otimes\zl \to {\rm Div}_{S_\infty}^0(F)\otimes\zl.$$
However, for the sake of simplicity, we use ${\rm div}_{F}$ to denote this extension as well, whenever
the prime $l$ has been chosen and fixed.
The following consequences of Theorem \ref{GP} have identical proofs to those
of Lemma \ref{BS-elements-lemma}, Proposition \ref{strong-BS-elements} and Proposition \ref{Hecke-characters}, respectively.

\begin{corollary}[imprimitive $l$--adic Brumer-Stark elements]\label{GP-BS-elements} Assume that the hypotheses of Theorem \ref{GP} hold. Let ${\bf b}$ be a proper
$\CO_K$--ideal coprime to ${\bf f'}\cdot w_F$ and let ${\bf c}$ be a fractional $\CO_F$--ideal. Then, the following hold.
\begin{enumerate}
\item [(i)] If ${\bf b}$ is a prime not dividing ${\bf c}$,
then there exists a unique element $\lambda_{{\bf b, f'}}({\bf c})\in F^\times_{\bf b}\otimes\zl$, such that
$${\rm div}_F(\lambda_{{\bf b, f'}}({\bf c}))=\Theta_0({\bf b, f'})\cdot\widetilde{\bf c}.$$
\item[(ii)] There exists a unique element $\wlambda_{{\bf b, f'}}({\bf c})\in (F^\times/\mu_F)\otimes\zl$, such that
$${\rm div}_F(\wlambda_{{\bf b, f'}}({\bf c}))=\Theta_0({\bf b, f'})\cdot\widetilde{\bf c}.$$
Further, any $\lambda\in F^\times\otimes\zl$ whose class in $(F^\times/\mu_F)\otimes\zl$ is $\wlambda_{{\bf b, f'}}({\bf c})$ satisfies
$$\lambda^{(1+j)}=1.$$
\end{enumerate}
\end{corollary}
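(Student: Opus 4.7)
The plan is to transcribe the proofs of Remark \ref{annihilation-prime}, Remark \ref{annihilation-any} and Lemma \ref{BS-elements-lemma}(i) into the $l$--adic setting, replacing the classical Brumer--Stark annihilation statement by Theorem \ref{GP} and invoking flatness of $\zl$ over $\Z$ to preserve the relevant injectivities and exact sequences after tensoring.

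For part (i), Remark \ref{injective-div} gives that ${\rm div}_F$ is injective on $F_{\bf b}^\times$, so by flatness of $\zl$ the sequence
$$0 \longrightarrow F_{\bf b}^\times \xrightarrow{{\rm div}_F} {\rm Div}^0_{S_\infty, T_{\bf b}}(F) \longrightarrow {\rm CH}^1(F)^0_{T_{\bf b}} \longrightarrow 0$$
remains exact after applying $(-)\otimes\zl$. Given ${\bf c}\in I_F({\bf b})$ and a fixed archimedean place $\infty$ of $F$, the divisor $D:=\widetilde{\bf c}-\deg_F(\widetilde{\bf c})\cdot\infty$ lies in ${\rm Div}^0_{S_\infty, T_{\bf b}}(F)$, and Theorem \ref{GP} supplies a unique $\lambda_{\bf b,f'}({\bf c})\in F_{\bf b}^\times\otimes\zl$ with ${\rm div}_F(\lambda_{\bf b,f'}({\bf c}))=\Theta_0({\bf b,f'})\cdot D$. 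Since $\Theta_0({\bf b,f'})\in(1-j)\Z[G]$ by \eqref{odd-theta} and $j$ fixes every archimedean place of the CM field $F$ (so $(1-j)\cdot\infty=0$), the archimedean correction $\Theta_0({\bf b,f'})\cdot\deg_F(\widetilde{\bf c})\cdot\infty$ vanishes, and the required divisor identity drops out.

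For part (ii), existence when ${\bf c}\in I_F({\bf b})$ is obtained by pushing $\lambda_{\bf b,f'}({\bf c})$ from (i) down to $(F^\times/\mu_F)\otimes\zl$. For general ${\bf c}\in I_F$, I would imitate the construction at the end of Remark \ref{annihilation-any}: pick ${\bf d}\in I_F({\bf b})$, an $\widehat\varepsilon\in(F^\times/\mu_F)\otimes\zl$, and an archimedean correction $D_\infty$ with $\widetilde{\bf c}=\widetilde{\bf d}+{\rm div}_F(\widehat\varepsilon)+D_\infty$, and set
$$\wlambda_{\bf b,f'}({\bf c}):=\wlambda_{\bf b,f'}({\bf d})\cdot\widehat\varepsilon^{\Theta_0({\bf b,f'})};$$
the same $(1-j)$-vanishing kills $D_\infty$ and delivers the desired divisor identity. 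Uniqueness follows from the injectivity of ${\rm div}_F\otimes\mathbf 1_{\zl}$ on $(F^\times/\mu_F)\otimes\zl$, which is again Remark \ref{injective-div} combined with flatness. For the final clause, any lift $\lambda\in F^\times\otimes\zl$ of $\wlambda_{\bf b,f'}({\bf c})$ satisfies ${\rm div}_F(\lambda^{1+j})=(1+j)\Theta_0({\bf b,f'})\cdot\widetilde{\bf c}=0$ since $(1+j)(1-j)=0$ in $\Z[G]$. Flatness applied to $0\to\mu_F\to F^\times\to F^\times/\mu_F\to 0$ identifies $\ker({\rm div}_F\otimes\mathbf 1_{\zl})$ with $\mu_F\otimes\zl=\mu_{F,l}$, so $\lambda^{1+j}\in(\mu_{F,l})^{j=1}$, which coincides with the $l$--part of $\mu_{F^+}=\{\pm 1\}$ and hence is trivial because $l$ is odd.

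The only genuine subtlety is the propagation of injectivity of ${\rm div}_F$ through $(-)\otimes\zl$; flatness of $\zl$ over $\Z$ handles this in one stroke, and what remains is purely bookkeeping at archimedean places, which collapses because $\Theta_0({\bf b,f'})\in (1-j)\Z[G]$ and $j$ acts trivially on $S_\infty(F)$.
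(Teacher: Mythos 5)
Your proposal follows the same route as the paper, which simply declares the proofs of (i) and the first half of (ii) to be identical to those of \eqref{divisors-prime} and \eqref{divisor-general} after replacing the global Brumer--Stark hypothesis with Theorem \ref{GP}, and the last clause of (ii) to be identical to Lemma \ref{BS-elements-lemma}(i). You make explicit the one point the paper leaves tacit, namely that flatness of $\zl$ over $\Z$ propagates the injectivity of ${\rm div}_F$ on $F_{\bf b}^\times$ and on $F^\times/\mu_F$ through the functor $(-)\otimes\zl$, and you correctly verify that the $(1+j)$-part lands in $(\mu_{F,l})^{j=1}$, which is trivial for $l$ odd. This is a faithful and complete transcription; the details check out.
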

\begin{proof} The proofs of (i) and the first equality in (ii) are identical to those of equalities \eqref{divisors-prime} and \eqref{divisor-general}, respectively.
The only difference is that instead of assuming the Brumer-Stark conjecture, here one uses Theorem \ref{GP}. Finally, the proof of the last equality
in (ii) is identical to that of Lemma \ref{BS-elements-lemma}, part (i).
\end{proof}

\begin{corollary}[imprimitive $l$--adic strong Brumer--Stark elements]\label{GP-strong-BS-elements} Assume that the hypotheses of Theorem \ref{GP} hold.
Let ${\bf c}$ be a fractional $\CO_F$--ideal. Let $\mathcal B_{\bf c, f'}(F/K)$ be the set of proper $\CO_K$--ideals ${\bf b}$ which are
coprime to ${\bf cf'}\cdot w_F$.
Then, there exist unique elements $\lambda_{\bf b, f'}({\bf c})$ in $F^\times\otimes\zl$, for all ${\bf b}\in\mathcal B_{\bf c, f'}(F/K)\otimes\zl$, such that:
\begin{enumerate}
\item[(i)] ${\rm div}_F(\lambda_{\bf b, f'}({\bf c}))=\Theta_{0}({\bf b, f'})\cdot\widetilde{\bf c}$, for all ${\bf b}\in \mathcal B_{\bf c, f'}(F/K)$.
\item[(ii)] If ${\bf b}$ is a prime ideal in ${\mathcal B}_{\bf c, f'}(F/K)$, then $\lambda_{\bf b, f'}({\bf c})\in F_{\bf b}^\times\otimes\zl.$
\item[(iii)] If ${\bf a, b}\in\mathcal B_{\bf c, f'}(F/K)\otimes\zl$,  then
$$\lambda_{\bf ab, f'}({\bf c})=\lambda_{\bf a, f'}({\bf c})^{{\rm N}{\bf b}\cdot\sigma_{\bf b}^{-1}}\cdot \lambda_{\bf b, f'}({\bf c}).$$
\end{enumerate}
 Moreover, for all $\sigma\in G(F/K)$ and all ${\bf b}\in \mathcal B_{\bf c, f'}(F/K)\otimes\zl(=\mathcal B_{\sigma({\bf c}\otimes\zl), {\bf f'}}(F/K))$, we have
$$\lambda_{\bf b, f'}(\sigma({\bf c}))=\sigma(\lambda_{\bf b, f'}({\bf c})).$$
\end{corollary}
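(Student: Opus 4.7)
The plan is to carry the proof of Proposition \ref{strong-BS-elements} over word-for-word into the $l$--adic setting, replacing the Brumer--Stark conjecture with Corollary \ref{GP-BS-elements} at every place where existence of a prime Brumer--Stark element is invoked, and working throughout in the flat base extension $F^\times \otimes \zl$ of $F^\times$.

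First, for any prime ${\bf b} \in \mathcal B_{\bf c, f'}(F/K)$, Corollary \ref{GP-BS-elements}(i) provides a unique $\lambda_{\bf b, f'}({\bf c}) \in F_{\bf b}^\times \otimes \zl$ satisfying (i) and (ii). I then extend this, inductively on the number of prime factors of ${\bf b}$, to all of $\mathcal B_{\bf c, f'}(F/K)$: if ${\bf b}' = {\bf b}\,{\frak p}$ with ${\frak p}$ prime and ${\bf b}, {\frak p} \in \mathcal B_{\bf c, f'}(F/K)$, I set
$$\lambda_{{\bf b}', {\bf f'}}({\bf c}) := \lambda_{{\bf b}, {\bf f'}}({\bf c})^{{\rm N}{\frak p}\cdot\sigma_{\frak p}^{-1}} \cdot \lambda_{{\frak p}, {\bf f'}}({\bf c}),$$
exactly mimicking \eqref{induction}. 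Property (i) at ${\bf b}'$ then falls out of \eqref{theta-product}; property (iii) is built into the recursion; and uniqueness follows from (iii) together with the uniqueness already in hand for primes. The final Galois-equivariance assertion is obtained, as in the closing paragraph of the proof of Proposition \ref{strong-BS-elements}, by verifying that $\{\sigma(\lambda_{{\bf b}, {\bf f'}}({\bf c}))\}_{\bf b}$ meets (i)--(iii) with ${\bf c}$ replaced by $\sigma({\bf c})$ and invoking uniqueness.

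The one place where the $l$--adic setting demands a moment of care is the well-definedness of the inductive step, i.e.\ the analogue of identity \eqref{commuting}: I must show that two distinct prime factors ${\frak p}, {\frak q}$ of ${\bf b}'$ produce the same $\lambda_{{\bf b}', {\bf f'}}({\bf c})$. By \eqref{theta-product} the two candidates share the same Arakelov divisor in ${\rm Div}^0_{S_\infty}(F) \otimes \zl$, so they differ by some $\zeta \in \mu_F \otimes \zl$. Rearranging exactly as in the original proof, this $\zeta$ must also sit inside the image of $F_{\frak p}^\times \otimes \zl$ in $F^\times \otimes \zl$. The hard part will be concluding $\zeta = 1$: the original argument used $\mu_F \cap F_{\frak p}^\times = \{1\}$ inside $F^\times$ because ${\frak p} \nmid w_F$, and in the $l$--adic setting I appeal to the flatness of $\zl$ over $\Z$ to obtain
$$(\mu_F \otimes \zl) \cap (F_{\frak p}^\times \otimes \zl) \,=\, (\mu_F \cap F_{\frak p}^\times) \otimes \zl \,=\, 0$$
inside $F^\times \otimes \zl$, forcing $\zeta = 1$. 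With this single technical point settled, the rest of the argument is entirely mechanical.
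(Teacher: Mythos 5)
Your proposal is correct and follows exactly the same route as the paper, which simply states that the proof ``is identical to that of Proposition~\ref{strong-BS-elements}.'' The one nontrivial point you flag --- that $(\mu_F\otimes\zl)\cap(F_{\frak p}^\times\otimes\zl)=(\mu_F\cap F_{\frak p}^\times)\otimes\zl=0$ inside $F^\times\otimes\zl$, by flatness of $\zl$ over $\Z$ applied to the exact sequence $0\to A\cap B\to A\oplus B\to A+B\to 0$ --- is a genuine detail that the paper glosses over, and your justification of it is sound.
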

\begin{proof}The proof  is identical to that of Proposition \ref{strong-BS-elements}.
\end{proof}

\begin{corollary}[imprimitive $l$--adic Hecke characters]\label{GP-Hecke-characters}
Assume that the hypotheses of Theorem \ref{GP} hold. Fix
a proper $\CO_K$--ideal ${\bf b}$, coprime to ${\bf f'}\cdot w_F$. Let $I_F({\bf b})$ denote the group of fractional $\CO_F$--ideals coprime to ${\bf b}$.
Then, we have:
\begin{enumerate}
\item[(i)] For all ${\bf c, c'}\in I_F({\bf b})\otimes\zl$,
$$\lambda_{\bf b, f'}({\bf c\cdot c'})=\lambda_{\bf b, f'}({\bf c})\cdot \lambda_{\bf b, f'}({\bf c'}).$$
\item[(ii)] If $\varepsilon\in F^\times$, such that $\varepsilon\equiv 1\mod^{\times}{\bf b}\CO_F$, we have
$$\lambda_{\bf b, f'}(\varepsilon O_F)=\varepsilon^{\Theta_0({\bf b, f'})}.$$
\item[(iii)] The $\Z_l$--module morphism
$$\lambda_{\bf b, f'}: I_F({\bf b})\otimes\zl\to F^\times\otimes\zl, \qquad {\bf c}\to \lambda_{\bf b, f'}({\bf c})$$
 is $G(F/K)$--invariant.
\end{enumerate}
\end{corollary}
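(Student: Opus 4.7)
The plan is to transport the argument of Proposition \ref{Hecke-characters} verbatim to the $l$-adic imprimitive setting, replacing every appeal to Proposition \ref{strong-BS-elements} by an appeal to Corollary \ref{GP-strong-BS-elements}. All three claims will then follow formally from the uniqueness statement built into that corollary; the only new ingredient one needs to have in hand is the $l$-adic strong Brumer--Stark system, which Corollary \ref{GP-strong-BS-elements} already supplies.

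For part (i), I would fix ${\bf c}, {\bf c'} \in I_F({\bf b})\otimes\zl$ and consider the two families
\[
\{\lambda_{\bf a, f'}({\bf cc'})\}_{{\bf a}\in \mathcal{B}_{{\bf cc'}, {\bf f'}}(F/K)} \quad \text{and} \quad \{\lambda_{\bf a, f'}({\bf c})\cdot\lambda_{\bf a, f'}({\bf c'})\}_{{\bf a}\in\mathcal{B}_{{\bf cc'}, {\bf f'}}(F/K)}.
\]
The task is to verify that both families satisfy properties (i)--(iii) of Corollary \ref{GP-strong-BS-elements} with respect to the ideal ${\bf cc'}$. Property (i) for the second family follows because ${\rm div}_F$ is a group morphism and $\widetilde{{\bf cc'}}=\widetilde{{\bf c}}+\widetilde{{\bf c'}}$. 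Property (ii), membership in $F_{\bf a}^\times\otimes\zl$ when ${\bf a}$ is a prime in $\mathcal{B}_{{\bf cc'}, {\bf f'}}(F/K)$, is also immediate since such an ${\bf a}$ divides neither ${\bf c}$ nor ${\bf c'}$. Property (iii), the multiplicative cocycle relation, propagates from each factor to the product. The uniqueness clause of Corollary \ref{GP-strong-BS-elements} then forces the two families to coincide, and specializing to ${\bf a}={\bf b}$ gives (i).

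For part (ii), I would first treat the case of a prime ${\bf b}=\frak p$. Given $\varepsilon \in F^\times$ with $\varepsilon \equiv 1 \bmod^\times \frak p\,\CO_F$, the element $\varepsilon^{\Theta_0(\frak p, {\bf f'})}$ lies in $F_{\frak p}^\times\otimes\zl$ (the congruence subgroup is $G$-stable), and its divisor equals $\Theta_0(\frak p, {\bf f'})\cdot\widetilde{\varepsilon\CO_F}$ by $G(F/K)$-equivariance of ${\rm div}_F$. The uniqueness part of Corollary \ref{GP-BS-elements}(i) then yields $\lambda_{\frak p, {\bf f'}}(\varepsilon\CO_F)=\varepsilon^{\Theta_0(\frak p, {\bf f'})}$. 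The general case follows by induction on the number of prime factors of ${\bf b}$, combining Corollary \ref{GP-strong-BS-elements}(iii) with the identity $\Theta_0({\bf a}\frak p, {\bf f'})=N\frak p\,\sigma_{\frak p}^{-1}\Theta_0({\bf a}, {\bf f'})+\Theta_0(\frak p, {\bf f'})$.

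Part (iii) (which I read as $G(F/K)$-equivariance of $\lambda_{\bf b, f'}$) is a direct consequence of the last sentence of Corollary \ref{GP-strong-BS-elements}. I do not anticipate any genuine obstacle: all the substantive work has been absorbed into Theorem \ref{GP} and its corollaries, and what remains is a formal uniqueness-chase. The only mild care required is bookkeeping with the tensor products over $\zl$, which is automatic because all relevant elements were constructed inside $F^\times\otimes\zl$ in Corollary \ref{GP-strong-BS-elements} to begin with.
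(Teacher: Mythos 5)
Your proposal is correct and follows exactly the route the paper takes: the paper's proof of this corollary consists of the single line that the argument is identical to that of Proposition \ref{Hecke-characters}(i),(ii),(iv), with Proposition \ref{strong-BS-elements} replaced throughout by Corollary \ref{GP-strong-BS-elements}, which is precisely the substitution you carry out and spell out in detail.
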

\begin{proof} The proof is identical to that of Proposition \ref{Hecke-characters}(i), (ii), (iv).
\end{proof}

\begin{remark}\label{norm-extend-remark} Obvious analogues of Lemmas \ref{norm-lemma} (norm relations) and \ref{extend-lemma} (extension to
$I_F\otimes\zl$) hold for the
imprimitive $l$--adic maps $$\lambda_{\bf b, f'}:I_F({\bf b})\otimes\zl\to F^\times\otimes\zl$$ as well. We leave the details to the interested reader.
An extension of $\lambda_{\bf b, f'}$ to $I_F\otimes\zl$ as in Lemma \ref{extend-lemma} will be denoted by $\lambda^\ast_{\bf b, f'}$. Of course, the
imprimitive $l$--adic analogue of Remark \ref{norm-relations-star} holds.
\end{remark}

\bigskip

%-----\section{Galois equivariant Stickelberger splitting map}-----

\section{The Galois equivariant Stickelberger splitting map}

In this section, we will construct the $l$--adic Galois equivariant Stickelberger splitting
map in the Quillen localization sequence associated to the top field $F$ in an abelian Galois extension
$F/K$ of number fields, with $K$ totally real and $F$ either totally real or CM.
The main idea is to use the imprimitive $l$--adic Brumer--Stark elements for certain
cyclotomic extensions of $F$ along with powers of Bott elements to construct special elements in the $K$--theory of the top field $F$. Then, one
uses these special elements to construct the desired Galois equivariant splitting map.

 From now on, we fix an abelian extension $F/K$ as above, denote by $G$ its Galois group, fix an odd prime $l$  and work
under the assumption that Iwasawa's conjecture on the vanishing of the $\mu$--invariant associated to $F$
and $l$ holds. Further, we fix nontrivial $\CO_K$--ideals ${\bf f}$ and ${\bf b}$, with ${\bf f}$ divisible by the (finite) conductor
of $F/K$ and ${\bf b}$ coprime to $l{\bf f}$.

In the case $K=\Q$, a Stickelberger
splitting map was constructed in \cite{Ba1}. The construction in loc. cit. was refined in \cite{BG1}, \cite{BG2}. However, none of
these constructions led to Galois equivariant splitting maps.

In \cite{BP}, we constructed a Galois equivariant Stickelberger
splitting map for arbitrary totally real base fields $K$. However, that construction was very different from the one
we are about to describe in that it relies on a different class of special elements in the $K$--groups of the top field $F$.
\medskip

\subsection{$K$--theoretic tools.} In what follows, we will use freely $K$--theory with(out) coefficients as well as
the theory of Bockstein morphisms and that of Bott elements at the level of $K$--theory with coefficients. For the precise
definitions and main properties the reader can consult \S3 in \cite{BP}. However, just to set the notations, we will briefly
recall the main objects and facts.

Let $R$ be a unital ring and $l$ be an odd prime number. Then the $K$--groups with coefficients $K_n(R, \Z/l^k)$, $n\geq 1$, $k\geq 1$,  sit inside short
exact sequences

\begin{equation}\label{Bockstein-sequence}\xymatrix{
0\ar[r] &K_n(R)/l^k\ar[r] &K_n(R, \Z/l^k)\ar[r]^{b=b_R} &K_{n-1}(R)[l^k]\ar[r] &0\,,}
\end{equation}
where $b$ (sometimes denoted $b_R$, to emphasize dependence on the ring) is the Bockstein morphism associated to $R$, $n$ and $l^k$. We remind the reader that once $l^k$ and $n$ are fixed, $b$ and the exact sequences above
are functorial in $R$.

If  we assume that the characteristic of $R$ is not $l$ and that $R$ contains the group $\mu_{l^k}$ of $l^k$--roots of unity and fix a generator $\xi_{l^k}$ of  $\mu_{l^k}$,
then we have canonical special elements $\beta(\xi_{l^k})$ in $K_2(R, \Z/l^k)$ called Bott elements. Consequently, the product structure ``$\ast$'' at the level of $K$--theory with coefficients leads to canonical  elements $\beta(\xi_{l^k})^{\ast n}$
in $K_{2n}(R, \Z/l^k)$, for all $n\geq 1$. For given $l^k$ and $n$, the elements $\beta(\xi_{l^k})^{\ast n}$ are functorial in $R$ and the chosen $\xi_{l^k}$ in the obvious sense.
\medskip

\subsection{Constructing maps $\Lambda_v$.} In this section, we fix an integer $n\geq 1$, an odd prime $l$, and a nonzero $\CO_F$--prime $v$. We let $k_v:=\CO_F/v$ denote the residue field of $v$.
Our main goal is to construct a group morphism
$$\Lambda_v: K_{2n-1}(k_v)_l\to K_{2n}(F)_l$$
satisfying certain properties. Recall that the group $K_{2n-1}(k_v)$ is cyclic of order $q_v^n-1$, where $q_v=\mid k_v\mid$. The idea behind constructing
$\Lambda_v$ is first to get our hands on an explicit generator $\xi_v$ of $K_{2n-1}(k_v)_l$ and then construct an explicit element in $K_{2n}(F)_l$ annihilated by the order of this generator
and declare that to be the image of $\xi_v$ via $\Lambda_v$
\medskip

Obviously, if $\mid K_{2n-1}(k_v)_l\mid=1$, then $\xi_v=1$ and $\Lambda_v$ is the trivial map. So, let us assume that
 $\mid K_{2n-1}(k_v)_l\mid=l^k$, for some $k>0$. This implies that $v\nmid l$. Also, it is easily seen (see the proof
of Lemma 2 \cite[p. 336]{Ba1})
that this also implies that
$$
k>v_l(n),
$$
where $v_l(n)$ denotes the usual $l$--adic valuation of $n$.

Next, we let $E:=F(\mu_{l^k})$ and fix an $\CO_E$--prime $w$ sitting above $v$. It is easily seen (for full proofs see the proof
of Lemma 2 \cite[p. 336]{Ba1}) that $k_w=k_v(\mu_{l^k})$ and consequently that
\begin{equation}\label{order of K_{2n-1}(k_w)}\mid K_{2n-1}(k_w)_l\mid =l^{v_l(n)+k}\end{equation}
and that the image of the transfer map $Tr_{w/v}:K_{2n-1}(k_w)_l\to K_{2n-1}(k_v)_l$ satisfies
\begin{equation}\label{image-transfer}{\rm Im}(Tr_{w/v})=K_{2n-1}(k_v)_l.\end{equation}
Fix a generator $\xi_{l^k}$ of $\mu_{l^k}$ inside $E$. By abuse of notation, we will denote by $\xi_{l^k}$ its image in $k_w$ via
the reduction modulo $w$ map $\CO_E\to \CO_E/w=k_w$. This way, we obtain a generator $\xi_{l^k}$ of $\mu_{l^k}$ inside $k_w$. A result of Browder \cite{Br} shows
that $\beta(\xi_{l^k})^{\ast n}$ is a generator of $K_{2n}(k_w, \Z/l^k)$. On the other hand, since $K_{2n}(k_w)=0$, the Bockstein sequence \eqref{Bockstein-sequence} gives a group isomorphism
$$b: K_{2n}(k_w, \Z/l^k)\simeq K_{2n-1}(k_w)[l^k].$$
Therefore, $b(\beta(\xi_{l^k})^{\ast n})$ is a generator of $K_{2n-1}(k_w)[l^k]$. Consequently, equality \eqref{image-transfer} allows us to make the following.

\begin{definition}\label{picking xi_v} Let $\xi_v$ be a generator
of $K_{2n-1}(k_v)_l$, such that
\begin{equation}\label{define-xiv}
\xi_v^{l^{v_l(n)}}=Tr_{w/v}(b(\beta(\xi_{l^k})^{\ast n}).
\end{equation}
\end{definition}
\medskip

Now, we proceed to the construction of a special element in $K_{2n}(F)_l$ annihilated by $l^k$.
Let $\Gamma:=G(E/K)$ and let $G_v$ and $I_v$ denote the decomposition and
inertia groups of $v$ in $G(F/K)$, respectively. Also, let
$${\bf f}_E^\ast:={\bf f}\cdot l.$$
Note that ${\bf f}_E^\ast$ is divisible by all the primes which ramify in $E/K$ and all the $l$--adic primes and that ${\bf b}$ and ${\bf f}_E^\ast$ are coprime. If we denote
by $\{\Theta^E_m({\bf b}, {\bf f}^\ast_E)\}_{m\geq 0}$ the higher Stickelberger elements associated
to the data $(E/K, {\bf b}, {\bf f}_E^\ast)$, then these are all in $\zl[\Gamma]$. Further, if we set
\begin{equation}\label{gamma_l}\gamma_l:=\prod_{{{\bf l}\mid l}\atop{{\bf l}\nmid{\bf f}}}(1-\sigma_{\bf l}^{-1}\cdot N{\bf l}^n)^{-1} \in \zl[G],\end{equation}
then we have the obvious equality
\begin{equation}\label{restriction-theta}{\rm Res}_{E/F}(\Theta^E_n({\bf b}, {\bf f}^\ast_E)=\Theta_n({\bf b}, {\bf f})\cdot\gamma_l^{-1}.\end{equation}

Also, note that $E/K$ is
a CM abelian extension of a totally real number field. Consequently, Lemma \ref{GP-Hecke-characters} applies to the data $(E/K, {\bf f}_E^\ast, {\bf b})$. In particular, Remark \ref{norm-extend-remark} allows us to pick a $\zl[\Gamma]$--linear morphism
$$\lambda_{{\bf b}, {\bf f}_E^\ast}^\ast: I_E\otimes\zl\to E^\times\otimes\zl,$$
which extends the $l$--adic imprimitive Hecke character  $\lambda_{{\bf b}, {\bf f}_E^\ast}$ of conductor ${\bf b}$
and satisfies the properties
\begin{equation}\label{extension}{\rm div}_E(\lambda^\ast_{{\bf b}, {\bf f}_E^\ast}({\bf c}))=\Theta^E_0({\bf b}, {\bf f}^\ast_E)\cdot{\bf c}, \qquad
\lambda^\ast_{{\bf b}, {\bf f}^\ast_E}({\bf c})^{(1+j)}=1,
\end{equation}
for all ${\bf c}\in I_E$.

Let us pick an $\CO_E$--prime $w$ sitting above $v$. We let $\Gamma_w$ and $I_w$ denote its decomposition and inertia groups in $E/K$. We view  $\lambda^\ast_{{\bf b}, {\bf f}_E^\ast}(w)$ as an element
in $K_1(E)_l$ after the obvious identification $E^\times\otimes\zl\simeq K_1(E)_l$. Consequently, we obtain an element
$$\lambda^\ast_{{\bf b}, {\bf f}_E^\ast}(w)\ast b(\beta(\xi_{l^k})^{\ast n})\in K_1(E)_l\ast K_{2n-1}(E)[l^k]\subseteq K_{2n}(E)[l^k]$$
which is mapped via the usual transfer morphism $Tr_{E/F}: K_{2n}(E)_l\to K_{2n}(F)_l$ to
$$Tr_{E/F}(\lambda^\ast_{{\bf b}, {\bf f}_E^\ast}(w)\ast b(\beta(\xi_{l^k})^{\ast n}))\in K_{2n}(F)[l^k].$$

\begin{definition}\label{local-lambda-definition}Since the chosen generator $\xi_v$ of $K_{2n-1}(k_v)_l$ has order $l^k$, there exists
a unique $\zl$--linear map
$\Lambda_v:K_{2n-1}(k_v)_l\to K_{2n}(F)_l$
which satisfies
$$\Lambda_v(\xi_v):=Tr_{E/F}(\lambda^\ast_{{\bf b}, {\bf f}_E^\ast}(w)\ast b(\beta(\xi_{l^k})^{\ast n}))^{\gamma_l
}.$$
\end{definition}

\begin{remark} Note that the map $\Lambda_v$ depends in an easily described manner on the several choices we have made along the way: that of a prime $w$ sitting above $v$ in
$E$, that of a generator $\xi_{l^k}$ of $\mu_{l^k}$ in $E$, that of a generator $\xi_v$ of $K(k_v)_l$ and, finally, that of a $\zl[\Gamma]$--linear extension $\lambda^\ast_{{\bf b}, {\bf f}_E^\ast}$ of the $l$--adic imprimitive Hecke character  $\lambda_{{\bf b}, {\bf f}_E^\ast}$ of conductor ${\bf b}$.
\end{remark}
\medskip

The functoriality properties of $K$--groups imply that we have the following obvious isomorphisms of $\zl[G]$-- and $\zl[\Gamma]$--modules, respectively, for all $m\geq 0$.
\begin{equation}\label{identify-F} K_{m}(k_v)_l\otimes_{\zl[G_v]}\zl[G]\simeq \bigoplus_{\widehat\sigma\in G/G_v}K_{m}(k_{\sigma(v)})_l,\quad  \xi\otimes\sigma\to (1,\dots,1, {\sigma(\xi)},1, \dots, 1)\end{equation}
\begin{equation}\label{identify-K} K_{m}(k_w)_l\otimes_{\zl[\Gamma_w]}\zl[\Gamma]\simeq \bigoplus_{\widehat\gamma\in \Gamma/\Gamma_w}K_{m}(k_{\gamma(w)})_l, \quad \xi\otimes\gamma\to (1,\dots,1, {\gamma(\xi)},1, \dots, 1).\end{equation}
Above $\sigma\in G$, $\gamma\in\Gamma$, $\widehat\sigma$ and $\widehat\gamma$ are their classes in $G/G_v$ and $\Gamma/\Gamma_w$, respectively, and $\sigma(\xi)$ and $\gamma(\xi)$ appear in the $\sigma(v)$ and $\gamma(w)$--components, respectively.
In what follows, we will freely identify the left and right hand sides of these isomorphisms. In particular, if $\xi\in K_{m}(k_w)_l$ (or $\xi\in K_{m}(k_v)_l$) and
$\alpha\in\zl[\Gamma]$ (or $\alpha\in\zl[G]$), then $\xi\otimes\alpha$ will also be sometimes denoted by $\xi^\alpha$ and thought of as an element
in the direct sum on the right hand side of isomorphisms \eqref{identify-F} and \eqref{identify-K}.

\begin{remark} Let us note that if we set ${\bf c}:=w$, then \eqref{extension} can be rewritten as
\begin{equation}\label{boundary-lambda}
\partial_E(\lambda^\ast_{{\bf b}, {\bf f}_E^\ast}(w))=\partial_{E,\Gamma\cdot w}(\lambda^\ast_{{\bf b}, {\bf f}_E^\ast}(w))=1\otimes \Theta^E_0({\bf b}, {\bf f}^\ast_E),
\end{equation}
where $\partial_{E, \Gamma\cdot w}:K_{1}(E)_l\to \bigoplus_{\widehat\gamma\in\Gamma/\Gamma_w} K_{0}(k_{\gamma(w)})_l$ is the $\Gamma\cdot w$--supported boundary map and we identify
$$\bigoplus_{\widehat\gamma\in\Gamma/\Gamma_w} K_{0}(k_{\gamma(w)})_l\simeq K_0(k_w)_l\otimes_{\zl[\Gamma_w]}\zl[\Gamma]\simeq\zl\otimes_{\zl[\Gamma_w]}\zl[\Gamma]$$
as in the last displayed isomorphism above for $m=0$.
Also, let us note that since $K_{2n-1}(k_v)_l$ is a cyclic group (also cyclic $\zl[G_v]$--module) generated by $\xi_v$, we have
$$\bigoplus_{\widehat\sigma\in G/G_v}K_{2n-1}(k_{\sigma(v)})_l\simeq K_{2n-1}(k_v)_l\otimes_{\zl[G_v]}\zl[G]=\zl[G]\cdot(\xi_v\otimes 1).$$
Therefore, $\bigoplus_{\widehat\sigma\in G/G_v}K_{2n-1}(k_{\sigma(v)})_l$  is a cyclic $\zl[G]$--module generated by $\xi_v$.
\end{remark}

\begin{theorem}\label{local-lambda-theorem}
The map $\Lambda_v$ defined above satisfies the following properties.
\begin{enumerate}
\item It is $\zl[G_v]$--linear.
\item If $\partial_{F, G\cdot v}:K_{2n}(F)_l\to \bigoplus_{\widehat\sigma\in G/G_v} K_{2n-1}(k_{\sigma(v)})_l$ is the $G\cdot v$--supported boundary map then,
after the identification \eqref{identify-F} for $m:=2n-1$, we have
$$\partial_F(\Lambda_v(\xi))=\partial_{F,G\cdot v}(\Lambda_v(\xi))=\xi^{{l^{v_l(n)}\cdot\Theta_n({\bf b}, {\bf f})}},$$
for all $\xi\in K_{2n-1}(k_v)_l$.
\end{enumerate}
\end{theorem}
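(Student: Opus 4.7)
For claim (1), since $K_{2n-1}(k_v)_l$ is cyclic with generator $\xi_v$ and $\Lambda_v$ is $\zl$--linear by construction, it suffices to check that $\sigma\cdot\Lambda_v(\xi_v)=\Lambda_v(\sigma\cdot\xi_v)$ for each $\sigma\in G_v$. The natural $G_v$--action on $K_{2n-1}(k_v)_l$ is by the $n$--th power of the cyclotomic character, so $\sigma\cdot\xi_v=\xi_v^{\omega(\sigma)^n}$ (well--defined modulo $l^k$ because $l^k\mid q_v^n-1$); the task therefore reduces to checking that $\sigma$ acts on $\Lambda_v(\xi_v)\in K_{2n}(F)_l$ by the same scalar. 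Choose a lift $\tilde\sigma\in\Gamma_w$ of $\sigma$, which exists because $\Gamma_w\twoheadrightarrow G_v$. Galois equivariance of the transfer gives
\[
\sigma\cdot\Lambda_v(\xi_v)=Tr_{E/F}\bigl(\tilde\sigma(\lambda^\ast_{{\bf b}, {\bf f}_E^\ast}(w)\ast b(\beta(\xi_{l^k})^{\ast n}))\bigr)^{\gamma_l}.
\]
Since $\lambda^\ast_{{\bf b}, {\bf f}_E^\ast}$ is $\zl[\Gamma]$--linear and $\tilde\sigma$ fixes $w$, we have $\tilde\sigma(\lambda^\ast_{{\bf b}, {\bf f}_E^\ast}(w))=\lambda^\ast_{{\bf b}, {\bf f}_E^\ast}(w)$, while the identity $\beta(\xi_{l^k}^a)=a\,\beta(\xi_{l^k})$ (linearity of the Bott construction) together with the naturality of the Bockstein yields $\tilde\sigma(b(\beta(\xi_{l^k})^{\ast n}))=\omega(\tilde\sigma)^n\cdot b(\beta(\xi_{l^k})^{\ast n})$. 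Pulling the scalar $\omega(\tilde\sigma)^n$ through the cup product, the transfer and the $\gamma_l$--exponent gives $\sigma\cdot\Lambda_v(\xi_v)=\omega(\sigma)^n\cdot\Lambda_v(\xi_v)$, as required.

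\medskip

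For claim (2), I apply $\partial_F$ to the definition and use that $\partial_F$ is $\zl[G]$--linear, hence commutes with the $\gamma_l$--exponent. The task reduces to computing $\partial_F(Tr_{E/F}(\lambda^\ast_{{\bf b}, {\bf f}_E^\ast}(w)\ast y))$, where $y:=b(\beta(\xi_{l^k})^{\ast n})$ lifts to $K_{2n-1}(\CO_E)[l^k]$ (using $v\nmid l$). I invoke in turn the standard transfer--boundary compatibility $\partial_F\circ Tr_{E/F}=\sum_{w'|v'}Tr_{k_{w'}/k_{v'}}\circ\partial_{E,w'}$; the product rule $\partial_{E,w'}(u\ast y)={\rm ord}_{w'}(u)\cdot(y|_{k_{w'}})$, valid for $u\in E^\times$ and $y$ integral at $w'$; and the divisor equality \eqref{extension}, which identifies ${\rm ord}_{w'}(\lambda^\ast_{{\bf b}, {\bf f}_E^\ast}(w))$ with the appropriate coefficient of $\Theta^E_0({\bf b},{\bf f}^\ast_E)$. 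Summing over the primes $w'$ of $E$ above $v$ and tracking the $\omega^n$--twisted Galois transformation law of $y$ just as in claim (1), the outcome collapses to $Tr_{k_w/k_v}(y|_{k_w})$ raised to an exponent obtained from $\Theta^E_0({\bf b},{\bf f}^\ast_E)$ by first applying the twist operator $t_n\colon\sigma\mapsto\omega(\sigma)^{-n}\sigma$ and then restricting from $\Gamma$ to $G$.

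\medskip

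The final step identifies this twisted exponent with $\Theta_n({\bf b},{\bf f})$. Since $E\supseteq F(\mu_{l^k})$ one verifies that $l^k\mid w_n(E)_l$, so Theorem \ref{Deligne-Ribet-congruences} yields $t_n(\widehat{\Theta^E_0({\bf b},{\bf f}^\ast_E)})=\widehat{\Theta^E_n({\bf b},{\bf f}^\ast_E)}$ modulo $l^k$, which is precisely the modulus controlling the order of $Tr_{k_w/k_v}(y|_{k_w})$. Equation \eqref{restriction-theta} then gives ${\rm Res}_{E/F}(\Theta^E_n({\bf b},{\bf f}^\ast_E))=\Theta_n({\bf b},{\bf f})\cdot\gamma_l^{-1}$, so the exterior $\gamma_l$--power cancels the $\gamma_l^{-1}$ to leave $\Theta_n({\bf b},{\bf f})$ as the exponent. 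Combining with the identity $\xi_v^{l^{v_l(n)}}=Tr_{w/v}(y|_{k_w})$ from Definition \ref{picking xi_v} produces the claimed formula $\partial_F(\Lambda_v(\xi_v))=\xi_v^{l^{v_l(n)}\cdot\Theta_n({\bf b},{\bf f})}$. I expect the trickiest bookkeeping to be the interaction of the $\omega^n$--twist with the restriction $\Gamma\to G$ and the Deligne--Ribet congruences, ensuring that these three ingredients conspire to yield exactly the exponent $l^{v_l(n)}\cdot\Theta_n({\bf b},{\bf f})$ rather than some shifted or scaled variant.
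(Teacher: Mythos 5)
Your proof follows essentially the same route as the paper's: for part (1) you decompose the action of $\sigma\in G_v$ via a lift in $\Gamma_w$, observe that $\lambda^\ast_{{\bf b},{\bf f}_E^\ast}(w)$ is fixed while the Bockstein–Bott element picks up the $n$-th power cyclotomic twist, and match scalars (the paper makes the Frobenius–inertia decomposition $\sigma=\sigma_v^\alpha\rho$ explicit, but the content is identical); for part (2) you invoke the same three ingredients the paper uses — the transfer/boundary compatibility and product rule for $\partial$ (diagrams from \cite{Ba1}), the divisor equality \eqref{extension} identifying the boundary of $\lambda^\ast_{{\bf b},{\bf f}_E^\ast}(w)$ with $\Theta_0^E({\bf b},{\bf f}_E^\ast)$, the Deligne–Ribet congruence $t_n(\widehat{\Theta_0^E})=\widehat{\Theta_n^E}$ together with $l^k\mid w_n(E)_l$, and then \eqref{restriction-theta} and \eqref{define-xiv} to land on $\xi_v^{l^{v_l(n)}\Theta_n({\bf b},{\bf f})}$. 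The argument is correct.
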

\begin{proof}
(1) Let $\sigma\in G_v$. Assume that $\sigma=\sigma_v^\alpha\cdot\rho$, where $\sigma_v$ is a $v$--Frobenius lift (from $G_v/I_v$ to $G_v$),
$\alpha\in\Z_{\geq 0}$ and $\rho\in I_v$. Since $\Lambda_v$ is $\zl$--linear, we have
$$\Lambda_v(\sigma(\xi_v))=\Lambda_v(\sigma_v^\alpha(\xi_v))=\Lambda_v(\xi_v^{q_v^{n\alpha}})=\Lambda_v(\xi_v)^{q_v^{n\alpha}}.$$
Now, let us consider a lift $\overline\sigma\in\Gamma_w$ of $\sigma$ of the form $\overline\sigma=\sigma_w^\alpha\cdot\overline\rho$,
where $\sigma_w$ is a $w$--Frobenius lift in $\Gamma_w$ which restricts to $\sigma_v$ and $\overline\rho\in I_w$ restricts to $\rho$. We have
$$\overline\sigma(\lambda^\ast_{{\bf b}, {\bf f}_E^\ast}(w))=\lambda^\ast_{{\bf b}, {\bf f}_E^\ast}(\overline\sigma(w))=\lambda^\ast_{{\bf b}, {\bf f}_E^\ast}(w).$$
Also, since $v\nmid l$, we have $\overline{\sigma}(\xi_{l^k})=\sigma_w^\alpha(\xi_{l^k})=\xi_{l^k}^{q_v^\alpha}$. Consequently, the functoriality of the Bockstein
and Bott maps $b$ and $\beta$, respectively, gives
$$\overline\sigma(b(\beta(\xi_{l^k})^{\ast n}))=b(\beta(\xi_{l^k})^{\ast n})^{q_v^{n\alpha}}.$$
The last two displayed equalities imply that
\begin{eqnarray}
% \nonumber to remove numbering (before each equation)
\nonumber  {\sigma}(\Lambda_v(\xi_v)) &=& \sigma\circ Tr_{E/F}(\lambda^\ast_{{\bf b}, {\bf f}_E^\ast}(w)\ast b(\beta(\xi_{l^k})^{\ast n}))^{\gamma_l}= \\
\nonumber   Tr_{E/F}\circ\overline\sigma(\lambda^\ast_{{\bf b}, {\bf f}_E^\ast}(w)\ast b(\beta(\xi_{l^k})^{\ast n}))^{\gamma_l} &=& Tr_{E/F}(\overline\sigma(\lambda^\ast_{{\bf b}, {\bf f}_E^\ast}(w))\ast \overline\sigma(b(\beta(\xi_{l^k})^{\ast n})))^{\gamma_l}= \\
\nonumber   Tr_{E/F}(\lambda^\ast_{{\bf b}, {\bf f}_E^\ast}(w)\ast b(\beta(\xi_{l^k})^{\ast n}))^{q_v^{n\alpha}\gamma_l}&=&\Lambda_v(\xi_v)^{q_v^{n\alpha}}.
\end{eqnarray}
Consequently, $\Lambda_v(\sigma(\xi_v))=\sigma(\Lambda_v(\xi_v))$, which concludes the proof of (1).
\medskip

(2) Note that since $K_{2n-1}(k_v)_l$ is generated by $\xi_v$ and $\Lambda_v$ is $\zl[G_v]$--linear, it suffices to prove (2) for $\xi:=\xi_v$. For that purpose, we need the following commutative diagrams in the category of $\Z[\Gamma]$--modules. The first of these is diagram 4.7 in \cite{Ba1}:
\begin{equation}\label{diagram1}\xymatrix{
K_1(E)\times K_{2n-1}(O_E)\ar[d]^{\oplus_{\bf w}(\partial_{E,{\bf w}}\times{\rm red}_{\bf w})}\ar[r]^{\qquad\ast} &K_{2n}(O_E)\ar[d]^{\partial_E}\\
\bigoplus_{\bf w}(K_0(k_{\bf w})\times K_{2n-1}(k_{\bf w}))\ar[r]^{\qquad\ast} &\bigoplus_{\bf w} K_{2n-1}(k_{\bf w}).
}\end{equation}
 Above, $\bf w$ runs through all nonzero $\CO_E$--primes and ${\rm red}_{\bf w}: K_{2n-1}(O_E)\to K_{2n-1}(k_{\bf w})$ is the map at the level of $K$--groups induced
by the reduction modulo ${\bf w}$ morphism $O_E\to O_E/{\bf w}=k_{\bf w}$. In particular, note that the functoriality of the Bockstein
and Bott maps $b$ and $\beta$ implies that we have
\begin{equation}\label{reduce-Bockstein}{\rm red}_w(b_E(\beta(\xi_{l^k})^{\ast n}))= b_w(\beta(\xi_{l^k})^{\ast n}), \end{equation}
where $w$ is the chosen $\CO_E$--prime sitting above the chosen $\CO_F$--prime $v$.

The second commutative diagram is diagram 4.1 in \cite{Ba1}:
\begin{equation}\label{diagram2}\xymatrix{
K_{2n}(E)\ar[r]^{\partial_E\qquad}\ar[d]^{Tr_{E/F}} &\bigoplus_{\bf v}(\bigoplus_{{\bf w}|{\bf v}} K_{2n-1}(k_{\bf w}))\ar[d]^{\bigoplus_{\bf v}(\prod_{\bf w\mid v}Tr_{\bf w/v})=:\overline{Tr}}\\
K_{2n}(F)\ar[r]^{\partial_F\qquad} &\bigoplus_{\bf v} K_{2n-1}(k_{\bf v})}\end{equation}
where ${\bf v}$ runs through all nonzero $\CO_F$--primes.

Diagram \eqref{diagram2} above yields the equality
\begin{equation}\label{first-step}\partial_F(\Lambda_v(\xi_v))=\overline{Tr}(\partial_E(\lambda^\ast_{{\bf b}, {\bf f}_E^\ast}(w) \ast b_E(\beta(\xi_{l^k})^{\ast n})))^{\gamma_l}.\end{equation}
 Since $\mu_{l^k}\subseteq E^\times$, we have
$l^k\mid w_n(E)_l$. This observation combined with the functoriality of the maps $b$ and $\beta$ lead to the following equality in $\bigoplus_{\widetilde\gamma\in\Gamma/\Gamma_w}K_{2n-1}(k_{\gamma(w)})[l^k]$.
$$b_{\gamma (w)}(\beta(\xi_{l^k})^{\ast n})= b_{w}(\beta(\xi_{l^k})^{\ast n})^{t_n(\gamma)}, \qquad\text{ for all }\gamma\in\Gamma,$$
where $t_n:\zl/w_n(E)_l[\Gamma]\simeq\zl/w_n(E)_l[\Gamma]$ is the map defined right before Theorem \ref{Deligne-Ribet-congruences}.
Consequently,  \eqref{diagram1} above, combined with equalities \eqref{boundary-lambda} and \eqref{reduce-Bockstein} yield
$$\partial_E(\lambda^\ast_{{\bf b}, {\bf f}_E^\ast}(w)\ast b_E(\beta(\xi_{l^k})^{\ast n}))=\{b_w(\beta(\xi_{l^k})^{\ast n})\}^{t_n(\widehat{\Theta_0^E({\bf b}, {\bf f}_E^\ast)})},$$
where the notations are as in Theorem \ref{Deligne-Ribet-congruences}. Now, Theorem \ref{Deligne-Ribet-congruences} implies that we have
$$t_n(\widehat{\Theta_0^E({\bf b}, {\bf f}_E^\ast)})= \widehat{\Theta_n^E({\bf b}, {\bf f}_E^\ast)} \text{ in }\zl/w_n(E)_l[\Gamma].$$
Consequently, the last two displayed equalities imply that
$$\partial_E(\lambda^\ast_{{\bf b}, {\bf f}_E^\ast}(w)\ast b_E(\beta(\xi_{l^k})^{\ast n}))=\{b_w(\beta(\xi_{l^k})^{\ast n})\}^{\Theta_n^E({\bf b}, {\bf f}_E^\ast)}.$$
Now, combine the last equality successively with \eqref{first-step}, \eqref{define-xiv} and \eqref{restriction-theta} to obtain
$$\partial_F(\Lambda_v(\xi_v))=Tr_{w/v}(b_w(\beta(\xi_{l^k})^{\ast n}))^{\gamma_l\cdot\Theta_n({\bf b}, {\bf f})\cdot\gamma_l^{-1}}=\xi_v^{l^{v_l(n)}\cdot\Theta_n({\bf b}, {\bf f})}.$$
This concludes the proof of the theorem.
\end{proof}

\subsection{Constructing a map $\Lambda$.} For every nonzero $\CO_K$--prime $v_0$, we pick an $\CO_F$--prime $v\mid v_0$.
The isomorphisms \eqref{identify-F} for $m:=2n-1$ yield an explicit isomorphism of $\zl[G]$--modules
$$\bigoplus_{v'}K_{2n-1}(k_{v'})_l\simeq\bigoplus_{v_0}\left (\zl[G]\otimes_{\zl[G_v]}K_{2n-1}(k_v)_l\right ),$$
where $v'$ runs over all the nonzero $\CO_F$--primes and $v_0$ runs over all the nonzero $\CO_K$--primes.
For each chosen $v$, we construct a map $\Lambda_v: K_{2n-1}(k_v)_l\to K_{2n-1}(F)_l$ as in the previous section.
Since each of these maps $\Lambda_v$ is $\zl[G_v]$--linear (see Theorem \ref{local-lambda-theorem} (2)), the isomorphism
above implies that there exists a unique $\zl[G]$--linear map
$$\Lambda: \bigoplus_{v'}K_{2n-1}(k_{v'})_l\to K_{2n}(F)_l$$
which equals $\Lambda_v$ when restricted to $K_{2n-1}(k_v)_l$ for each of the chosen primes $v$.
Obviously, after identifying the two sides of the isomorphism above, this unique $\zl[G]$--linear map is given by
\begin{equation}\label{lambda-definition}\Lambda:=\prod_{v_0}(1\otimes\Lambda_v).\end{equation}
\begin{theorem}\label{lambda-properties} The map $\Lambda$ defined above satisfies the following.
\begin{enumerate}
\item It is $\zl[G]$--linear.
\item For all $\xi\in\bigoplus_{v'}K_{2n-1}(k_{v'})_l$, we have
$$(\partial_F\circ\Lambda)(\xi)=\xi^{l^{v_l(n)}\cdot\Theta_n({\bf b}, {\bf f})}.$$
\end{enumerate}
\end{theorem}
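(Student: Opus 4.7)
The plan is as follows. Part (1) is essentially tautological. The identification
$$\bigoplus_{v'}K_{2n-1}(k_{v'})_l \;\simeq\; \bigoplus_{v_0}\zl[G]\otimes_{\zl[G_v]}K_{2n-1}(k_v)_l$$
is an isomorphism of $\zl[G]$--modules, and each $\Lambda_v$ is $\zl[G_v]$--linear by Theorem \ref{local-lambda-theorem}(1). Consequently the universal property of induction produces a unique $\zl[G]$--linear map extending the collection $\{\Lambda_v\}$, and this is precisely $\Lambda = \prod_{v_0}(1\otimes\Lambda_v)$ as defined in \eqref{lambda-definition}.

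For part (2), my strategy is to reduce to the local identity already established in Theorem \ref{local-lambda-theorem}(2) by using $\zl[G]$--linearity on both sides. Two standard compatibilities do the work. First, the Quillen boundary $\partial_F$ of \eqref{Quillen localization sequence} is $\zl[G]$--equivariant, since the $G$--action on $F$ permutes the residue fields $k_{v'}$ and $\partial_F$ is functorial for this action. Second, the element $l^{v_l(n)}\cdot\Theta_n({\bf b},{\bf f})$ lies in the commutative ring $\zl[G]$, so the operation $\xi\mapsto \xi^{l^{v_l(n)}\cdot\Theta_n({\bf b},{\bf f})}$ is itself $\zl[G]$--linear. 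Combined with (1), both $\partial_F\circ\Lambda$ and this exponentiation map are $\zl[G]$--linear endomorphisms of $\bigoplus_{v'}K_{2n-1}(k_{v'})_l$.

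It therefore suffices to check the identity on a $\zl[G]$--generating set. Under the decomposition above, such a set is furnished by the chosen generators $\xi_v\in K_{2n-1}(k_v)_l$ (one for each chosen $v$ lying over $v_0$), embedded via the identity component of $\zl[G]\otimes_{\zl[G_v]}K_{2n-1}(k_v)_l$. On each such $\xi_v$ one has $\Lambda(\xi_v)=\Lambda_v(\xi_v)$, and $\partial_F(\Lambda_v(\xi_v))$ is supported on the $G\cdot v$--piece, where it equals $\xi_v^{l^{v_l(n)}\cdot\Theta_n({\bf b},{\bf f})}$ by Theorem \ref{local-lambda-theorem}(2). The only bookkeeping subtlety — and the thing I would be most careful about — is verifying that \eqref{identify-F} intertwines the natural $G$--action on $\bigoplus_{v'}K_{2n-1}(k_{v'})_l$ with the induced--module action on the right, and that the exponent $\Theta_n({\bf b},{\bf f})$ acts componentwise in the same way on both sides of the claimed equality. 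Once this identification is made explicit, part (2) follows immediately by assembling the local equalities from Theorem \ref{local-lambda-theorem}(2) across the chosen primes $v$.
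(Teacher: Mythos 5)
Your proposal is correct and takes essentially the same route as the paper: the paper's proof is just the terse statement that both parts follow directly from the defining formula \eqref{lambda-definition} together with Theorem \ref{local-lambda-theorem}, and you have simply made explicit the (standard) reasoning underneath — that the local maps are $\zl[G_v]$--linear, that $\partial_F$ and exponentiation by $l^{v_l(n)}\Theta_n({\bf b},{\bf f})$ are $\zl[G]$--equivariant, and that the identifications \eqref{identify-F} turn the source into a direct sum of induced modules so the local identities of Theorem \ref{local-lambda-theorem}(2) propagate by $\zl[G]$--linearity. Nothing is missing; your "bookkeeping subtlety" is exactly what \eqref{identify-F} is designed to handle.
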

\begin{proof} Part (1) is a direct consequence of \eqref{lambda-definition} and Theorem \ref{local-lambda-theorem}(1).
Part (2) is a direct consequence of \eqref{lambda-definition} and Theorem \ref{local-lambda-theorem}(2).
\end{proof}

\section{The Stickelberger splitting map and the divisible elements}

In this section, we will use the special elements in the even $K$--groups of a totally real or CM number field constructed in the previous section
to investigate the Galois module structure of the groups of divisible elements in these $K$--groups. In particular, this will lead to
 a reformulation and a possible generalization of a classical conjecture of Iwasawa.
\medskip

The following technical lemma extends the computations in \cite[p. 8-9]{BG2} and will be needed shortly.

\begin{lemma}
Let $L/F$ be a Galois extension of number fields. Let $L^H$ be the Hilbert class field of $L$.
Let $l$ be a prime such that $L^{H} \cap L (\mu_{l^{\infty}}) = L.$
Let ${\bf p} \subset \mathcal{O}_L$ be a prime. For any positive integer $m>>0$
there exist infinitely many primes ${\bf q} \subset \mathcal{O}_L$, with ${\bf q}\nmid l$, such that:
\begin{enumerate}
\item $[{\bf q}] = [{\bf p}]$ in  $Cl (\mathcal{O}_L).$
\item $l^m\mid\mid N{\bf q}-1$.
\item ${\bf q}\cap O_F$ splits in $L/F$.
\end{enumerate}
\label{HilbertChebotarev}\end{lemma}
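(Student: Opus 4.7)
The plan is to apply the Chebotarev density theorem to a single Galois extension of $F$ that simultaneously records the three pieces of data: the class of $\mathbf{q}$ in $Cl(\CO_L)$ for (1), the residue of $N\mathbf{q}$ modulo $l^{m+1}$ for (2), and the splitting of $\mathbf{q}\cap\CO_F$ in $L/F$ for (3).

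First I set $M := L^{H}\cdot L(\mu_{l^{m+1}})$. Both $L^{H}$ and $L(\mu_{l^{m+1}})$ are Galois over $F$ (the first because $L/F$ is Galois and $L^{H}$ is intrinsic to $L$, the second manifestly), hence so is $M$, and $G(M/L)$ is normal in $G(M/F)$. The hypothesis $L^{H}\cap L(\mu_{l^{\infty}})=L$ specializes to $L^{H}\cap L(\mu_{l^{m+1}})=L$, so restriction furnishes the product decomposition
$$G(M/L)\;\simeq\;G(L^{H}/L)\,\times\,G(L(\mu_{l^{m+1}})/L),$$
whose first factor is identified with $Cl(\CO_L)$ via Artin reciprocity and whose second embeds into $(\Z/l^{m+1})^{\times}$ via the cyclotomic character $\omega$.

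Next I fix the target Frobenius. Let $\sigma_{\mathbf{p}}\in G(L^{H}/L)$ denote the Artin symbol of $[\mathbf{p}]$. Since $G(L(\mu_{l^{\infty}})/L)$ is an open subgroup of $\zl^{\times}$ of finite index $[L\cap\Q(\mu_{l^{\infty}}):\Q]$, for all $m$ past its $l$-adic conductor one can choose $\tau\in G(L(\mu_{l^{m+1}})/L)$ with $\omega(\tau)\equiv 1\pmod{l^{m}}$ but $\omega(\tau)\not\equiv 1\pmod{l^{m+1}}$. Put $g:=(\sigma_{\mathbf{p}},\tau)\in G(M/L)\subset G(M/F)$; its $G(M/F)$-conjugacy class $C$ lies inside $G(M/L)$ by normality. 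Chebotarev then yields infinitely many $\CO_F$-primes $\mathbf{p}_0$, unramified in $M/F$, whose Frobenius class in $G(M/F)$ equals $C$. For each such $\mathbf{p}_0$ I pick a prime $\mathbf{Q}\subset\CO_M$ above $\mathbf{p}_0$ with $\mathrm{Frob}_{\mathbf{Q}/\mathbf{p}_0}=g$ and set $\mathbf{q}:=\mathbf{Q}\cap\CO_L$.

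Verification of (1)--(3) is then bookkeeping. The containment $g\in G(M/L)$ forces the image of $\mathrm{Frob}_{\mathbf{Q}/\mathbf{p}_0}$ in $G(L/F)$ to be trivial, so $\mathbf{p}_0$ splits completely in $L/F$, giving (3). The projection of $g$ to $G(L^{H}/L)$ is $\sigma_{\mathbf{p}}$, so $[\mathbf{q}]=[\mathbf{p}]$ in $Cl(\CO_L)$ by Artin reciprocity, which is (1). The projection of $g$ to $G(L(\mu_{l^{m+1}})/L)$ is $\tau$, so $N\mathbf{q}=N\mathbf{p}_0\equiv\omega(\tau)\pmod{l^{m+1}}$, yielding (2); and $\mathbf{q}\nmid l$ follows at once from $l\mid N\mathbf{q}-1$. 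I expect the only genuinely subtle point to be the ``$m\gg 0$'' threshold in the choice of $\tau$: it must exceed the $l$-adic conductor of $G(L(\mu_{l^{\infty}})/L)\subset\zl^{\times}$, which is precisely what the hypothesis absorbs. Everything else is mechanical once the product decomposition of $G(M/L)$ is in hand.
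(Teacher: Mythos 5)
Your proof is correct and follows essentially the same line as the paper's: form the compositum $M = L^H \cdot L(\mu_{l^{m+1}})$, use the hypothesis $L^H \cap L(\mu_{l^{m+1}}) = L$ to split $G(M/L)$ as a direct product, choose an element $g=(\sigma_{\mathbf p},\tau)$ whose two coordinates encode conditions (1) and (2) and whose membership in $G(M/L)$ encodes condition (3), and apply Chebotarev to $M/F$. The paper phrases this slightly differently, specifying $\sigma$ by its restrictions to $L^H$ and $L(\mu_{l^{m+1}})$ rather than writing out the product decomposition, and it notes that $\sigma$ lies in the abelian group $G(M/L)$ so a Frobenius element rather than a conjugacy class can be targeted; your version keeps track of the full conjugacy class in the possibly non-abelian $G(M/F)$ and then normalizes the choice of prime above $\mathbf{p}_0$, which amounts to the same thing. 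Both treatments hinge on the same "$m\gg 0$" threshold ensuring $L(\mu_{l^m})\ne L(\mu_{l^{m+1}})$ so that a $\tau$ with $l^m\mid\mid(\omega(\tau)-1)$ exists.
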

\begin{proof}
%$$\xymatrix{
%L (\mu_{l^{m}})  \quad \ar[r]^{\quad \quad }  &  \quad L (\mu_{l^{m}}) L^{H}    \\
%L \quad \ar@<0.1ex>[u]^{{\rm{Id}}}
%\ar[r]^{\qquad Fr_{{\bf p}}\qquad } &  \quad  L^{H}  \ar@<0.1ex>[u]^{}\,  }
%\label{HilbertMuInfty}$$
Let $m\in\Z_{>0}$ sufficiently large so that $L(\mu_{l^m})\ne L(\mu_{l^{m+1}})$. Since $L(\mu_{l^{m+1}})\cap L^H=L$, there exists a unique
$\sigma \in  G(L (\mu_{l^{m+1}}) L^{H}/F)$ such that $\sigma_{| L^{H}} = Fr_{{\bf p}}$,
$\sigma_{| L (\mu_{l^{m+1}})}$ is a generator of $G(L(\mu_{l^{m+1}})/L(\mu_{l^m}))$, where  $Fr_{{\bf p}}$ is the Frobenius automorphism associated to ${\bf p}$ in $G(L^H/L)$.
Note that, by definition $\sigma_{|L}={\rm Id}$ and therefore $\sigma\in G(G(L (\mu_{l^{m+1}}) L^{H}/L)$, which is an abelian Galois group.

By Chebotarev's density theorem, there are infinitely many
primes $q \subset \mathcal{O}_K$ such that ${q}\nmid l$ and $\sigma = \widetilde{Fr}_{q},$ where
$\widetilde{Fr}_{q}$ is (any) Frobenius automorphism associated to ${q}$ in $G(L(\mu_{l^{m+1}}) L^{H}/F).$
Let ${\bf q}$ be a prime in $O_L$ sitting above $q$.

Since $\sigma_{| L}={\rm Id}$, $q={\bf q}\cap O_F$ splits completely in $L/F$.

Since ${\widetilde{Fr}_{{\bf q}}} \, |_{L (\mu_{l^{m+1}})}$ is a generator of $G(L(\mu_{l^{m+1}})/L(\mu_{l^m}))$, we have
$$\widetilde{Fr_{\bf q}}(\xi)=\xi^{N{\bf q}}=1, \qquad \text{ for all }\xi\in\mu_{l^m},$$
and $\widetilde{Fr_{\bf q}}(\xi)=\xi^{N{\bf q}}\ne 1$ for a generator $\xi$ of $\mu_{l^{m+1}}$. Consequently, $l^m\mid\mid N{\bf q}-1$.

Finally, if we denote by $Fr_{{\bf q}}$ the Frobenius morphism associated to ${\bf q}$ in $G(L^H/L)$, we have $Fr_{{\bf q}}={\widetilde{Fr}_{{\bf q}}} \, |_{L^H} = Fr_{{\bf p}}$.
Consequently, Artin's reciprocity isomorphism
$$Cl (\mathcal{O}_L) \rightarrow G(L^H/L),\qquad [{\bf a}] \rightarrow Fr_{\bf a},$$
implies that $[{\bf p}]=[{\bf q}]$ in $Cl (\mathcal{O}_L)$.
\end{proof}
\bigskip

We work with the notations and under the assumptions of the previous section. In addition, we will
assume from now on that the odd prime $l$ does not divide the order $|G|$ of the Galois group $G:=G(F/K)$. We denote by $\widehat G(\overline\Q_l)$ the set
of irreducible $\overline\Q_l$--valued characters of $G$. For $\chi\in \widehat G(\overline\Q_l)$ we let $$e_\chi:=1/|G|\sum_{g\in G}\chi(g)\cdot g^{-1}$$ denote its
associated idempotent element in $\zl[\chi][G]$. Also, we will let
$$\widetilde{e_\chi}:=\sum_{\sigma\in G(\overline\Q_l/\Q_l)}e_{\chi^\sigma}.$$
Note that $\widetilde{e_\chi}$ is the irreducible idempotent in $\zl[G]$ associated to the irreducible $\Q_l$--valued
character $\widetilde\chi=\sum_{\sigma\in G(\overline\Q_l/\Q_l)}{\chi^\sigma}$ of $G$. Also, note that $\widetilde{e_\chi}$ only depends on
the orbit of $\chi$ under the natural action of $G({\Q_l}/\Q_l)$ on $\widehat G(\overline \Q_l)$. In what follows, we denote the set of such orbits
by $\widehat G(\Q_l)$ and think of $\widetilde\chi$ as an element of $\widehat G(\Q_l)$, for every $\chi\in\widehat G(\overline\Q_l)$. Obviously, we have
$$\zl[G]=\bigoplus_{\widetilde\chi\in\widehat G(\Q_l)}\widetilde{e_\chi}\zl[G], \qquad \widetilde{e_\chi}\zl[G]\simeq \zl[\chi],$$
where the ring isomorphism above sends $x\to\chi(x)$, for every $x\in \widetilde{e_\chi}\zl[G]$ and $\chi\in\widehat G(\overline\Q_l)$. Also, for every $\zl[G]$--module
$M$ we have
\begin{equation}M=\bigoplus_{\widetilde\chi\in\widehat G(\Q_l)}\widetilde{e_\chi}M,\label{decompose into chi components}\end{equation}
where $\widetilde{e_\chi}M$ is a $\widetilde{e_\chi}\zl[G]$--module in the obvious manner. From now on, we denote
$$M^\chi:=\widetilde{e_\chi}M$$
and view it as a $\zl[\chi]$--module via the ring isomorphism  $\widetilde{e_\chi}\zl[G]\simeq \zl[\chi]$ described above.
Obviously, $M\to M^\chi$ are exact functors from the category of $\zl[G]$--modules to that of $\zl[\chi]$--modules.
If $f:M\to N$ is a morphism of $\zl[G]$--modules, then $f^\chi: M^\chi\to N^\chi$ denotes its image
via the above functor. Also, if $M$ is as above and $x\in M$, then
$x^\chi:=\widetilde{e_\chi}\cdot x$ will be viewed as an element in $M^\chi$. In particular, if $x\in\zl[G]$, then we identify $x^\chi$ with $
\chi(x)\in\zl[\chi]$ via the ring isomorphism $\widetilde{e_\chi}\zl[G]\simeq \zl[\chi]$ described above.
\medskip

From now on, we fix an embedding $\Bbb C\hookrightarrow \Bbb C_l$. As a consequence, the higher Stickelberger elements
$\Theta_{n} ({\bf b}, {\bf f})$ will be viewed in $\zl[G]\subseteq \C_l[G]$. Also, this embedding
identifies $\widehat G(\C)$ and $\widehat G(\overline\Q_l)$. Therefore, Remark \ref{L-function-theta} and the conventions made above give
the following equality in $\zl[\chi]$, for all $\chi\in\widehat G(\overline\Q_l)$:
\begin{equation}\label{expressing Stickeleberger via L functions 3}\Theta_{n} ({\bf b}, {\bf f})^\chi=(1 - N {\bf b}^{n+1}\cdot{\chi}(\sigma_{\bf b})^{-1})\cdot L_{\bf f}({\chi}^{-1}, -n).
\end{equation}
\medskip

Let us fix $\chi\in\widehat G(\overline\Q_l)$ and $n\geq 1$. Consider the $\chi$ component of the Quillen localization sequence
(\ref{Quillen localization sequence}):
\begin{equation}
0 \rightarrow K_{2n} (\mathcal{O}_F)_{l}^{\chi} \rightarrow
K_{2n} (F)_{l}^{\chi} \stackrel{\partial_{F}^{\chi}}{\longrightarrow}
\bigoplus_{v_0} \bigl(\bigoplus_{v | v_0} K_{2n-1} (k_v)_l\bigr)^{\chi} \rightarrow 0.
\label{chi part of the Quillen localization sequence}\end{equation}
Observe that since $\bigoplus_{v | v_0} K_{2n-1} (k_v)_l$ is a cyclic
$\Z_l[G]$-module generated by $\xi_v$ (see \eqref{identify-F} in the previous section), $\bigl(\bigoplus_{v | v_0} K_{2n-1} (k_v)_l\bigr)^\chi$
is a cyclic $\zl[\chi]$--module generated by $\xi_v^\chi$. It is easily seen that since $K_{2n-1} (k_v)_l\simeq \Z/l^k$ (with notations as in the previous section), isomorphisms \eqref{identify-F} imply that
\begin{equation}\label{order of xivchi} {\rm ord}(\xi_v^\chi)=l^k, \text{ whenever }\xi_v^\chi\ne 1.\end{equation}
Now, since the map $\Lambda$ constructed in the previous section
is $\zl[G]$--equivariant, Theorem \ref{lambda-properties} implies that we have
\begin{equation}
\partial_{F}^{\chi} \circ \Lambda^{\chi}  \, (\xi) =
\xi^{l^{v_{l} (n)} \Theta_{n} ({\bf b}, {\bf f})^{\chi}},\qquad\text{ for all }\xi\in \bigoplus_{v_0}\bigl(\bigoplus_{v | v_0} K_{2n-1} (k_v)_l\bigr)^\chi.
\label{chi part theoremStickelbergerElementsProperty1}
\end{equation}
\medskip

\begin{definition} If $A$ is an abelian group, we denote by $div(A)$ its subgroup of divisible elements.
In other words, we let
$$div(A)=\bigcap_{r\geq 1} A^r.$$
\end{definition}

Let $D (n) := div (K_{2n} (F))$. It is easy to see that we have
\begin{equation} D(n)_l^\chi=div(K_{2n}(F)_l^\chi),\label{chi part of div is div of chi part}\end{equation}
for all $\chi$ as above. Also, observe that (\ref{chi part of the Quillen localization sequence}) combined with the finiteness
of the Quillen $K$--groups of finite fields implies right away that for all $\chi$ and $n$ we have
\begin{equation}
D (n)^\chi_{l} \subset K_{2n} (\mathcal{O}_F)_{l}^{\chi}.
\label{Dnchi subset k2nOF}
\end{equation}
\medskip

\noindent {\bf Simplifying Hypothesis:} {\it From now on we assume that all primes above $l$ are ramified in $F(\mu_l)/K$ and totally
ramified in $F(\mu_{l^\infty})/F(\mu_l).$}
\medskip

\begin{theorem}[]\label{divisible elements via Lambda}
Let $\chi \in \widehat G(\C)$ and assume that $v_l(n)=0$ and
$\Theta_{n} ({\bf b}, {\bf f})^{\chi} \not= 0.$ Then
\begin{equation}
K_{2n} (\mathcal{O}_F)_{l}^{\chi} \cap \,\, {\rm{Im}}(\Lambda^{\chi}) = D (n)^\chi_{l}.
\label{divisible elements via Lambda 1}
\end{equation}
In particular, if  $v_l(n)=0$ and $\Theta_{n} ({\bf b}, {\bf f})^{\chi} \not= 0$
for all $\chi \in \widehat G(\C)$, then
\begin{equation}
K_{2n} (\mathcal{O}_F)_{l}  \cap \,\, {\rm{Im}}(\Lambda) = D (n)_{l}.
\label{divisible elements via Lambda 3}
\end{equation}
\end{theorem}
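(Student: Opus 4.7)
Let me lay out a plan. Set $A := K_{2n}(\CO_F)_l^\chi$, $B := K_{2n}(F)_l^\chi$, and $C := \bigoplus_{v_0}\bigl(\bigoplus_{v\mid v_0} K_{2n-1}(k_v)_l\bigr)^\chi$, so the $\chi$-part of the Quillen sequence reads $0 \to A \to B \xrightarrow{\partial} C \to 0$ with $A$ finite. Since $\zl[\chi]$ is a discrete valuation ring and $\Theta_n({\bf b},{\bf f})^\chi \ne 0$, factor $\Theta_n^\chi = u \cdot l^a$ with $u \in \zl[\chi]^\times$ and $a := v_l(\Theta_n^\chi) \geq 0$, and choose $N$ with $l^N A = 0$. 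The hypothesis $v_l(n) = 0$ and Theorem~\ref{lambda-properties}(2) give $\partial \circ \Lambda^\chi = \Theta_n^\chi \cdot \mathrm{id}_C$. In particular, $\Lambda^\chi(\xi) \in A$ iff $\xi \in C[l^a]$, so $A \cap \mathrm{Im}(\Lambda^\chi) = \Lambda^\chi(C[l^a])$; the task is to identify this submodule with $D(n)_l^\chi$. Equation~\eqref{divisible elements via Lambda 3} then follows from~\eqref{divisible elements via Lambda 1} by summing over $\chi$ via~\eqref{decompose into chi components} and~\eqref{chi part of div is div of chi part}.

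\emph{Inclusion $\supseteq$ (divisible lies in image).} Take $x \in D(n)_l^\chi$, fix $k \geq N+a$, and write $x = y^{l^k}$ with $y \in B$. Then $\eta := \partial(y) \in C[l^k]$. For each prime $v$ supporting $\eta$, apply Lemma~\ref{HilbertChebotarev} (the hypothesis $v_l(n) = 0$ lets us upgrade $l^m \mid\mid Nq-1$ to $l^m \mid |K_{2n-1}(k_q)_l|$ by a standard LTE computation) to produce a prime $q \subset \CO_F$ with $[q] = [v]$ in $\mathrm{Cl}(\CO_F)$ and $|K_{2n-1}(k_q)_l| \geq l^{k+a}$. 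Writing $v q^{-1} = (f_v)$, the Steinberg-type element $Tr_{E/F}(\{f_v\} \ast b(\beta(\xi_{l^k})^{\ast n}))^{\gamma_l} \in B[l^k]$ (for $E = F(\mu_{l^k})$) has boundary (via diagrams~\eqref{diagram1}, \eqref{diagram2}) supported at $v$ and $q$, cancelling a chosen component at $v$ and installing an analogue at $q$. Scaling this element by the appropriate coefficients in $\zl[\chi]$ and summing over $v$ produces $z \in B[l^k]$, and $y' := y \cdot z^{-1}$ satisfies $(y')^{l^k} = x$ with $\eta' := \partial(y')$ now supported only at primes with residue $K$-groups of order $\geq l^{k+a}$. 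In each such cyclic $\zl[\chi]$-component, every $l^k$-torsion element is divisible by $l^a$, so $\eta' = l^a \xi'$ for some $\xi' \in C$. Set $\xi'' := u^{-1}\xi'$; then $\partial(\Lambda^\chi(\xi'')) = \Theta_n^\chi \xi'' = \eta'$, so $y' \cdot \Lambda^\chi(\xi'')^{-1} \in A$. Raising to the $l^k$-th power collapses this factor (as $k \geq N$), yielding $x = \Lambda^\chi(l^k \xi'') \in \mathrm{Im}(\Lambda^\chi)$.

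\emph{Inclusion $\subseteq$ (image lies in divisible).} Take $\xi \in C[l^a]$; we show $x := \Lambda^\chi(\xi) \in l^k B$ for every $k \geq 1$. By $\zl[\chi]$-linearity reduce to $\xi = \xi_v$ of order $l^b$, $b \leq a$, supported at a single prime $v$. Apply Lemma~\ref{HilbertChebotarev} again to pick $q$ with $[q] = [v]$ in $\mathrm{Cl}(\CO_F)$ and $|K_{2n-1}(k_q)_l| \geq l^{b+k}$, and choose $\xi_q \in K_{2n-1}(k_q)_l^\chi$ of order $l^b$ with $\xi_q = l^k \tilde\xi_q$; by linearity $\Lambda^\chi(\xi_q) = l^k \Lambda^\chi(\tilde\xi_q) \in l^k B$. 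Since both $\xi_v, \xi_q$ are annihilated by $l^a$, $\partial(\Lambda^\chi(\xi_v)\Lambda^\chi(\xi_q)^{-1}) = 0$, so the difference lies in $A$. Writing $v q^{-1} = (f_v)$ and lifting to $E = F(\mu_{l^{k_q}}) \supseteq F(\mu_{l^{k_v}})$, the Galois-equivariant Hecke character identity $\lambda^\ast_{{\bf b},{\bf f}_E^\ast}(w) \cdot \lambda^\ast_{{\bf b},{\bf f}_E^\ast}(w')^{-1} = f_v^{\Theta_0^E} \cdot \zeta$ ($w|v$, $w'|q$, $\zeta \in \mu_E$; from Corollary~\ref{GP-Hecke-characters}(ii) and Remark~\ref{norm-relations-star}) combined with the reduction relation between Bott elements at levels $l^{k_v}$ and $l^{k_q}$ expresses the correction $\Lambda^\chi(\xi_v)\Lambda^\chi(\xi_q)^{-1}$ as a trace from $E$ containing an $l^k$-factor from the divisibility $\xi_q = l^k \tilde\xi_q$ and the level-change of Bott elements. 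Hence $\Lambda^\chi(\xi_v) \in l^k B$, as required.

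\emph{Main obstacle.} The delicate step in both directions is the explicit $K$-theoretic construction: building the ``transport'' element $z \in B[l^k]$ with prescribed boundary in $\supseteq$, and analyzing the correction $\Lambda^\chi(\xi_v)\Lambda^\chi(\xi_q)^{-1}$ in $\subseteq$. Both require a careful interplay between Chebotarev density (Lemma~\ref{HilbertChebotarev}), the $\ast$-product and Steinberg symbols in $K$-theory with $\Z/l^k$ coefficients, the Bockstein sequence~\eqref{Bockstein-sequence}, the commutative diagrams~\eqref{diagram1} and~\eqref{diagram2}, and the explicit Galois-equivariant Hecke character formulas of Corollary~\ref{GP-Hecke-characters}. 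This generalizes the $K = \Q$ computations of~\cite{BG1,BG2} to the $\zl[\chi]$-equivariant setting, where the extra ingredient of $\chi$-components forces a thorough tracking of the factor $\Theta_n^\chi = u \cdot l^a$ and of the DVR-module structure on $C$.
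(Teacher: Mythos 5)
The reformulation $A \cap \mathrm{Im}(\Lambda^\chi) = \Lambda^\chi(C[l^a])$ is a clean observation, and both inclusions are attacked with the right toolkit (Quillen localization, the Stickelberger property of $\Lambda$, Chebotarev via Lemma~\ref{HilbertChebotarev}, the Hecke character congruences, Bott/Bockstein manipulations). However, there are two substantive issues.

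For the inclusion $D(n)_l^\chi \subseteq A \cap \mathrm{Im}(\Lambda^\chi)$ you invoke Chebotarev to build a ``transport element'' $z$ whose boundary trades the support of $\partial(y)$ for primes with large residue $K$--groups. This is far heavier machinery than the paper uses: writing $d = x^{l^m}$ and applying $\partial_F^\chi\circ\Lambda^\chi = \Theta_n^\chi$ gives $\Lambda^\chi\circ\partial_F^\chi(x) = x^{\Theta_n^\chi}\,y$ with $y\in K_{2n}(\CO_F)_l^\chi$; raising to the power $D_\chi := l^m/\Theta_n^\chi \in \zl[\chi]$ (legitimate since $\Theta_n^\chi = u\cdot l^a$, $u$ a unit) kills the $y$--factor for $m$ large enough and gives $d = \Lambda^\chi\bigl(\partial_F^\chi(x)^{D_\chi}\bigr)$ directly, with no Chebotarev, no transport element, and no scaling-to-match-boundaries step. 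Your construction of $z$ with precisely prescribed boundary (scaled in each $v$-component to cancel $\eta$) is plausible but is left at the level of a sketch; it would need the full diagram chase and the precise boundary computation of the Steinberg element at each $w\mid v$, which is nontrivial work that the paper's algebraic shortcut avoids entirely.

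For the inclusion $A \cap \mathrm{Im}(\Lambda^\chi) \subseteq D(n)_l^\chi$, your sketch has a genuine gap. You assert that the ``correction'' $\Lambda^\chi(\xi_v)\,\Lambda^\chi(\xi_q)^{-1}$ lies in $l^k B$, attributing this to a level-change of Bott elements and the divisibility $\xi_q = l^k\tilde\xi_q$. The paper proves something sharper: after raising to the exponent $c_v$ and projecting by $\widetilde{e_\chi}$, the correction is \emph{exactly trivial}, i.e., $\Lambda^\chi(\xi_v^\chi)^{c_v} = \Lambda^\chi(\xi_{v_2}^\chi)^{c_v l^{m-k}}$. The crucial ingredient that makes the correction vanish is the Deligne--Ribet congruence (Theorem~\ref{Deligne-Ribet-congruences}): one converts the twist $\alpha^{\Theta_0({\bf b},{\bf f}_E)}\ast\beta(\xi_{l^k})^{\ast n}$ into $\bigl(\alpha\ast\beta(\xi_{l^k})^{\ast n}\bigr)^{\Theta_n({\bf b},{\bf f})}$ via the congruence $t_n(\widehat{\Theta_0}) = \widehat{\Theta_n}$, and then the exponent $c_v\Theta_n^\chi$ is divisible by $l^k$ (which is precisely the order constraint coming from $\partial^\chi(y)=1$), so the resulting $l^k$--torsion element dies. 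Nothing in your sketch produces this cancellation; ``the level-change of Bott elements'' gives you a power of the element \emph{at the new prime}, not vanishing of the difference at the old one. Without the Deligne--Ribet step, the correction term is an essentially arbitrary element of $K_{2n}(\CO_F)_l^\chi$, and there is no reason for it to be $l^k$--divisible. You should explicitly invoke Theorem~\ref{Deligne-Ribet-congruences} (and the projection formula manipulations of \eqref{alpha to theta 0 case}--\eqref{alpha to theta 0 case chi part}) to close this gap.
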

\begin{proof} Note that the exactness of the functors $M\to M^\chi$ combined with \eqref{decompose into chi components} shows
that equality (\ref{divisible elements via Lambda 3}) follows from equalities (\ref{divisible elements via Lambda 1}), for all $\chi\in\widehat G(\C)$.
So, we proceed with the proof of (\ref{divisible elements via Lambda 1}).
\medskip

First, take $d \in D (n)^{\chi}_{l}$ and take a natural number $m$ such that
$$l^m > l^{v_{l} (\Theta_{n} ({\bf b}, {\bf f})^{\chi})} |K_{2n} (\mathcal{O}_F)_{l}^{\chi}|.$$
Write $d$ as $d = x^{l^m}$, for some $x \in K_{2n} (F)_{l}^{\chi}.$
By \eqref{chi part theoremStickelbergerElementsProperty1}, we have
\begin{equation}
\partial_{F}^{\chi}\circ \Lambda^{\chi}\circ \partial_{F}^{\chi} (x) =
\partial_{F}^{\chi} (x)^{ \Theta_{n} ({\bf b}, {\bf f})^{\chi}}
\end{equation}
Hence, we have
\begin{equation}
\Lambda^{\chi} \circ \partial_{F}^{\chi} (x) = x^{ \Theta_{n} ({\bf b}, {\bf f})^{\chi}} y
\label{LambdaPartial1}
\end{equation}
for an element $y \in K_{2n} (\mathcal{O}_F)_{l}^{\chi}.$
Raising (\ref{LambdaPartial1}) to the power $D_{\chi} := l^{m}\cdot |\Theta_{n} ({\bf b}, {\bf f})^{\chi}|_{l}^{-1}$ gives:
\begin{equation}
\Lambda^{\chi} \circ \partial_{F}^{\chi} (x^{D_{\chi}}) = x^{l^m} = d.
\label{LambdaPartial111}
\end{equation}
Hence, we have  $d \in K_{2n} (\mathcal{O}_{F})_{l}^{\chi} \cap \,\, {\rm{Im}}\,\, \Lambda^{\chi}.$
\medskip

Now, assume that $y \in K_{2n} (\mathcal{O}_{F})_{l}^{\chi} \cap \,\, {\rm{Im}}\,\,
\Lambda^{\chi}.$ For each prime $v_0 \nmid l$ of $\mathcal{O}_K$ fix a prime
$v | v_0$ in $\CO_F$. With notations as in the previous section, we can write
\begin{equation}
y = \prod_{v} \, \Lambda^{\chi} (\xi_{v}^{\chi})^{c_v},
\label{Product for y}\end{equation}
where $c_v \in  \Z_l[\chi].$
Since $\partial_{K}^{\chi} (y) = 1$,  \eqref{chi part theoremStickelbergerElementsProperty1} implies that
$$(\xi_v^\chi)^{c_v  \Theta_{n} ({\bf b}, {\bf f})^{\chi}}=1, $$
for each of the chosen primes $v$. This, combined with \eqref{order of xivchi} implies that
\begin{equation}\label{divisible by l^k} l^k\mid c_v  \Theta_{n} ({\bf b}, {\bf f})^{\chi}, \text{ whenever }\xi_v^\chi\ne 1.
\end{equation}

Now, let us fix one of the chosen primes $v$ and assume that $\xi_v^\chi\ne 1$. With notations as in the previous section, \eqref{lambda-definition} and Theorem \ref{lambda-properties}(1) imply that
\begin{equation}
\Lambda^{\chi} (\xi_{v}^{\chi}) = \widetilde{e_{\chi}}\cdot Tr_{E/F} (\lambda^\ast_{{\bf b}, {\bf f}^\ast_E} (w) \ast
b(\beta(\xi_{l^k})^{\ast\, n}))^{\gamma_l}.
\label{description of Lambda chi xi chi}
\end{equation}

Let $m\in\Z_{\geq k}$ be any integer such that
\begin{equation}\label{m large} l^{m-k}\,\geq\, \mid K_{2n}(O_F)_l\mid\end{equation}
Since all the $l$--adic primes are
totally ramified in the extension $F(\mu_{l^{\infty}}) / F(\mu_l)$,  Lemma \ref{HilbertChebotarev} applied to the extension $E/F$
allows us to choose a prime $w_2$ of
$E$ which is coprime to ${\bf b}l$ such that: $v_2:=w_2\cap O_F$ splits completely in $E/F$, and $[w] = [w_2] \in Cl(\mathcal{O}_E)$, and $l^m\mid\mid (N w_2 - 1)$
for $m$ large.  Since $v_l(n)=0$ and $v_2$ splits completely in $E/F$, we have $l^m=|K_{2n-1}(k_{v_2})_l|$.
Let $\widetilde{w_2}$ be a prime of $E(\mu_{l^m})$ over $w_2.$
Under our simplifying hypothesis, the projection formula (see \cite{We}, Chapter V, \S3.3.2)
combined with Lemma \ref{norm-lemma}, Remark \ref{support of f prime} and Lemma \ref{HilbertChebotarev}(2)
gives the following relation:

\begin{equation}
\Lambda (\xi_{v_2})^{l^{m-k}} = Tr_{E(\mu_{l^m})/F} (\lambda^\ast_{{\bf b}, {\bf f}_{E}}
(\widetilde{w_2}) \ast b(\beta(\xi_{l^m})^{\ast\, n}))^{\gamma_l\cdot l^{m-k}} =
\nonumber\end{equation}
\begin{equation}
= Tr_{E(\mu_{l^m})/F} (\lambda^\ast_{{\bf b}, {\bf f}_{E}} (\widetilde{w_2}) \ast
b(\beta(\xi_{l^k})^{\ast\, n}))^{\gamma_l} =
\nonumber\end{equation}
\begin{equation}
= Tr_{E/F} (\lambda^\ast_{{\bf b}, {\bf f}_{E}} (w_2) \ast
b(\beta(\xi_{l^k})^{\ast\, n}))^{\gamma_l}
\label{RelationForLambdas}\end{equation}
By our choice of $w_2$ and Corollary \ref{GP-BS-elements}, we have

\begin{equation}
\lambda^\ast_{{\bf b}, {\bf f}_{E}} (w_2) = \lambda^\ast_{{\bf b}, {\bf f}_{E}} (w) \alpha^{\Theta_{0} ({\bf b}, {\bf f}_{E})} u
\label{comp. of lambda for w and w2}\end{equation}
for some $\alpha \in E^\times\otimes\zl$ and $u \in \mu_E\otimes\zl.$
Since $l$ is odd, $u = N_{E(\mu_{l^m})/E} (u_2)$
for some $u_2 \in \mu_{E(\mu_{l^m})}\otimes\zl$. Consequently, by the projection formula and \eqref{m large},
\begin{equation}
Tr_{E/F} (u \ast
b(\beta(\xi_{l^k})^{\ast\, n}))^{\gamma_l} =
Tr_{E(\mu_{l^m})/F} (u_2 \ast
b(\beta(\xi_{l^m})^{\ast\, n}))^{\gamma_l})^{l^{m-k}}=1.
\label{tr of unit twisted with Bott is divisible}
\end{equation}
Now, the projection formula combined with Theorem \ref{Deligne-Ribet-congruences} gives
\begin{equation}
b (Tr_{E/F} (\alpha^{\Theta_{0} ({\bf b}, {\bf f}_{E})} \ast
\beta(\xi_{l^k})^{\ast\, n})^{\gamma_l})^{c_v} =
{b (Tr_{E/F} (\alpha \ast
\beta(\xi_{l^k})^{\ast\, n}))}^{c_v \, \Theta_{n} ({\bf b}, {\bf f})}
\label{alpha to theta 0 case}\end{equation}

\noindent
Hence, if we multiply (\ref{alpha to theta 0 case}) by $\widetilde{e_\chi}$ and apply \eqref{divisible by l^k}, we obtain:

\begin{equation}
\widetilde{e_{\chi}} b (Tr_{E/F} (\alpha^{\Theta_{0} ({\bf b}, {\bf f}_{E})} \ast
\beta(\xi_{l^k})^{\ast\, n})^{\gamma_l})^{c_v} =
\widetilde{e_{\chi}} {b (Tr_{E/F} (\alpha \ast
\beta(\xi_{l^k})^{\ast\, n}))}^{c_v \, \Theta_{n} ({\bf b}, {\bf f})^{\chi}} = 1.
\label{alpha to theta 0 case chi part}\end{equation}
 Now
(\ref{RelationForLambdas}), (\ref{comp. of lambda for w and w2}),
(\ref{tr of unit twisted with Bott is divisible}) and
(\ref{alpha to theta 0 case chi part}) imply that:
\begin{equation}
\Lambda^{\chi} (\xi_{v}^{\chi})^{c_v} = \Lambda^{\chi} (\xi_{v_2}^{\chi})^{c_{v}l^{m-k}}.
\label{divisibility by l m - k}\end{equation}
The product in (\ref{Product for y}) ranges in fact over a finite set of primes $v$ that depends on $y$
and $m$ is arbitrarily large. Hence (\ref{Product for y}) and (\ref{divisibility by l m - k}) show that
$d \in D(n)^\chi_l.$
\end{proof}
\medskip

If we restrict Theorem \ref{divisible elements via Lambda} to the situation $K=F$, we obtain the following
condition for the cyclicity of the group of divisible elements $D(n)_l$ in $K_{2n} (K)_l$, for an arbitrary totally
real number field $K,$ under certain hypotheses.

\begin{theorem} \label{generalize Iwasawa} Let $K$ be a totally real number field,
$l$ an odd prime, and $n\geq 1$ an odd integer such that $v_l(n)=0$.
Let $D(n)_l$ be the group of divisible elements in
$K_{2n}(K)_l.$ Assume that $K$ satisfies the simplifying hypothesis above and that
$|\prod_{v | l} w_{n} (K_v)|_l = 1.$
Then, the following conditions are equivalent:
\begin{itemize}
\item[{(1)}] The group $D(n)_l$ is cyclic.
\item[{(2)}] There exists a prime ideal $v_0$ and an $\CO_K$--ideal ${\bf b}$ coprime to $w_{n+1}(K)_l$ such that $| K_{2n-1} (k_{v_0})_l|$
is divisible by $|(1 - N {\bf b}^{n+1}) \zeta_{K} (-n)|_{l}^{-1}$  and the map
$$
\Lambda_{v_0} \, :\, K_{2n-1}(k_{v_0})_l \rightarrow K_{2n} (K)_l
$$
associated to the data $(K/K, n, l, {\bf b}, v_0)$ is injective.
\end{itemize}
\end{theorem}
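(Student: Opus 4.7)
My plan is to leverage the triviality of $G = G(K/K)$: in this setting there is no $\chi$-decomposition, the Stickelberger element is the scalar $\Theta_n({\bf b},{\bf f}) = (1-N{\bf b}^{n+1})\zeta_{\bf f}(-n) \in \zl$, and with ${\bf f}$ chosen so that ${\rm Supp}({\bf f})$ contributes only $l$-unit Euler factors its $l$-adic valuation is precisely $m := v_l((1-N{\bf b}^{n+1})\zeta_K(-n))$. The hypothesis $|\prod_{v|l}w_n(K_v)|_l=1$ combined with the index formula on p.\,6 of the introduction forces $w_n(K)_l = 1$ and $D(n)_l = K_{2n}(\CO_K)_l$; Theorem \ref{divisible elements via Lambda} then collapses to $D(n)_l \subseteq \mathrm{Im}(\Lambda)$ whenever $\Theta_n \neq 0$. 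Setting $l^k := |K_{2n-1}(k_{v_0})_l|$, the divisibility condition in $(2)$ becomes $m \le k$, and Theorem \ref{lambda-properties}(2) (using $v_l(n)=0$) identifies $\partial_K \circ \Lambda_{v_0}(\xi_{v_0}) = \xi_{v_0}^{\Theta_n}$, an element of order $l^{k-m}$ in $\Z/l^k$.

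For $(2) \Rightarrow (1)$, injectivity of $\Lambda_{v_0}$ makes $\mathrm{Im}(\Lambda_{v_0})$ a cyclic $\zl$-module of order $l^k$, and its intersection with $\ker(\partial_K) = D(n)_l$ is the cyclic subgroup generated by $\Lambda_{v_0}(\xi_{v_0})^{l^{k-m}}$, of order $l^m$. The task is to prove this subgroup is all of $D(n)_l$, which will force $D(n)_l$ to sit inside a cyclic group and therefore be cyclic itself. Starting from an arbitrary $d \in D(n)_l \subseteq \mathrm{Im}(\Lambda)$, I would write $d = \prod_v \Lambda_v(\xi_v)^{c_v}$; for each nontrivial $v \neq v_0$, apply Lemma \ref{HilbertChebotarev} to the cyclotomic extension $K(\mu_{l^N})/K$ with $N \gg k$ to produce an auxiliary prime $v_2$ in the same ideal class as $v$ with $|K_{2n-1}(k_{v_2})_l|$ as large as desired. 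The projection formula, Corollary \ref{GP-BS-elements}, and the identity $\lambda^\ast_{{\bf b},{\bf f}_E^\ast}(w_2) = \lambda^\ast_{{\bf b},{\bf f}_E^\ast}(w)\cdot\alpha^{\Theta_0({\bf b},{\bf f}_E^\ast)}\cdot u$ used in the proof of Theorem \ref{divisible elements via Lambda} then let us rewrite $\Lambda_v(\xi_v)^{c_v}$ as $\Lambda_{v_2}(\xi_{v_2})^{c_v l^{N-k}}$; after finitely many such shifts the factor $l^{N-k}$ annihilates the contribution in the finite group $D(n)_l$ unless $v_2$ lies in the class of $v_0$, so the net effect on $d$ lies in $\mathrm{Im}(\Lambda_{v_0})$.

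For $(1) \Rightarrow (2)$, assume $D(n)_l$ is cyclic of order $l^N$. I would first choose ${\bf b}$ coprime to $l \cdot {\bf f} \cdot w_{n+1}(K)_l$ so that $v_l(\Theta_n) = N$; this is possible because the Lichtenbaum--Quillen theorem identifies $|D(n)_l|$ with $|\zeta_K(-n)|_l^{-1}$ up to the factor $w_{n+1}(K)_l$, which the Euler term $(1-N{\bf b}^{n+1})$ absorbs for generic ${\bf b}$. Then use Lemma \ref{HilbertChebotarev} to produce a prime $v_0$ with $|K_{2n-1}(k_{v_0})_l| = l^k$ arbitrarily large and lying in an ideal class such that $\Lambda_{v_0}(\xi_{v_0})^{l^{k-N}} \in D(n)_l$ is a generator of the cyclic group $D(n)_l$; cyclicity is precisely what makes a single-prime witness possible. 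Once this element has order exactly $l^N$, it follows that $\Lambda_{v_0}(\xi_{v_0})$ has order $l^k$, so $\Lambda_{v_0}$ is injective, establishing~$(2)$.

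The main obstacle will be the Chebotarev consolidation step in $(2) \Rightarrow (1)$: replacing contributions $\Lambda_v(\xi_v)^{c_v}$ from arbitrary primes $v$ by a single $\Lambda_{v_0}(\xi_{v_0})^{c_0}$. Lemma \ref{HilbertChebotarev} only shifts within a fixed ideal class while we need to land at $v_0$; the rescue is that each shift introduces a factor $l^{N-k}$ which, after enough iteration, kills the contribution modulo the finite group $D(n)_l$ unless the shifted prime already lies in the class of $v_0$. Controlling the auxiliary roots of unity $u \in \mu_E$ that appear in the identity $\lambda^\ast(w_2) = \lambda^\ast(w)\alpha^{\Theta_0}u$ (via Remark \ref{norm-extend-remark} and the projection formula, so that they disappear upon transferring to $K$ and pairing with high Bott powers) is the chief bookkeeping burden.
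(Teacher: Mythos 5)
Your setup is on the right track, but the proposal has two genuine gaps, one in each direction.

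\textbf{Direction $(2)\Rightarrow(1)$.} You correctly identify that $D(n)_l = K_{2n}(\CO_K)_l$ under the hypothesis $|\prod_{v|l}w_n(K_v)|_l=1$, and that ${\rm Im}(\Lambda_{v_0})\cap D(n)_l$ is the cyclic group generated by $\Lambda_{v_0}(\xi_{v_0})^{l^{k-m}}$, of order $l^m$ (using injectivity and the divisibility $m\le k$). But at that point you are already done, and the Chebotarev consolidation step you propose is both unnecessary and does not work. It is unnecessary because you already have a cyclic subgroup of order $l^m$ inside $D(n)_l$, whose order is $l^N$ with $N=v_l(w_{n+1}(K)\zeta_K(-n))$; since $w_{n+1}(K)_l$ divides $1-N{\bf b}^{n+1}$ for any ${\bf b}$ coprime to $l$, one always has $N\le m$, and then $l^m\le l^N$ (subgroup) forces $m=N$ and the subgroup is all of $D(n)_l$, hence $D(n)_l$ is cyclic. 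This is essentially what the paper does by exhibiting $\Lambda_{v_0}(\xi_{v_0})^{c_{v_0}}$ with $c_{v_0}=r_{v_0}/(w_{n+1}(K)\zeta_K(-n))$ as a generator. The consolidation you sketch does not work because the Chebotarev shift of Lemma \ref{HilbertChebotarev} keeps the auxiliary prime $v_2$ in the same ideal class as $v$ (it cannot be steered into the class of $v_0$), and the identity $\Lambda_v(\xi_v)^{c_v}=\Lambda_{v_2}(\xi_{v_2})^{c_v l^{N-k}}$ only exhibits $\Lambda_v(\xi_v)^{c_v}$ as an $l^{N-k}$-th power in $K_{2n}(K)_l$, not in $D(n)_l$, because $\Lambda_{v_2}(\xi_{v_2})^{c_v}$ need not lie in $K_{2n}(\CO_K)_l$. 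That is precisely why this manipulation appears in the proof of Theorem \ref{divisible elements via Lambda} to show membership in $D(n)_l$, not to move elements into ${\rm Im}(\Lambda_{v_0})$.

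\textbf{Direction $(1)\Rightarrow(2)$.} Here you never actually explain how to produce the prime $v_0$: Lemma \ref{HilbertChebotarev} lets you control $v_l(Nv_0-1)$ and the ideal class of $v_0$, but you do not say which class you should aim for, and ``cyclicity makes a single-prime witness possible'' is not a proof. The paper's argument supplies the missing step: using Theorem \ref{divisible elements via Lambda} (and the boundary computation) a generator $y$ of $D(n)_l$ can be written as $y=\prod_v\Lambda_v(\xi_v)^{c_v}$ with each factor $\Lambda_v(\xi_v)^{c_v}\in D(n)_l$; since $D(n)_l$ is cyclic, a factor with maximal order is a generator, and that determines the prime $v_0$. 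The divisibility and injectivity properties of $\Lambda_{v_0}$ are then read off from the relation ${\rm ord}(\xi_{v_0})\mid c_{v_0}\Theta_n$ and the fact that $\Lambda_{v_0}(\xi_{v_0})^{c_{v_0}}$ already has order $|w_{n+1}(K)\zeta_K(-n)|_l^{-1}$. You need this decomposition (or some equivalent mechanism) to know a single-prime witness exists.

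One minor point worth noting: you correctly flag that the imprimitive Euler factors at primes dividing ${\bf f}$ must be $l$-units so that $v_l(\Theta_n({\bf b},{\bf f}))=v_l((1-N{\bf b}^{n+1})\zeta_K(-n))$; the paper is silent on this, but it is handled by taking ${\rm Supp}({\bf f}_E^\ast)$ to consist of $l$-adic primes only, for which $1-N\frak p^n\equiv 1\pmod l$. That observation is a welcome clarification, but it does not compensate for the gaps above.
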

\begin{proof}
We start with a few preliminary remarks. Let ${\bf b}$ be an $\CO_K$--ideal coprime to $w_{n+1}(K)$. Note that since $K$ is totally real and $n$ is odd,
we have
$$(1 - N {\bf b}^{n+1}) \zeta_{K} (-n)\ne 0.$$
(See \cite{K}, p. 198.) Consequently, Theorem \ref {divisible elements via Lambda} applies to the data $(K/K, n, \chi={1})$. Construct a map $\Lambda$ for the data $(K/K, n, l, {\bf b})$ as in the previous section.
By Theorem \ref{divisible elements via Lambda} (see (\ref{divisible elements via Lambda 3})) and \eqref{divisible by l^k} we can write
every $y \in D(n)_l$ as
\begin{equation}
y = \prod_{v} \, \Lambda (\xi_{v})^{c_v},
\label{Product for y third time}
\end{equation}
where $c_v \in  \Z_l$ are such that
\begin{equation}\label{order}{\rm ord}(\xi_v)\mid c_v (1 - N {\bf b}^{n+1}) \zeta_{K} (-n),
\end{equation}
for each prime $v$ in $\mathcal{O}_K$. Hence, if we apply $\partial_K$ to
\eqref{Product for y third time}, we can conclude that
$\Lambda (\xi_{v})^{c_v} \in K_{2n} (\mathcal{O}_K)_l$ for each $v$.
So, again, by
Theorem \ref{divisible elements via Lambda} (equality (\ref{divisible elements via Lambda 3}))
we have
\begin{equation}\Lambda (\xi_{v})^{c_v} \in D (n)_l, \qquad\text{ for all }v.
\label{locally in dnl}\end{equation}
Now, \cite[Theorem 3 (ii)]{Ba2} and our assumption that  $|\prod_{v | l} w_{n} (K_v)|_l = 1$ imply that
\begin{equation}
|D(n)_l| = |w_{n+1} (K) \zeta_{K} (-n)|_{l}^{-1}.
\label{number of elements of D(n)l}
\end{equation}
Since $w_{n+1} (K) = {\rm gcd} \, (N {\bf b}^{n+1} - 1)$
where the ${\rm gcd}$ is taken over ideals ${\bf b}$ coprime with $w_{n+1} (K)$, there is an ideal ${\bf b},$
coprime with $w_{n+1} (K)_l,$ such that
\begin{equation}
|w_{n+1} (K) \zeta_{K} (-n)|_{l}^{-1} =
|(1-N {\bf b}^{n+1}) \zeta_{K} (-n)|_{l}^{-1}
\label{l parts of w zeta = l part of N b n+1 - 1}\end{equation}

{\bf (1) $\Rightarrow$ (2).} \,\, If $D(n)_l$  is trivial, then any $l$--adic prime $v_0$ satisfies the conditions in (2). Assume that $D(n)_l$ is cyclic and nontrivial. Let ${\bf b}$ be an $\CO_K$--ideal satisfying \eqref{l parts of w zeta = l part of N b n+1 - 1}. Construct a map $\Lambda$ for the data $(K/K, n, {\bf b})$.
 Relations (\ref{Product for y third time}) and \eqref{locally in dnl}
imply that there exists an $\CO_K$--prime $v_0$ such that $\Lambda_{v_0} (\xi_{v_0})^{c_{v_0}}$ is a generator of $D(n)_l$. (Take a $v_0$ such that $\Lambda_{v_0} (\xi_{v_0})^{c_{v_0}}$ has maximal order.)
By \eqref{number of elements of D(n)l} we have
$$|w_{n+1} (K)
\zeta_{K} (-n)|_{l}^{-1}=|(1 - N {\bf b}^{n+1}) \zeta_{K} (-n)|_{l}^{-1}={\rm ord}(\Lambda_{v_0} (\xi_{v_0})^{c_{v_0}})\leq {\rm ord}(\xi_{v_0}).$$
Since $\xi_{v_0}$ is a generator of $| K_{2n-1} (k_{v_0})_l|$, this implies that $$|(1 - N {\bf b}^{n+1}) \zeta_{K} (-n)|_{l}^{-1}\mid
| K_{2n-1} (k_{v_0})_l|.$$ Moreover, the map $\Lambda_{v_0}$
must be injective. Indeed, if ${\rm ord}(\xi_{v_0})=1$ this is clear. If ${\rm ord}(\xi_{v_0})>1$ and ${\rm ord}(\Lambda_{v_0} (\xi_{v_0}))<{\rm ord}(\xi_{v_0}),$ then
$1<{\rm ord}(\Lambda_{v_0} (\xi_{v_0})^{c_{v_0}})<{\rm ord}(\xi_{v_0}^{c_{v_0}}).$
But this is impossible, since $\Lambda_{v_0} (\xi_{v_0})^{c_{v_0}}$ has order $|w_{n+1} (K)
\zeta_{K} (-n)|_{l}^{-1}$ and this number also annihilates $\xi_{v_0}^{c_{v_0}}$ by \eqref{order}.
\medskip

{\bf (2) $\Rightarrow$ (1).} \,\, Let $r_{v_0}:=|K_{2n-1}(k_{v_0})_l |.$ Consequently,
the number
$$c_{v_0} := \frac{r_{v_0}}{w_{n+1} (K) \zeta_{K} (-n)}$$
has nonnegative $l$--adic valuation. Moreover, by Theorem \ref{local-lambda-theorem}
we have $$\partial_K(\Lambda_{v_0} (\xi_{v_0})^{c_{v_0}})=\xi_{v_0}^{r_{v_0}}=1.$$ Consequently,  $\Lambda (\xi_{v_0})^{c_{v_0}}\in K_{2n} (\mathcal{O}_K)_l.$
Hence by
Theorem \ref{divisible elements via Lambda} (equality (\ref{divisible elements via Lambda 3}))
we have $\Lambda_{v_0} (\xi_{v_0})^{c_{v_0}} \in D (n)_l.$ Since $\Lambda_{v_0}$ is injective,  we have
$${\rm ord}(\Lambda_{v_0} (\xi_{v_0})^{c_{v_0}})=|w_{n+1} (K) \zeta_{K} (-n)|_{l}^{-1}.$$ Now, \eqref{number of elements of D(n)l}
implies that $D (n)_l$ is cyclic generated by $\Lambda_{v_0} (\xi_{v_0})^{c_{v_0}}$.
\end{proof}

The following technical lemma refines the implication $(1)\Longrightarrow (2)$ in the above theorem and will be needed in the
next section.

\begin{lemma}\label{infinitely many v} Assume the hypotheses of Theorem \ref{generalize Iwasawa}, and assume that $D(n)_l$ is cyclic.
Then for any $\CO_K$--ideal ${\bf b}$ which is coprime to $w_{n+1}(K)_l$ and such that
\begin{equation}\label{choosing b}(1-N{\bf b}^{n+1})\Z_l=w_{n+1}(K)_l\Z_l\end{equation}
and any integer $m>>0$, there exist infinitely many $\CO_K$--primes $v$ such that
$\Lambda_v$ is injective and $l^m\mid (Nv-1)$.
\end{lemma}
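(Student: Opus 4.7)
\emph{Strategy and setup via Hilbert--Chebotarev.} The cyclicity of $D(n)_l$ combined with Theorem~\ref{generalize Iwasawa}$(1)\!\Rightarrow\!(2)$ produces a prime $v_0$ of $\CO_K$ for which $\Lambda_{v_0}$, built from the same ${\bf b}$ fixed in the lemma, is injective. The plan is to transport this injectivity to infinitely many primes $v$ with $l^m\mid Nv-1$ via a Hilbert--Chebotarev argument modelled on the derivation of~\eqref{divisibility by l m - k}. Let $k_0:=v_l(|K_{2n-1}(k_{v_0})_l|)$ and $d:=v_l(|D(n)_l|)=v_l((1-N{\bf b}^{n+1})\zeta_K(-n))$, and choose $c_{v_0}\in l^{k_0-d}\,\Z_l^\times$ so that $\Lambda_{v_0}(\xi_{v_0})^{c_{v_0}}$ generates $D(n)_l$, as in the proof of Theorem~\ref{generalize Iwasawa}. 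Put $E_0:=K(\mu_{l^{k_0}})$ and fix a prime $w_0$ of $E_0$ above $v_0$. The simplifying hypothesis implies that $E_0(\mu_{l^\infty})/E_0$ is totally ramified at every $l$--adic prime while the Hilbert class field $E_0^H/E_0$ is everywhere unramified, whence $E_0^H\cap E_0(\mu_{l^\infty})=E_0$. For any $m''\geq\max(m,\,k_0+v_l(|K_{2n}(\CO_K)_l|))$, Lemma~\ref{HilbertChebotarev} applied to $(L,F,{\bf p})=(E_0,K,w_0)$ will yield infinitely many primes $w$ of $E_0$ satisfying $[w]=[w_0]$ in $Cl(\CO_{E_0})$, $l^{m''}\|Nw-1$, and with $v:=w\cap K$ splitting completely in $E_0/K$; consequently $|K_{2n-1}(k_v)_l|=l^{m''}$ and $l^m\mid Nv-1$.

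\emph{The comparison identity.} Fix such a prime $v$, set $E_v:=K(\mu_{l^{m''}})$, and let $\widetilde{w}$ be a prime of $E_v$ above $w$. Starting from
$$\Lambda_v(\xi_v)=Tr_{E_v/K}\bigl(\lambda^\ast_{{\bf b},{\bf f}_{E_v}}(\widetilde{w})\ast b(\beta(\xi_{l^{m''}})^{\ast n})\bigr)^{\gamma_l}$$
and raising to the $l^{m''-k_0}$-th power, I will apply, in order: (a) the Bott identity $b(\beta(\xi_{l^{m''}})^{\ast n})^{l^{m''-k_0}}=b(\beta(\xi_{l^{k_0}})^{\ast n})$; (b) the projection formula, rewriting $Tr_{E_v/K}$ as $Tr_{E_0/K}\circ N_{E_v/E_0}$; (c) the norm relation of Remark~\ref{norm-extend-remark}, whose Euler product $\prod(1-\sigma_{\frak p}^{-1})$ collapses because ${\rm Supp}({\bf f}_{E_v})={\rm Supp}({\bf f}_{E_0})$ under the simplifying hypothesis (the only possible new ramification in $E_v/E_0$ is $l$--adic, already present in both); (d) the class identity
$$\lambda^\ast_{{\bf b},{\bf f}_{E_0}}(w)=\lambda^\ast_{{\bf b},{\bf f}_{E_0}}(w_0)\cdot u\cdot\alpha^{\Theta_0({\bf b},{\bf f}_{E_0})},$$
with $u\in\mu_{E_0}\otimes\Z_l$ and $\alpha\in E_0^\times\otimes\Z_l$, produced by $[w]=[w_0]$ and Remark~\ref{norm-relations-star}; and (e) the Deligne--Ribet congruences, in the manner of~\eqref{alpha to theta 0 case}--\eqref{alpha to theta 0 case chi part}. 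The target is the identity
$$\Lambda_v(\xi_v)^{c_{v_0}\,l^{m''-k_0}}=\Lambda_{v_0}(\xi_{v_0})^{c_{v_0}}.$$

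\emph{Conclusion and main obstacle.} Writing $l^e:={\rm ord}(\Lambda_v(\xi_v))$ and noting $v_l(c_{v_0}l^{m''-k_0})=m''-d$, the left-hand side of this identity has order $l^{e-\min(e,\,m''-d)}$ while the right-hand side has order $l^d$; combined with the lower bound $l^e\geq{\rm ord}(\partial_K\Lambda_v(\xi_v))=l^{m''-d}$ from Theorem~\ref{lambda-properties}(2), this forces $e=m''$, so $\Lambda_v$ is injective on the cyclic group $K_{2n-1}(k_v)_l$ (and when $d=0$ the lower bound alone already gives injectivity). Infinitely many such $v$ arise from the infinite supply of $w$ produced by Hilbert--Chebotarev. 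The principal technical effort will be verifying the vanishing of the error terms produced in steps (c)--(d). The root-of-unity contributions (both the $u$ of (d) and the root of unity from (c)) are to be lifted back to $\mu_{E_v}\otimes\Z_l$, which is possible since $l$ is odd and $N_{E_v/E_0}\colon\mu_{E_v}\otimes\Z_l\to\mu_{E_0}\otimes\Z_l$ is surjective; the projection formula then rewrites them as $l^{m''-k_0}$-th powers of elements of $K_{2n}(\CO_K)_l$ (units have trivial boundary, so the relevant $u_2\ast b(\beta^{\ast n})$ lies in $K_{2n}$ of the integers), and these powers vanish because $l^{m''-k_0}\geq|K_{2n}(\CO_K)_l|$. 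The $\alpha^{\Theta_0}$ term, after the Deligne--Ribet rewrite and multiplication by $c_{v_0}$, becomes $b(\cdot)^{c_{v_0}\Theta_n({\bf b},{\bf f})}$ of $l$--adic exponent exactly $k_0$, which annihilates the $l^{k_0}$-torsion target. This is essentially the bookkeeping carried out in the proof of Theorem~\ref{divisible elements via Lambda}, suitably adapted to the asymmetric roles of $v_0$ and $v$.
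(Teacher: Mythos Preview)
Your proposal is correct and follows essentially the same route as the paper's proof. The paper likewise fixes a $v_0$ with $\Lambda_{v_0}(\xi_{v_0})^{c_{v_0}}$ generating $D(n)_l$, invokes the comparison identity $\Lambda_v(\xi_v)^{c_{v_0}l^{m-k}}=\Lambda_{v_0}(\xi_{v_0})^{c_{v_0}}$ obtained via the Hilbert--Chebotarev construction and the calculations leading to \eqref{divisibility by l m - k}, and then concludes injectivity; the only cosmetic difference is that the paper deduces injectivity of $\Lambda_v$ by reapplying the criterion isolated in the proof of $(1)\Rightarrow(2)$ of Theorem~\ref{generalize Iwasawa}, whereas you carry out an equivalent explicit order comparison.
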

\begin{proof} Let us fix a ${\bf b}$ which satisfies \eqref{choosing b}.
The proof of $(1)\Longrightarrow (2)$ in Theorem \ref{generalize Iwasawa} shows that if one has an $\CO_K$--ideal $v_0$ such that
$|(1 - N {\bf b}^{n+1}) \zeta_{K} (-n)|_l^{-1} \mid (Nv_0^{n}-1)$ and $\Lambda_{v_0}(\xi_{v_0})^{c_{v_0}}$ generates
$D(n)_l$ for some $c_{v_0}\in\Z$, then the map $\Lambda_{v_0}$ is injective. Let us fix a $v_0$ and a $c_{v_0}$ satisfying all these properties.

Let $m\in\Bbb Z_{\geq k}$ such that
$$l^{m-k}\geq \max \left\{ |(1 - N {\bf b}^{n+1}) \zeta_{K} (-n)|_l^{-1}, \quad |K_{2n}(\CO_K)_l|\right\}.$$
The technique developed in the proof of Theorem \ref{divisible elements via Lambda} allows us to construct infinitely many
$\CO_K$--primes $v$ which are coprime to ${\bf b}l$ and which satisfy
$$l^m\mid (Nv-1), \qquad \Lambda_v(\xi_v)^{c_{v_0}\cdot l^{m-k}}=\Lambda_{v_0}(\xi_{v_0})^{c_{v_0}}.$$
(See \eqref{divisibility by l m - k} and the arguments preceding it.) For any such prime $v$,  we have
$$|(1 - N {\bf b}^{n+1}) \zeta_{K} (-n)|_l^{-1} \mid (Nv^{n}-1)$$
and $\Lambda_v(\xi_v)^{c_{v_0}\cdot l^{m-k}}$ generates $D(n)_l$.
Consequently, the map $\Lambda_v$ is injective.
\end{proof}

\medskip

In the particular case $K = \Q$, the above theorem is closely related to a classical conjecture of
Iwasawa. We make this link explicit in what follows.
For that purpose, let $A$ be the $l$-torsion part of the class group of $\Z[\mu_l].$ Let
${\omega} \, :\, G(\Q(\mu_l)/\Q) \rightarrow \Z_{l}^{\times}$ be the Teichm{\" u}ller
character. For every $i\in\Z_{\geq 0}$, let $e_{\omega^i} \in
\Z_l [G(\Q(\mu_l) / \Q)]$ be the
idempotent associated to $\omega^i$. View $A$ as a $\zl[G(\Q(\mu_l)/\Q)]$--module and let
$$A^{[i]}:=A^{\omega^i}=e_{\omega^i} A,$$ in the
notations used at the beginning of this section.

\begin{conjecture} {\rm (Iwasawa)} \quad
$A^{[l-1 - n ]}$ is cyclic for all $n$ odd, $1 \leq n < l-1.$
\label{Iwasawa conjecture}\end{conjecture}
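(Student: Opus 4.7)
The plan is to attack Iwasawa's conjecture by reducing it, via the machinery of \S5--\S6, to an explicit injectivity question about the Stickelberger splitting map, then to attempt to verify that injectivity by exhibiting a judiciously chosen auxiliary prime. Concretely, I would first invoke Kurihara's equivalence \eqref{Kurihara}: for $n$ odd, $A^{[l-1-n]} = Cl(\Q(\mu_l))_l^{\omega^{-n}}$ is cyclic if and only if $div(K_{2n}(\Q)_l)$ is cyclic. So it suffices to prove that $D(n)_l := div(K_{2n}(\Q)_l)$ is cyclic for every odd $n$ with $1\leq n<l-1$.

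Next I would apply Theorem \ref{generalize Iwasawa} with $K=\Q$. The simplifying hypothesis holds automatically (since $\Q(\mu_l)/\Q$ is totally ramified at $l$ and $\Q(\mu_{l^\infty})/\Q(\mu_l)$ is totally ramified at the unique prime above $l$), and $|\prod_{v|l}w_n(\Q_v)|_l=1$ for odd $n$ with $l\nmid n$. Thus cyclicity of $D(n)_l$ reduces to exhibiting an ideal ${\bf b}\subset\Z$ coprime to $w_{n+1}(\Q)_l$ and a prime $v_0$ such that $|(1-N{\bf b}^{n+1})\zeta_\Q(-n)|_l^{-1}$ divides $|K_{2n-1}(\F_{v_0})_l|$ and such that the map $\Lambda_{v_0}$ associated to $(\Q/\Q,n,l,{\bf b},v_0)$ is injective. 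Choosing ${\bf b}$ so that $(1-N{\bf b}^{n+1})\Z_l=w_{n+1}(\Q)_l\Z_l$ and appealing to Chebotarev (as in Lemma \ref{HilbertChebotarev}) to produce infinitely many candidate $v_0$ meeting the divisibility condition, the entire question collapses to: \emph{is $\Lambda_{v_0}$ injective for at least one such $v_0$?}

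To tackle the injectivity, I would use the explicit description
$$\Lambda_{v_0}(\xi_{v_0}) = Tr_{E/\Q}\bigl(\lambda^\ast_{{\bf b},{\bf f}^\ast_E}(w)\ast b(\beta(\xi_{l^k})^{\ast n})\bigr)^{\gamma_l},\qquad E=\Q(\mu_{l^k}),$$
noting that for $K=\Q$ the values $\lambda^\ast_{{\bf b},{\bf f}^\ast_E}(w)$ are (up to explicit units) Weil's Jacobi sums, hence essentially products of classical Gauss sums. By Theorem \ref{local-lambda-theorem} the boundary $\partial_\Q\circ\Lambda_{v_0}(\xi_{v_0})$ is the prescribed power of $\xi_{v_0}$, so the only issue is controlling the kernel of $\partial_\Q$, i.e., how far $\Lambda_{v_0}(\xi_{v_0})$ sits inside $K_{2n}(\Z)_l=D(n)_l$ with the correct order. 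The strategy would be to combine the norm-compatibility of Hecke-character values in cyclotomic towers (Lemma \ref{norm-lemma}, Remark \ref{norm-relations-star}) to manufacture a Kolyvagin-style Euler system indexed by primes $v_0$, and then to use the Iwasawa Main Conjecture (Mazur--Wiles) to pin down the characteristic ideal of $A^{[l-1-n]}$ and thus the order of $D(n)_l$, forcing the Gauss-sum--Bott class $\Lambda_{v_0}(\xi_{v_0})$ to have maximal possible order for some $v_0$.

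The main obstacle is that knowing the characteristic ideal of $A^{[l-1-n]}$ from the Main Conjecture is strictly weaker than knowing cyclicity. The approach above would succeed only if one can upgrade Main-Conjecture information to an actual structure-theoretic statement, which is precisely the hard content of Iwasawa's conjecture. In effect, the plan reduces the conjecture to the concrete nontriviality of an explicit family of Gauss-sum--Bott classes in $K_{2n}(\Z)_l$, a form possibly more amenable to computation or to Euler-system arguments, but whose full verification is itself the essential arithmetic difficulty and remains open.
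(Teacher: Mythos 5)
This statement is an open conjecture; the paper offers no proof of it, only equivalent reformulations (Corollary~\ref{a different view at the Iwasawa conj.} and Theorem~\ref{another look at the Iwasawa conj.}). Your proposal correctly recognizes this: you trace precisely the paper's own chain of reductions --- Kurihara's equivalence to the cyclicity of $D(n)_l$, then Theorem~\ref{generalize Iwasawa} to the injectivity of some $\Lambda_{v_0}$, then the explicit Gauss--sum description of $\Lambda_{v_0}(\xi_{v_0})$ --- and you correctly identify that the Main Conjecture (Mazur--Wiles) controls only orders/characteristic ideals, not module structure, so it cannot close the gap. Your conclusion that the conjecture collapses to the nontriviality of explicit Gauss-sum--Bott classes, which remains open, is exactly the paper's position; no genuine difference in approach, and no error, since you stop short of claiming a proof.
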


\noindent Recall the following result (see \cite{BG1}, \cite{BG2}):
\begin{proposition}\label{Iwasawa in terms of cyclicity} Let $n$ be odd, $1 \leq n < l-1.$ Then
$A^{[l-1 - n ]}$ is cyclic if and only if $D(n)_l$ is cyclic, where $D(n)_l$ is the group of divisible elements in
$K_{2n} (\Q)_l.$
\end{proposition}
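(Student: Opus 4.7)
The plan is to exhibit a cyclicity-preserving isomorphism of abelian groups $D(n)_l \cong A^{[l-1-n]}$; cyclicity of either group then clearly forces cyclicity of the other. Observe first that this is essentially the content of \eqref{Kurihara} recalled in the introduction, upon noting $\omega^{-n} = \omega^{l-1-n}$ since $\omega$ has order $l-1$. The route I would take proceeds via étale cohomology and Galois descent for the cyclotomic extension $\Q(\mu_l)/\Q$.

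First, I would invoke the Quillen--Lichtenbaum theorem (as cited via \cite{Voevodsky} in the introduction) to identify
\[
K_{2n}(\Z[1/l])_l \;\cong\; H^2_{\text{\'et}}(\Z[1/l], \zl(n+1)).
\]
Combined with the identity $K_{2n}(\Z)_l = D(n)_l$ recalled in the introduction for $L = \Q$, this realizes $D(n)_l$ as (the relevant subgroup of) that étale cohomology group. Setting $\Delta := G(\Q(\mu_l)/\Q)$, of order $l-1$ coprime to $l$, the Hochschild--Serre spectral sequence for the finite étale cover $\mathrm{Spec}(\Z[\mu_l, 1/l]) \to \mathrm{Spec}(\Z[1/l])$ collapses, yielding
\[
D(n)_l \;\cong\; H^2_{\text{\'et}}(\Z[\mu_l, 1/l], \zl(n+1))^{\Delta}.
\]

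Second, via the Kummer sequence over $\Z[\mu_l, 1/l]$, the latter $H^2$ sits in a standard exact sequence whose main term is the Tate twist $A(n) := A \otimes_{\zl} \zl(n)$, with auxiliary terms arising from local cohomology at the $l$-adic primes. The key observation is that $\Delta$ acts on $A(n)$ by $\omega^i \cdot \omega^n$ on the $\omega^i$-isotypic component $A^{[i]}$, so
\[
(A(n))^\Delta \;\cong\; A^{[-n \bmod (l-1)]} \;=\; A^{[l-1-n]}
\]
as abelian groups. Concatenating these identifications gives $D(n)_l \cong A^{[l-1-n]}$, and the two cyclicity statements coincide.

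The main obstacle is the third step: verifying that the auxiliary local contributions from the $l$-adic primes do not contaminate the $\omega^{l-1-n}$-eigencomponent on either side of the chain of isomorphisms. For odd $n$ with $1 \leq n < l-1$, this can be controlled using the nonvanishing of the relevant $l$-adic $L$-value $L_l(\omega^{l-1-n}, -n)$ --- equivalently, the nonvanishing modulo $l$ of the generalized Bernoulli number $B_{1,\omega^{-n}}$ --- which forces the local obstructions to live in $\Delta$-eigencomponents orthogonal to the one carrying $A^{[l-1-n]}$. This separation of eigencomponents is exactly what makes the odd $n$, $1 \le n < l-1$ case cleaner than the general statement and reduces the proof to the references \cite{BG1, BG2, Ku} cited in the excerpt.
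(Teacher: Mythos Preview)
Your cohomological route is essentially the classical argument behind the result as cited from \cite{BG1}, \cite{BG2}, \cite{Ku}, and is genuinely different from the paper's own new proof given as Theorem~\ref{another look at the Iwasawa conj.}. There, cyclicity of $D(n)_l$ is first shown (via Theorem~\ref{generalize Iwasawa} and Lemma~\ref{infinitely many v}) to be equivalent to injectivity of a suitable $\Lambda_{v_0}$; a three-step analysis then reduces this injectivity to the condition $e_{\omega^{-n}}(\lambda_{{\bf b},{\bf f}_E}(w))\notin(\mu_l\cdot E^{\times l})/E^{\times l}$ for a well-chosen prime $w$ of $E=\Q(\mu_l)$; finally a divisor computation together with Mazur--Wiles ties this to cyclicity of $A^{[l-1-n]}$. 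The paper's route is longer but constructive, realising the divisible elements as explicit images of $\Lambda$; yours is the direct structural identification.

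That said, your sketch has a real gap at the second step. The Kummer sequence over $\Z[\mu_l,1/l]$ does not produce ``an exact sequence whose main term is $A(n)$ with auxiliary local terms''. Over $\CO_{E,S}$ with $E=\Q(\mu_l)$ and $S$ the unique $l$-adic prime, the Brauer group already vanishes (Lemma~\ref{class group and cohomology}) and that prime is principal, so there are no local obstructions of the kind you describe; your last paragraph is addressing a non-issue. The genuine obstruction is that the \'etale sheaf $\Z/l^m(n+1)$ is constant on $\mathrm{Spec}(\CO_{E,S})$ only for $m=1$, so the Kummer identification only delivers the mod-$l$ statement
\[
D(n)_l/l \;\cong\; H^2\bigl(\Z[1/l],\Z/l(n+1)\bigr) \;\cong\; \bigl(H^2(\CO_{E,S},\Z/l(1))\otimes\Z/l(n)\bigr)^{\Delta} \;\cong\; A^{[l-1-n]}/l,
\]
not your claimed integral isomorphism. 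This mod-$l$ version already suffices for the proposition, since a finite $l$-group is cyclic if and only if its quotient modulo $l$ has order at most $l$. If you want the full isomorphism $D(n)_l\cong A^{[l-1-n]}$ you must climb the cyclotomic tower and descend as in the paper's Appendix (see the proof of \eqref{isom. of plus part of twisted class group with plus part of cohom.}), not invoke a single Kummer sequence at level $\Q(\mu_l)$.
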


In the next section, we will give a new proof of this result (see Theorem \ref{another look at the Iwasawa conj.}), based on our construction of the Galois equivariant Stickelberger splitting map $\Lambda$.
For the moment, let us simply observe that if combined with Proposition \ref{Iwasawa in terms of cyclicity}, Theorem \ref{generalize Iwasawa}
gives the following equivalent formulation of Iwasawa's conjecture in terms of our
map $\Lambda$.

\begin{corollary} \label{a different view at the Iwasawa conj.}
Let $n$ be an odd integer such that $1 \leq n < l-1.$
The following conditions are equivalent:
\begin{enumerate}
\item The group $A^{[l-1-n]}$ is cyclic.
\item The group $D(n)_l$ is cyclic.
\item There exists a prime number $p$ and an integer ${b} \in \Z_{\geq 1}$  coprime to $w_{n+1}(\Q)_l$ such that
$|(1 - {b}^{n+1}) \zeta_{\Q} (-n)|_{l}^{-1}$ divides  $| K_{2n-1} (\F_{p})_l|$ and the
map
$$
\Lambda_{p} \, :\, K_{2n-1}(\F_{p})_l \rightarrow K_{2n} (\Q)_l
$$
associated to the data $(\Q/\Q, n, l, {b}\Z, p)$  is injective.
\end{enumerate}
\end{corollary}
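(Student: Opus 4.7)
The plan is to obtain the corollary as a direct consequence of chaining together two previously established equivalences. Proposition \ref{Iwasawa in terms of cyclicity} delivers (1) $\Leftrightarrow$ (2), so the real work is to apply Theorem \ref{generalize Iwasawa} with the choice $K=\Q$ in order to obtain (2) $\Leftrightarrow$ (3).

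First, I would check that the hypotheses of Theorem \ref{generalize Iwasawa} are satisfied for the setup $(K/K, n, l) = (\Q/\Q, n, l)$ whenever $n$ is odd with $1\leq n < l-1$. The requirement $v_l(n)=0$ is immediate since $n<l$. The simplifying hypothesis holds because the unique $l$--adic prime of $\Q$ is totally ramified in $\Q(\mu_{l^{\infty}})/\Q$; in particular it is ramified in $\Q(\mu_l)/\Q$ and totally ramified in $\Q(\mu_{l^{\infty}})/\Q(\mu_l)$. Finally, the condition $\bigl|\prod_{v\mid l} w_n(\Q_v)\bigr|_l = 1$ reduces to $w_n(\Q_l)_l = 1$; since the restriction of the cyclotomic character $\omega$ to $G_{\Q_l}$ still surjects onto $(\Z/l)^\times$, and since $n \not\equiv 0 \pmod{l-1}$, we have $\omega^n \neq 1$ modulo $l$, whence $(\Q_l/\Z_l)(n)^{G_{\Q_l}}$ is trivial.

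Second, I would invoke Theorem \ref{generalize Iwasawa} directly. Its condition (2), translated through the bijection between nonzero primes of $\Z$ and positive rational primes $p$, reads exactly: there exists a rational prime $p$ and an integer $b \in \Z_{\geq 1}$ coprime to $w_{n+1}(\Q)_l$ such that $|K_{2n-1}(\F_p)_l|$ is divisible by $|(1 - b^{n+1})\zeta_\Q(-n)|_l^{-1}$ and the associated map $\Lambda_p \colon K_{2n-1}(\F_p)_l \to K_{2n}(\Q)_l$ is injective. This is precisely condition (3) of the corollary. Thus Theorem \ref{generalize Iwasawa} yields (2) $\Leftrightarrow$ (3), and combining with Proposition \ref{Iwasawa in terms of cyclicity} gives the full chain (1) $\Leftrightarrow$ (2) $\Leftrightarrow$ (3).

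There is no substantive obstacle in this argument beyond a careful verification that the hypotheses of Theorem \ref{generalize Iwasawa} translate correctly in the $K=\Q$ case; the entire corollary is essentially a specialization and a rewriting. The only mildly non-trivial point to articulate is the vanishing of $w_n(\Q_l)_l$ in the relevant range $1\leq n < l-1$, which is what allows the formula $|D(n)_l| = |w_{n+1}(\Q)\zeta_{\Q}(-n)|_l^{-1}$ used inside the proof of Theorem \ref{generalize Iwasawa} to apply without local correction factors.
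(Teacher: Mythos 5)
Your proposal is correct and follows exactly the paper's own approach: combine Proposition \ref{Iwasawa in terms of cyclicity} for (1) $\Leftrightarrow$ (2) with Theorem \ref{generalize Iwasawa} specialized to $K=\Q$ for (2) $\Leftrightarrow$ (3), after verifying $v_l(n)=0$, the simplifying hypothesis for $\Q$, and $w_n(\Q_l)_l=1$ for $1\le n<l-1$. The hypothesis checks you carry out (in particular the vanishing of $w_n(\Q_l)_l$ via nontriviality of $\omega^n$ on $G_{\Q_l}$) are precisely the observations the paper records.
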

\begin{proof} This is a direct consequence of Theorem \ref{generalize Iwasawa} upon
observing that $v_{l} (n) = 0$ and $|w_{n} (\Q_l)|_l = 1$, for each $n$ odd and $1 \leq n < l-1$ and
also that $\Q$ satisfies our simplifying hypothesis for all odd primes $l$.
\end{proof}

%\begin{remark}\label{splitting remark} An argument based on Chebotarev's density theorem shows that if (2) in the Corollary holds
%then for any $b \in \Z_{\geq 1}$  coprime to $w_{n+1}(\Q)_l$ there exist infinitely many primes $p$ which
%satisfy the conditions in (3) and, in addition, $p$ splits completely in $\Bbb Q(\mu_{l^k})$, where
%$l^k=| K_{2n-1} (\F_{p})_l|.$
%\end{remark}

\begin{remark}
In light of Theorem \ref{generalize Iwasawa} and Corollary
\ref{a different view at the Iwasawa conj.}, the question regarding the cyclicity of
$D(n)_l \in K_{2n} (K)_l$ may be viewed as an extension
of Iwasawa's conjecture to arbitrary totally real number fields $K$, under the obvious hypotheses on $l$ and $n$.
At this point we do not have sufficient data to conjecture that $D(n)_l$ is cyclic at this level of generality. In the next
section we will do a close analysis of the injectivity of $\Lambda_{v_0}$ in the case $F=K=\Bbb Q$. In the process, we will give a new proof of
Proposition \ref{Iwasawa in terms of cyclicity} (see Theorem \ref{another look at the Iwasawa conj.}.) In future work, we hope to extend the techniques developed in the next section to more general totally real number fields $K$
and study a generalization of Iwasawa's cyclicity conjecture in that setting.
\end{remark}
%\bigskip

\section{Conditions for the injectivity of $\Lambda_{v_0}$ in the case $K=F=\Bbb Q$}

In this section we assume that $K=F=\Bbb Q$ and fix an odd prime $l$. Next we pick a natural number ${\bf b}$ which is coprime to $l$ and satisfies
the two conditions in the following elementary Lemma.
\begin{lemma}\label{b-lemma} Let ${\bf b}$ be a natural number coprime to $l$. Then $(1-{\bf b}\cdot\sigma_{\bf b}^{-1})$ is a generator
of the ideal ${\rm Ann}_{\Bbb Z_l[G(\Bbb Q(\mu_l)/\Bbb Q)]}(\mu_l)$ if and only if the following conditions are simultaneously satisfied.
\begin{enumerate}
\item $\omega(\overline{\bf b})$ is a generator of $\mu_{l-1}$, where $\omega: (\Z/l\Z)^\times\to \mu_{l-1}$ is the  Teichm\"{u}ller character and $\overline{\bf b}:={\bf b}\mod l$.
\item ${\bf b}\not\equiv \omega(\overline{\bf b}) \mod l^2\Z_l$.
\end{enumerate}
\end{lemma}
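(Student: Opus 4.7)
The plan is to analyze the structure of $\Z_l[G]$ and ${\rm Ann}_{\Z_l[G]}(\mu_l)$ via the character decomposition, where $G:=G(\Q(\mu_l)/\Q)$. Since $l\nmid |G|=l-1$, the idempotents $e_{\omega^i}$ for $i=0,\dots,l-2$ live in $\Z_l[G]$ and give a ring isomorphism
$$\Phi:\Z_l[G]\xrightarrow{\sim}\prod_{i=0}^{l-2}\Z_l,\qquad x\mapsto\bigl(\omega^i(x)\bigr)_{i}.$$
Under $\Phi$, the $\Z_l[G]$-module $\mu_l$ is the $\omega$-isotypical component of $\F_l$, so its annihilator corresponds to the ideal
$$\Phi\bigl({\rm Ann}_{\Z_l[G]}(\mu_l)\bigr)\;=\;\Bigl(\prod_{i\ne 1}\Z_l\Bigr)\times l\Z_l,$$
placed in the $i=1$ slot. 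Consequently $y\in\Z_l[G]$ generates this annihilator as an ideal if and only if $\omega^i(y)\in\Z_l^\times$ for every $i\ne 1$, and $\omega(y)\in l\Z_l\setminus l^2\Z_l$.

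Next I would apply this criterion to $y:=1-{\bf b}\cdot\sigma_{\bf b}^{-1}$. Since $\sigma_{\bf b}$ corresponds to $\overline{\bf b}$ under the cyclotomic identification $G\cong(\Z/l\Z)^\times$, we have $\omega^i(\sigma_{\bf b})=\omega(\overline{\bf b})^i$, and thus
$$\omega^i(y)\;=\;1-{\bf b}\,\omega(\overline{\bf b})^{-i},\qquad i=0,1,\dots,l-2.$$
For $i\ne 1$, reducing modulo $l$ gives $\omega^i(y)\equiv 1-\overline{\bf b}^{\,1-i}\pmod l$, which is a unit in $\Z_l$ precisely when $\overline{\bf b}^{\,1-i}\not\equiv 1\pmod l$ for every such $i$. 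As $i$ ranges over $\{0,2,3,\dots,l-2\}$, the exponents $1-i$ run through a complete set of nonzero residues modulo $l-1$, so the unit condition for all $i\ne 1$ is equivalent to $\overline{\bf b}$ being a primitive root modulo $l$, i.e.\ to condition (1).

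Finally, for the $i=1$ component, $\omega(y)=1-{\bf b}\,\omega(\overline{\bf b})^{-1}$ is automatically in $l\Z_l$ (since ${\bf b}\equiv\omega(\overline{\bf b})\pmod l$ by the defining property of the Teichm\"uller lift), and lies outside $l^2\Z_l$ exactly when ${\bf b}\not\equiv\omega(\overline{\bf b})\pmod{l^2\Z_l}$, which is condition (2). Combining the two yields the claimed equivalence. I do not expect any serious obstacle here; the only mild care is in matching the character $\omega$ on $G$ to the Teichm\"uller character on $(\Z/l\Z)^\times$ and in verifying that ``$1-i$ ranges over all nonzero residues mod $l-1$'' as $i$ ranges over $\{0,2,\dots,l-2\}$, so that condition (1) captures precisely the primitive root condition.
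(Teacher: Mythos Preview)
Your proof is correct and follows essentially the same approach as the paper: decompose $\Z_l[G]$ via the characters $\omega^i$, identify the annihilator of $\mu_l$ as $\Z_l\oplus l\Z_l\oplus\Z_l\oplus\cdots\oplus\Z_l$, and then translate the generator condition into the two divisibility statements for $1-{\bf b}\,\omega(\overline{\bf b})^{-i}$. Your write-up is in fact slightly more detailed than the paper's sketch, explicitly verifying that the exponents $1-i$ hit all nonzero residues mod $l-1$ and spelling out why the $i=1$ component automatically lands in $l\Z_l$.
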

\begin{proof}[Proof (sketch.)] Note that there is a $\Z_l$--algebra isomorphism
$$\Z_l[G(\Bbb Q(\mu_l)/\Bbb Q)]\simeq \bigoplus_{i=0}^{l-2}\Z_l,$$
sending $\sigma\to (\omega^i(\sigma))_i$, for all $\sigma\in\Z_l[G(\Bbb Q(\mu_l)/\Bbb Q)]$. Under this isomorphism
the ideal ${\rm Ann}_{\Bbb Z_l[G(\Bbb Q(\mu_l)/\Bbb Q)]}(\mu_l)$ is sent into $\Z_l\oplus l\Z_l\oplus \Z_l\oplus\dots\oplus \Z_l$. Consequently,
the group ring element $(1-{\bf b}\cdot\sigma_{\bf b}^{-1})$ generates  ${\rm Ann}_{\Bbb Z_l[G(\Bbb Q(\mu_l)/\Bbb Q)]}(\mu_l)$ if and only if
\begin{equation}\label{b-divisibility}l\,||\,(1-{\bf b}\cdot\omega(\overline{\bf b})^{-1}), \qquad l\nmid (1-{\bf b}\cdot\omega(\overline{\bf b})^{-i}), \text{ for all } i\not\equiv 1\mod (l-1)\end{equation}
These are exactly conditions (2) and (1) in the Lemma, respectively.
\end{proof}

\begin{remark}\label{more-b-divisibility}
If ${\bf b}$ is chosen as above, then we also have
$$w_{n+1}(\Q)\Z_l=(1-{\bf b}^{n+1})\Z_l,$$
for all $n\in\Z_{\geq 0}$, as one can easily prove based on relations \eqref{b-divisibility}.
\end{remark}

 Let $n$ be an odd natural number, with $l\nmid n$. Let $v_0$ be a rational prime such that
$$v_0\equiv 1\mod l,\quad v_0\nmid {\bf b}, \quad v_l((1 - {\bf b}^{n+1}) \zeta_{K} (-n))\leq v_l(|K_{2n-1} (k_{v_0})_l|).$$
Note that since $|K_{2n-1} (k_{v_0})_l|=v_0^n-1$, once $n$ and ${\bf b}$ are fixed the set of such primes $v_0$ has positive density, as a consequence of Chebotarev's density theorem.
\medskip

In this context, our goal is to analyze the injectivity of the map
$$
\Lambda_{v_0} \, :\, K_{2n-1}(k_{v_0})_l \rightarrow K_{2n} (\Q)_l
.$$
We resume the notations of \S4. In particular, if $v_l({v_0}^n-1)=k$, then $E:=\Bbb Q(\mu_{l^k})$ and
$w$ is a prime sitting above $v_0$ in $E$. Since in this case the exponent $\gamma_l=(1-l^n)^{-1}$ lies in $\Z_l^\times$ and it does not affect injectivity, in order to simplify notations we work with the following slightly
modified definition of $\Lambda_{v_0}.$
\begin{equation}\label{lambda}\Lambda_{v_0}(\xi_{v_0})=b (Tr_{E/\Q} (\lambda_{{\bf b}, {\bf f}_{E}} (w) \ast
\beta(\xi_{l^k})^{\ast\, n}))=Tr_{E/\Q} (b (\lambda_{{\bf b}, {\bf f}_{E}} (w) \ast
\beta(\xi_{l^k})^{\ast\, n})),\end{equation}
where $\xi_{v_0}$ is the distinguished generator of $K_{2n-1}(k_{v_0})_l$ picked in \S4. Note that since $w\nmid{\bf b}$,
$\lambda^\ast_{{\bf b}, {\bf f}_{E}} (w)=\lambda_{{\bf b}, {\bf f}_{E}} (w)$ in this case.

\medskip

Now, let us note that since $v_l({v_0}-1)>0$,  $v_l({v_0}^n-1)=k$ and $v_l(n)=0$, we have $v_l({v_0}-1)=k$. Consequently, $v_0$ splits completely in $E$ and $q_w={v_0}$. Therefore,
the arguments at the beginning of \S4.2 (see equality \eqref{image-transfer})
imply that all arrows in the following commutative diagram are isomorphisms.
$$\xymatrix{
K_{2n} (k_w, \, \Z/l^k) \ar@{>}[r]^{b_w}_{\cong}\ar@{>}[d]_{Tr_{w/v_0}}^{\cong} &
K_{2n-1} (k_{w})_l \ar@{>}[d]_{Tr_{w/v_0}}^{\cong} \\
K_{2n} (k_{v_0}, \, \Z/l^k) \ar@{>}[r]^{b_{v_0}}_{\cong} & K_{2n-1} (k_{v_{0}})_l}$$

\noindent
Consider the following commutative diagram:

$$\xymatrix{
K_{2n+1} (E, \, \Z/l^k) \ar@{>}[d]_{Tr_{E/K}} &
K_{2n} (k_{w}, \, \Z/l^k) \ar@{>}[l]_{\Lambda_{{w, \, l^k}}} \ar@{>}[d]_{Tr_{w/v_0}}^{\cong} \\
K_{2n+1} (K, \, \Z/l^k) & K_{2n} (k_{v_0}, \, \Z/l^k) \ar@{>}[l]_{\Lambda_{{v_{0}, \,  l^k}}}}$$

\noindent
where the horizontal arrows are defined as follows:

\begin{equation}
\Lambda_{w, \, l^{k}} (\beta(\xi_{l^k})^{\ast\, n}) \,:= \, \lambda_{{\bf b}, {\bf f}_{E}} (w) \ast
\beta(\xi_{l^k})^{\ast\, n},
\label{definition of Lambda w mod lk}
\end{equation}
\begin{equation}
\Lambda_{v_{0}, \, l^{k}} (Tr(\beta(\xi_{l^k})^{\ast\, n})) \, := \,
Tr_{E/\Q} (\lambda_{{\bf b}, {\bf f}_{E}} (w) \ast \beta(\xi_{l^k})^{\ast\, n})
\label{definition of Lambda v0 mod lk}
\end{equation}
\noindent So, we can rewrite
\begin{equation}\label{new expression Lambda}\Lambda_{v_0}(\xi_{v_0})=b\circ Tr_{E/K}\circ\Lambda_{w, l^k}(\beta(\xi_{l^k})^{\ast n})=Tr_{E/\Q}\circ b\circ\Lambda_{w, l^k}(\beta(\xi_{l^k})^{\ast n}).\end{equation}

\medskip

\begin{remark} Note that in the case under consideration, we have
\begin{equation}
\Theta_{0} ({\bf b}, {\bf f}_E) = (1 - {\bf b}\cdot\sigma_{\bf b}^{-1})\cdot \sum_{a\in\left(\Bbb Z/l^k\right)^\times}\zeta_{l^k}(\sigma_a, 0)\cdot\sigma_{a}^{-1} =
\nonumber\end{equation}
\begin{equation}
= (1 - {\bf b}\cdot \sigma_{\bf b}^{-1}) \cdot\sum_{a\in\left(\Bbb Z/l^k\right)^\times} (\frac{1}{2} - \frac{a}{l^k}) \cdot\sigma_{a}^{-1},
\label{Theta 0 for lk}\end{equation}
where the second sum above is taken with respect to all $1\leq a<l^k$, with $l\nmid a$.
The projection of $\Theta_{0} ({\bf b}, {\bf f}_E)$ to $\Bbb Z[G(E_1/\Bbb Q)]$, where $E_1:=\Bbb Q(\mu_l)$,  is given by
$$\Theta_{0} ({\bf b}, {\bf f}_{E_1})=(1 - {\bf b}\cdot \sigma_{\bf b}^{-1}) \cdot \sum_{a\in\left(\Bbb Z/l\right)^\times} (\frac{1}{2} - \frac{a}{l}) \cdot\sigma_{a}^{-1}.$$
Via the first condition in \eqref{b-divisibility}, it is clear that $\omega(\Theta_{0} ({\bf b}, {\bf f}_{E_1}))\not\equiv 0\mod l$. Thus,
$$\Theta_{0} ({\bf b}, {\bf f}_{E_1})\not\in l\Bbb Z_l[G(E_1/\Bbb Q)].$$
This implies in particular that
\begin{equation}\label{Theta 0 for lk}\Theta_{0} ({\bf b}, {\bf f}_{E})\not\in l\Bbb Z_l[G(E/\Bbb Q)].\end{equation}
\end{remark}
\bigskip

Now, we begin our study of the injectivity of $\Lambda_{v_0}$, for the chosen $l, n, {\bf b}, v_0$. We will use
expression \eqref{new expression Lambda} for $\Lambda_{v_0}$. As Steps I and II below will show, the map
$b\circ\Lambda_{w, l^k}$ turns out to be injective unconditionally. The point where Iwasawa's cyclicity conjecture
(Conjecture \ref{Iwasawa conjecture}) comes into play is when one analyzes the injectivity of $Tr_{E/\Q}$ restricted to ${\rm Im}(b\circ\Lambda_{w, l^k})$,
as it will be made clear in Step III below.

\medskip
\noindent
{\bf Step I.} The unconditional injectivity of $\Lambda_{w, \, l^{k}}.$
\medskip

\noindent
By the definition of $\lambda_{{\bf b}, {\bf f}_E} (w)$, its Arakelov divisor is
\begin{equation}
{\rm div}_E(\lambda_{{\bf b}, {\bf f}_E} (w))=(1 - {\bf b}\cdot\sigma_{\bf b}^{-1})) \sum_{a\in(\Bbb Z/l^k)^\times} (\frac{1}{2} - \frac{a}{l^k}) \cdot\sigma_{a}^{-1} (w).
\end{equation}
\noindent Since $v_0$ splits completely in  $E/\Bbb Q$, (\ref{Theta 0 for lk}) implies that
\begin{equation}\label{Arakelov}{\rm div}_E(\lambda_{{\bf b}, {\bf f}_E} (w))\notin \,\,
l\cdot {\rm Div}_{S_\infty}^0(E).
\end{equation}
Consequently, we have
\begin{equation}
\lambda_{{\bf b}, {\bf f}_E} (w) \notin \mu_E\cdot (E^{\times})^{l} \, .
\label{lambda b f not in E l}
\end{equation}
Consider the following commutative diagram.
\begin{equation}\label{Dwyer-Friedlander diagram}\xymatrix{
K_{2n+1} (E, \, \Z/l^k) \ar@{>}[d]_{} &
K_{2n} (k_{w}, \, \Z/l^k) \ar@{>}[l]_{\Lambda_{{w, \, l^k}}} \ar@{>}[d]_{}^{\cong} \\
H^1 (E, \, \Z/l^k (n+1)) & H^0 (k_{w}, \, \Z/l^k (n)) \ar@{>}[l]_{\Lambda_{{w, \,  l^k}}^{et}}}
\end{equation}
The vertical arrows are the Dwyer-Friedlander maps \cite{DFST} and the right vertical arrow
is an isomorphism \cite[Corollary 8.6]{DFST}. The bottom
horizontal arrow is defined by
\begin{equation}
\Lambda_{w, \, l^{k}}^{et} (\xi_{l^k}^{\otimes\, n}) \,:= \, \lambda_{{\bf b}, {\bf f}_E} (w) \cup
\xi_{l^k}^{\otimes\, n}.
\label{definition of Lambda w mod lk}
\end{equation}
Since $\xi_{l^k} \in \mu_E$ it is clear, by (\ref{lambda b f not in E l}), that the map $\Lambda_{w, \, l^{k}}^{et}$ is injective. Hence
the map $\Lambda_{w, \, l^{k}}$ is also injective.

\medskip

\noindent
{\bf Step II.} The unconditional injectivity of the map $(b\circ\Lambda_{w, \, l^{k}}).$
\medskip

\noindent
Consider the following commutative diagram with exact rows and surjective vertical arrows. (Surjectivity
follows from \cite[Theorem 8.5]{DFST}. See also \cite[Thm. 4, p. 278]{So1}.)
\begin{equation}\label{Dwyer-Friedlander diagram 2}\small{\xymatrix{
0 \ar[r]^{} & K_{2n+1} (E) / l^k \ar[r]^{} \ar@{>>}[d]^{}  &
 K_{2n+1} (E, \Z/l^k)  \ar[r]^{b} \ar@{>>}[d]^{} &
K_{2n} (E)[l^k]  \ar@{>>}[d]^{}\ar[r]  & 0\\
0 \ar[r]^{} & H^{1} (E, \, \Z_{l} (n{+}1)/l^k \ar[r]^{} &
 H^{1} (E, \, \Z/l^k (n{+}1))  \ar[r]^{b^{et}} &
 H^{2} (E, \, \Z_{l} (n{+}1)) [l^k]\ar[r]  &0 }}
\end{equation}
\noindent
By Step I the middle vertical arrow in \eqref{Dwyer-Friedlander diagram 2} induces
an isomorphism
\begin{equation}
\text{Im}\, \Lambda_{w, \, l^{k}}\cong
\text{Im}\, \Lambda_{w, \, l^{k}}^{et}.
\label{Im Lambda is isom. to Im Lambda et}
\end{equation}
Hence (\ref{Im Lambda is isom. to Im Lambda et}) and (\ref{Dwyer-Friedlander diagram 2})
give the isomorphism
\begin{equation}\label{Im Lambda is isom. to Im Lambda et itersected with ker b}
\text{Im}\, \Lambda_{w, \, l^{k}} \, \cap \, \text{Ker}\, b \quad \cong \quad
\text{Im}\, \Lambda_{w, \, l^{k}}^{et} \, \cap \, \text{Ker}\, b^{et}.
\end{equation}

Let $S$ denote the set of $l$--adic primes in $E$. Since
$H^1 (\mathcal{O}_{E, S}, \, \Z_l (n+1)) = H^1 (E, \, \Z_l (n+1))$
then $\text{Ker}\, b^{et} \simeq H^1 (\mathcal{O}_{E, S}, \, \Z_l (n+1)) / l^k .$
Hence, isomorphism
(\ref{Im Lambda is isom. to Im Lambda et itersected with ker b})
and (\ref{isom. of plus part of divided cohom. group with twisted roots of unity}) in the Appendix imply
that for an element of the form $\lambda_{{\bf b}, {\bf f}_E} (w)^r \ast \beta(\xi_{l^k})^{\ast\, n}$ we have an equivalence
\begin{equation}\label{kernel}\lambda_{{\bf b}, {\bf f}_E} (w)^r \ast \beta(\xi_{l^k})^{\ast\, n} \in \text{Ker}(b)\Longleftrightarrow \lambda_{{\bf b}, {\bf f}_E} (w)^r \cup \xi_{l^k}^{\otimes\, n} = ({\xi_{l^k}}^{\otimes (n+1)})^s,\end{equation}
for some $s \in \N.$ (When applying \eqref{isom. of plus part of divided cohom. group with twisted roots of unity} it is important to note that since $n$ is odd and $\lambda_{\bf b, f_E}(w)^{1+j}=1$, we have $\lambda_{{\bf b}, {\bf f}_E} (w)^r \cup \xi_{l^k}^{\otimes\, n}\in H^1(E, \Z_l/l^k(n+1))^+$.)

Now, the right hand side of \eqref{kernel} is equivalent to $\lambda_{{\bf b}, {\bf f}_E} (w)^r \in \langle\xi_{l^k}\rangle\cdot E^{\times \, l^k}.$
Combined with \eqref{Arakelov}, this implies that $l^k$ divides $r$. Consequently, $\lambda_{{\bf b}, {\bf f}_E} (w)^r \ast \beta(\xi_{l^k})^{\ast\, n} = 0$. Therefore $\text{Ker}(b\circ\Lambda_{w, l^k})=0$.

\medskip
\noindent
{\bf Step III.} The question of injectivity for the map $Tr_{E/\Q}$ restricted to $\text{Im}\,  (b \circ \Lambda_{w, \, l^{k}}).$
\medskip

\noindent At this point we use a trick which allows us to assume that $k=1$.
First, let us note that $\Lambda_{v_0}$ is injective if and only if
$\Lambda_{v_0}$ restricted to the unique subgroup $K_{2n-1}(k_{v_0})[l]$
of order $l$ in $K_{2n-1}(k_{v_0})_l$ is injective.
Let $$\eta_{v_0} := \xi_{v_0}^{l^{k-1}}.$$
Observe that $\eta_{v_0}$ is a generator of $K_{2n-1}(k_{v_0})[l]$.
Observe that in our case ${\bf f}_{\Q(\mu_l)}=l\Z$ and all the $l$--adic primes in $\Q(\mu_l)$ are
totally ramified in $\Q(\mu_{l^{\infty}}) / \Q(\mu_l).$ Also, recall that $v_0\nmid{\bf b}$. Hence if we apply
Lemma \ref{norm-lemma}, Remark \ref{support of f prime} and Lemma \ref{HilbertChebotarev}(3)
(with $K=F=\Bbb Q$ and $E_1=\Q(\mu_l)$), we obtain:
$$N_{E/E_1}\lambda_{{\bf b}, {\bf f}_{E}} (w) =
\lambda_{{\bf b}, {\bf f}_{E_1}} (w_1)$$
where $w_1$ is a prime of $\mathcal{O}_{E_1}$ below $w.$
Hence
$$\Lambda_{v_0} (\eta_{v_0}) =
\Lambda_{v_0} (\xi_{v_0})^{l^{k-1}} = \, \, b (Tr_{E/\Q} (\lambda_{{\bf b}, {\bf f}_{E}} (w) \ast
\beta(\xi_{l^k})^{\ast\, n}))^{l^{k-1}} = $$
$$ = \,\,
b (Tr_{E_1 / F} (N_{E/ E_1}\lambda_{{\bf b}, {\bf f}_{E}} (w) \ast
\beta(\xi_{l})^{\ast\, n})) \,\, =$$
$$ = \,\, b (Tr_{E_1 / \Q} (\lambda_{{\bf b}, {\bf f}_{E_1}} (w_1) \ast
\beta(\xi_{l})^{\ast\, n})).$$
So the injectivity of $\Lambda_{v_0}$ is equivalent to the injectivity of
the following map:

\begin{equation}
\Lambda_{v_0} \, :\, K_{2n-1}(k_{v_0}) [l] \rightarrow K_{2n} (\Q) [l]
\label{Lambda v0 on [l]}
\end{equation}
\begin{equation}
\Lambda_{v_0} (\eta_{v_0}) :=  b (Tr_{E_1 / \Q} (\lambda_{{\bf b}, {\bf f}_{E_1}} (w_1) \ast
\beta(\xi_{l})^{\ast\, n})).
\label{Lambda v0 (eta v0)}
\end{equation}

If one compares \eqref{Lambda v0 (eta v0)} and \eqref{lambda}, it becomes clear that it is sufficient to set $k =1$ and $E = \Q (\mu_l)$ in all of the above considerations.
We will use this notation for the rest of the section.  Since in this case the natural map
$K_{2n} (\Q)_l \rightarrow K_{2n} (E)_l$ is injective (as $l\nmid |G(E/Q)|$), the injectivity of $\Lambda_{v_0}$ is equivalent to
$${\rm ord}\,(N_l \circ b \circ \Lambda_{w, \, l}) (\beta(\xi_{l})^{\ast\, n})=l,$$
where $N_l := \sum_{c=1}^{l-1} \, \sigma_c.$ The reader should note that we have already showed (in Steps I and II above for $k=1$) that
$${\rm ord}\,(b \circ \Lambda_{w, \, l}) (\beta(\xi_{l})^{\ast\, n})=l.$$

Let $\omega\, :\, G(E/\Q) \rightarrow \Z_{l}^{\times}$ denotes the Teichm{\" u}ller character and let $e_{\omega^i}\in\Bbb Z_l[G]$ denote the
group ring idempotent corresponding to $\omega^i$, for all $i\in\Bbb Z$. Note that
\begin{equation}
e_{\omega^i} \equiv \sum_{c=1}^{l-1} \, - \, c^i \, \sigma_{c}^{-1} \mod l\Bbb Z_l[G].
\end{equation}
Also, note that $E^\times/E^{\times l}$ has a natural $\Bbb Z_l[G]$--module structure. For simplicity, if $x\in E^\times$, we will
use the notation
$$e_{\omega^i}\cdot x:= e_{\omega^i}\cdot (x\mod E^{\times l})$$
and view this as an element in $E^\times/E^{\times l}$, for all $i\in \Bbb Z$.
In light of the above notation, we have the following equalities:
\begin{equation}
N_l ( \Lambda_{w, \, l} (\beta(\xi_{l})^{\ast\, n}) ) =
\sum_{c=1}^{l-1} \, \sigma_c (\lambda_{{\bf b}, {\bf f}_{E}} (w) \ast
\beta(\xi_{l})^{\ast\, n}) =
\label{product of Gauss sums times power of Bott}\end{equation}
\begin{equation}
 = \bigl(\prod_{c=1}^{l-1}
\lambda_{{\bf b}, {\bf f}_{E}} (\sigma_{c} (w))^{c^n}\bigr) \ast
\beta(\xi_{l})^{\ast\, n} = - e_{\omega^{- n}} (\lambda_{{\bf b}, {\bf f}_{E}} (w)) \ast
\beta(\xi_{l})^{\ast\, n}.
\nonumber\end{equation}

\begin{equation}
N_l ( \Lambda_{w, \, l}^{et} (\xi_{l}^{\otimes\, n}) ) =
\sum_{c=1}^{l-1} \, \sigma_c (\lambda_{{\bf b}, {\bf f}_{E}} (w) \cup
\xi_{l}^{\otimes\, n}) =
\label{product of Gauss sums times power of root of unity}\end{equation}
\begin{equation}
 = \bigl(\prod_{c=1}^{l-1}
\lambda_{{\bf b}, {\bf f}_{E}} (\sigma_{c} (w))^{c^n}\bigr) \cup
\xi_{l}^{\otimes \, n} = - e_{\omega^{- n}} (\lambda_{{\bf b}, {\bf f}_{E}} (w)) \cup
\xi_{l}^{\otimes \, n}.
\nonumber\end{equation}
Now, \eqref{product of Gauss sums times power of Bott} and \eqref{product of Gauss sums times power of root of unity} combined with the arguments at the
end of Step II (applied to $k:=1$) lead to the following equivalence.
\begin{equation}\label{kernel-characters} N_l ( \Lambda_{w, \, l} (\beta(\xi_{l})^{\ast\, n}) )\in\text{Ker}(b) \Longleftrightarrow e_{\omega^{- n}} (\lambda_{{\bf b}, {\bf f}_{E}} (w))\in (\mu_l\cdot E^{\times l})/E^{\times l}.\end{equation}
Now, we are ready to prove the main result of this section.
\begin{theorem} \label{another look at the Iwasawa conj.} Let $l > 2$ be a prime number. Let $n\geq 1$ be an odd integer, such that $l\nmid n$.
Let ${\bf b}\geq 1$ be an integer satisfying the
two conditions in Lemma \ref{b-lemma}. As above, let $E:=\Q(\mu_l)$. Then, the following are equivalent.
\begin{enumerate}
\item
$D (n)_l:=div\left(K_{2n}(\Q)_l\right)$ is cyclic.
\item
 There exists an $O_E$--prime $w$ for which
 $$e_{\omega^{- n}} (\lambda_{{\bf b}, {\bf f}_{E}} (w))\not\in(\mu_l\cdot E^{\times l})/E^{\times l}.$$
 and which satisfies the following additional hypotheses: it is coprime to ${\bf b}$, it sits above a rational prime $p$ which splits completely in $E / \Q$, and
$$|(1-{\bf b}^{n+1})\cdot \zeta_{\Bbb Q} (-n)|^{-1}_{l} \,\, | \,\, p^n - 1.$$
\item $A^{[l-1-n]}:=e_{\omega^{-n}}\cdot(Cl(\CO_E)\otimes\Z_l)$ is cyclic.
\end{enumerate}
 \end{theorem}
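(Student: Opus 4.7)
My plan is to establish the chain (1) $\Leftrightarrow$ (2) $\Leftrightarrow$ (3); as a byproduct, the combined equivalence (1) $\Leftrightarrow$ (3) delivers the promised new proof of Proposition \ref{Iwasawa in terms of cyclicity}.

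For (1) $\Leftrightarrow$ (2), I would appeal to Corollary \ref{a different view at the Iwasawa conj.}, which reduces the cyclicity of $D(n)_l$ to the existence of a rational prime $p$ (satisfying the numerical hypotheses of (2)) for which $\Lambda_p : K_{2n-1}(\F_p)_l \to K_{2n}(\Q)_l$ is injective. Steps I, II, and III already carried out in this section have, after the reduction to $[l]$-torsion via the norm-compatibility relations of Lemma \ref{norm-lemma} that replaces $\Q(\mu_{l^k})$ by $E = \Q(\mu_l)$, translated this injectivity to the assertion \eqref{kernel-characters} that $e_{\omega^{-n}}(\lambda_{{\bf b},{\bf f}_E}(w)) \notin (\mu_l \cdot E^{\times l})/E^{\times l}$ for a prime $w$ of $E$ above $p$. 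Splicing these two equivalences gives (1) $\Leftrightarrow$ (2).

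The substantive step is (2) $\Leftrightarrow$ (3), which I would prove by converting the Kummer-theoretic condition into a condition on the class $e_{\omega^{-n}}[w] \in A^{[l-1-n]}$. The divisor identity $\mathrm{div}_E(\lambda_{{\bf b},{\bf f}_E}(w)) = \Theta_0({\bf b},{\bf f}_E) \cdot w$ yields
$$\mathrm{div}_E(e_{\omega^{-n}}\lambda_{{\bf b},{\bf f}_E}(w)) = \omega^{-n}(\Theta_0({\bf b},{\bf f}_E)) \cdot e_{\omega^{-n}} w.$$
Since $n$ is odd with $l \nmid n$ and, in the generic range, $\omega^{-n} \neq \omega$ (the exceptional case $n = l-2$ being handled by the classical vanishing $A^{[1]} = 0$), the $\omega^{-n}$-component of the unit group vanishes, $(\mathcal{O}_E^\times \otimes \Z_l)^{\omega^{-n}} = 0$, so the divisor map is injective on the relevant isotypic piece. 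Setting $e := v_l(\omega^{-n}(\Theta_0({\bf b},{\bf f}_E))) = v_l|A^{[l-1-n]}|$ (the second equality being Mazur--Wiles / the Main Conjecture, known for abelian extensions of $\Q$), the exact sequence of $\omega^{-n}$-components of $0 \to \mathcal{O}_E^\times \to E^\times \to I_E \to Cl(\mathcal{O}_E) \to 0$ yields that $e_{\omega^{-n}}(\lambda_{{\bf b},{\bf f}_E}(w)) \in (\mu_l \cdot E^{\times l})/E^{\times l}$ if and only if $l^{e-1} \cdot e_{\omega^{-n}}[w] = 0$ in $A^{[l-1-n]}$. Since Stickelberger already forces $l^e \cdot e_{\omega^{-n}}[w] = 0$, the condition in (2) is equivalent to $e_{\omega^{-n}}[w]$ having order precisely $l^e = |A^{[l-1-n]}|$. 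The direction (2) $\Rightarrow$ (3) is then immediate: the existence of such an element of order $|A^{[l-1-n]}|$ forces the group to be cyclic. For (3) $\Rightarrow$ (2), I would apply Lemma \ref{HilbertChebotarev} to the Hilbert class field $E^H$ of $E$ (noting $E^H \cap E(\mu_{l^\infty}) = E$ under the simplifying hypothesis that $E(\mu_{l^\infty})/E$ is totally ramified at $l$ while $E^H/E$ is unramified) to produce a prime ${\bf q}$ of $\CO_E$ whose ideal class is a prescribed preimage of a generator of $A^{[l-1-n]}$, whose restriction $p := {\bf q} \cap \Z$ splits completely in $E/\Q$, and for which $l^m \| p - 1$ with $m$ sufficiently large to enforce both coprimality to ${\bf b}l$ and $|(1-{\bf b}^{n+1})\zeta_\Q(-n)|_l^{-1} \mid p^n - 1$.

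The main technical hurdle lies precisely in this Chebotarev step for (3) $\Rightarrow$ (2): one must simultaneously arrange the class condition, the splitting behavior of $p$ in $E$, and the residual divisibility at $l$. Lemma \ref{HilbertChebotarev} is engineered for exactly this task; the cyclicity of $A^{[l-1-n]}$ enters decisively through the requirement that a preimage in $Cl(\mathcal{O}_E) \otimes \Z_l$ of a generator of $A^{[l-1-n]}$ can be realized as the class of a prime ideal meeting all the auxiliary constraints.
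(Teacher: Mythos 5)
Your proof follows the paper's own route: (1) $\Leftrightarrow$ (2) via the injectivity analysis of $\Lambda_p$ carried out in Steps I--III of \S6, and (2) $\Leftrightarrow$ (3) via the Mazur--Wiles formula, the $\omega^{-n}$-component of the divisor exact sequence, and a Chebotarev argument. Your reformulation of (2) $\Leftrightarrow$ (3) in terms of the order of $e_{\omega^{-n}}[w]$ in $A^{[l-1-n]}$ is an equivalent packaging of the paper's divisor-inclusion condition \eqref{ciclicity-divisors}.

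However, there is a genuine gap in your argument for (1) $\Rightarrow$ (2). Corollary \ref{a different view at the Iwasawa conj.} does \emph{not} produce a rational prime $p$ that splits completely in $E = \Q(\mu_l)$: its condition (3) only requires $|(1-b^{n+1})\zeta_\Q(-n)|^{-1}_l \mid p^n - 1$, and when $\gcd(n, l-1) > 1$ this does not force $p \equiv 1 \bmod l$. The completely-split hypothesis is indispensable: it is built into the whole of \S6 (which assumes $v_0 \equiv 1 \bmod l$ so that $v_0$ splits completely in $\Q(\mu_{l^k})$), and it is also explicitly required in statement (2) of the theorem and in the application of \eqref{kernel-characters}. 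The paper therefore invokes Lemma \ref{infinitely many v} together with Remark \ref{more-b-divisibility} for this direction: given $D(n)_l$ cyclic, it produces, for $m \gg 0$, infinitely many primes $v$ with $\Lambda_v$ injective and $l^m \mid Nv - 1$, and $l^m \mid p - 1$ with $m \geq 1$ is exactly what forces $p$ to split completely in $E$. The fix is to cite Lemma \ref{infinitely many v} in place of Corollary \ref{a different view at the Iwasawa conj.} for the forward implication; the reverse (2) $\Rightarrow$ (1) is then Theorem \ref{generalize Iwasawa} as before.

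A minor remark on (2) $\Leftrightarrow$ (3): your special pleading for the case $\omega^{-n} = \omega$ (which occurs for all $n \equiv -1 \bmod (l-1)$, not only $n = l - 2$) is avoidable. Passing to $(E^\times/\mu_l)\otimes\Z_l$ from the outset, as the paper does in exact sequence \eqref{omega-sequence}, makes the divisor map injective on $\omega^{-n}$-components uniformly in $n$ odd, so no case distinction is needed and no appeal to $A^{[1]} = 0$ is required.
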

\begin{proof} {\bf (1) $\Longleftrightarrow$ (2).} Remark \ref{more-b-divisibility}
and Lemma \ref{infinitely many v} applied to $(\Q/\Q, n, l, {\bf b})$ show that $D(n)_l$ is cyclic if and only if there exists a
rational prime satisfying the additional hypotheses in (2) for which the map $\Lambda _p$ associated
to $(\Q/\Q, n, l, {\bf b}, p)$ is injective.

Now, since for an $\CO_E$--prime $w|p$, we have an equality
$$(N_l \circ b \circ \Lambda_{w, \, l}) (\beta(\xi_{l})^{\ast\, n}) =
b (N_l ( \Lambda_{w, \, l} (\beta(\xi_{l})^{\ast\, n}))),$$
\eqref{product of Gauss sums times power of Bott} and \eqref{kernel-characters} imply that the injectivity of $\Lambda_p$ holds if and only if
$$e_{\omega^{- n}} (\lambda_{{\bf b}, {\bf f}_{E}} (w))\not\in(\mu_l\cdot E^{\times l})/E^{\times l},$$
for some $\CO_E$--prime $w$ sitting above $p$.
\medskip

{\bf (2) $\Longleftrightarrow$ (3).} Our choice of ${\bf b}$ (see Lemma 6.1) combined with a theorem of Mazur--Wiles (see Theorem 8.8 and the Remark which follows in the Appendix of \cite{Lang}) gives us the following equalities.
\begin{equation}\label{strong Kummer-Ribet}
\mid A^{[l-1-i]}\mid =[\Z_l: \omega^{-i}(\Theta_0({{\bf b}, {\bf f}_E}))\Z_l], \qquad\text{ for all odd $i$. }
\end{equation}
The reader should note that our choice of ${\bf b}$ implies that $\Theta_0({{\bf b}, {\bf f}_E})$ is a generator of the Stickelberger ideal in $\Z_l[G(E/\Q)]$, denoted by $\mathcal R_0$ in
the Remark following Theorem 8.8 in loc.cit. Also, the equality in the Remark in question can be easily extended to the case $\chi:=\omega$ as both sides are equal to $1$ in that case.

We consider the standard exact sequence of $\Z_l[G(E/\Q)]$--modules
$$\xymatrix{ 0\ar[r] &(E^\times/\CO_E^\times)\otimes\Z_l \ar[r]^{{\rm div}{\,\,}} &\ar[r] {\rm Div}(\CO_E)\otimes\Z_l\ar[r] & Cl(\CO_E)\otimes\Z_l\ar[r] &0,
}$$
where ${\rm Div}(\CO_E)$ is the non-archimedean part of the group ${\rm Div}_{S_\infty}(E)$ of Arakelov divisors of $E$, ${\rm div}$ is the  divisor map extended by $\Z_l$--linearity
to $(E^\times\otimes\Z_l)$  and the projection onto $Cl(O_E)\otimes\Z_l$ is the usual divisor--class map (taking $d\in {\rm Div}(\CO_E)$ to its class $\widehat d\in Cl(\CO_E)$) extended by $\Z_l$--linearity. Since $n$ is odd, we have an equality
$e_{\omega^{-n}}\cdot (\CO_E^\times\otimes\Z_l)=e_{\omega^{-n}}\cdot(\mu_l\otimes\Z_l)$. Consequently, by taking $\omega^{-n}$--components in the exact sequence above we obtain the following exact sequence of $\Z_l$--modules
\begin{equation}\label{omega-sequence}\xymatrix{ 0\ar[r] &e_{\omega^{-n}}(E^\times/\mu_l\otimes\Z_l) \ar[r]^{{\rm div}{\,\,}} &\ar[r] e_{\omega^{-n}}({\rm Div}(\CO_E)\otimes\Z_l)\ar[r] & A^{[l-1-n]}\ar[r] &0.
}\end{equation}

Now, according to \eqref{strong Kummer-Ribet}, $A^{[l-1-n]}$ is cyclic if and only if it contains an ideal class of order $|\omega^{-n}(\Theta_0({{\bf b}, {\bf f}_E}))|_l^{-1}$. Consequently, exact sequence \eqref{omega-sequence}
and Lemma \ref{HilbertChebotarev} applied to $E/\Q$ and a sufficiently large $m$, imply that $A^{[l-1-n]}$ is cyclic if and only if there exists an $O_E$--prime $w$ satisfying the additional hypotheses in (2), such that
\begin{equation}\label{ciclicity-divisors}
\omega^{-n}(\Theta_0({{\bf b}, {\bf f}_E}))\cdot e_{\omega^{-n}}(w\otimes 1)\in {\rm div}(e_{\omega^{-n}}(E^\times\otimes\Z_l))\setminus l\cdot {\rm div}(e_{\omega^{-n}}(E^{\times}\otimes\Z_l)).
\end{equation}
In the argument above, it is important to note that since $w$ sits over a rational prime $p$ which splits completely in $E/\Q$, we have $e_{\omega^{-n}}(w\otimes 1)\not\in l\cdot ({\rm Div}(\CO_E)\otimes\Z_l).$
However, by the definition of
$\lambda_{{\bf b}, {\bf f}_{E}} (w)$,
we have
\begin{equation}\label{divisor-lambda}{\rm div}(e_{\omega^{-n}}(\lambda_{{\bf b}, {\bf f}_{E}} (w)\otimes 1))=\omega^{-n}(\Theta_0({{\bf b}, {\bf f}_E}))\cdot e_{\omega^{-n}}(w\otimes 1).\end{equation}
Equality \eqref{divisor-lambda} combined with exact sequence \eqref{omega-sequence} shows that \eqref{ciclicity-divisors} is equivalent to
$$e_{\omega^{-n}}(\lambda_{{\bf b}, {\bf f}_{E}} (w)\otimes 1)\not\in e_{\omega^{-n}}(\mu_l\cdot E^{\times l}\otimes\Z_l).$$
This is clearly equivalent to $e_{\omega^{-n}}(\lambda_{{\bf b}, {\bf f}_{E}} (w))\not\in (\mu_l\cdot E^{\times l})/E^{\times l}$.
\end{proof}
\begin{remark} Note that for $n\not\equiv -1\mod (l-1)$, we have an equivalence
$$ e_{\omega^{- n}} (\lambda_{{\bf b}, {\bf f}_{E}} (w))\not\in(\mu_l\cdot E^{\times l})/E^{\times l}\Longleftrightarrow e_{\omega^{- n}} (\lambda_{{\bf b}, {\bf f}_{E}} (w))\ne 0,$$
since we have an obvious inclusion $(\mu_l\cdot E^{\times l})/E^{\times l}\subseteq e_{\omega^1}\cdot (E^\times/E^{\times l}).$
\end{remark}

%-----------\section{The Euler system}------------------

\section{An Euler system in the higher odd $K$--theory with coefficients}

In this section we construct an Euler System for the odd $K$-theory (with coefficients) of a CM abelian
extension of a totally real number field. Our construction generalizes those of \cite{Rubin} and \cite{BG1} to arbitrary totally real number fields
and it is quite different from that in \cite{BP}.
\medskip

As above, we fix a finite abelian CM extension $F/K$ of a totally real number field of conductor ${\bf f}$, a rational prime $l>2$ and a natural number $n\geq 1$.
Next, we fix an $\CO_K$--ideal ${\bf b}$ which is coprime to ${\bf f}l$.
We let ${\bf L} = {\bf l}_1 \cdot\dots\cdot {\bf l}_t$ run through all the products of mutually distinct
prime ideals ${\bf l}_1, \dots,{\bf l}_t$ of ${\mathcal O}_K$, coprime to $l\cdot{\bf bf}.$
Let
$F_{\bf L} := F K_{\bf L}$, where
$K_{\bf L}$ is the ray class field of $K$ of conductor ${\bf L}.$
Obviously, the conductor of the CM-extension $F_{\bf L} / K$
divides ${\bf L} {\bf f}.$ We let $F_{{\bf L}{l^k}} := F_{{\bf L}}(\mu_{l^k})$, for every $k \geq 0.$
\medskip

For each CM extension
$F_{{\bf L}{l^{k}}}/K$ we fix an $\CO_K$--ideal ideal ${\bf f}_{{\bf L}{l^{k}}}'$
such that
$${\rm Supp}({\bf f}_{{\bf L}{l^{k}}}') \,\, = \,\, {\rm Supp}({\bf f}) \cup {\rm Supp}({\bf L}) \cup {\rm Supp}(l\CO_K).$$
Also, we fix roots of unity $\xi_{l^k}\in F(\mu_{l^k})$ of order $l^k$, such that
$$\xi_{l^{k+1}}^l=\xi_{l^k},$$
for all $k\geq 0$. We let
$$\beta_{{\bf L}, k}:=\beta_{F_{{\bf L}l^k}}(\xi_{l^k})$$
be the corresponding Bott elements in $K_2(F_{{\bf L}l^k}, \Z/l^k)$, for all ${\bf L}$ and $k$ as above.
Next, we fix a prime $v$ in ${\mathcal O}_{F}$, such that $v \nmid l{\bf b}$.
For each $\bf L$ as above and each $k\geq 0$, we fix a prime $w_{k}(\bf L)$ of $O_{F_{{\bf L}l^k}}$ sitting above
$v$, such that $w_{k'}(\bf L')$ sits above $w_{k}(\bf L)$ whenever $l^k{\bf L}\mid l^{k'}{\bf L'}$. We let $v({\bf L}):=w_0({\bf L})$,
for all ${\bf L}$ as above.
\medskip

Let ${\bf f}^\ast_{{\bf L}{l^{k}}}$ denote the conductor of $F_{{\bf L}l^k}/K$ multiplied by all the $l$--adic primes in $K$. We will use the $l$--adic imprimitive Brumer--Stark elements $\{\lambda_{{\bf b}, {{\bf f}^\ast_{{\bf L}{l^{k}}}}}(w_{k}({\bf L}))\}_{\bf L}$ viewed as special elements in $\{K_1(F_{{\bf L}{l^{k}}})_l\}_{\bf L}$ to construct special elements
$\{\blambda_{v({\bf L}), k}\}_{\bf L}$ in the $K$--theory with coefficients $\{K_{2n+1}(F_{\bf L},
\Bbb Z/l^k)\}_{\bf L}$, for all $k>0$ as follows.

\begin{definition}\label{define Euler system} For all ${\bf L}$ and $k$ as above and $n\geq 0$ define
$\blambda_{v({\bf L}), k} \in
K_{2n +1} (F_{\bf L}, \, \Z/l^k)$ as follows.
$$\blambda_{v({\bf L}), k} \,\, := \,\,
Tr_{F_{{\bf L}{l^{k}}} / F_{\bf L}} ( \lambda_{{\bf b}, {{\bf f}^\ast_{{\bf L}{l^{k}}}}}(w_{k}({\bf L})) \ast
\beta_{{\bf L}, k}^{\ast \, n})^{ \gamma_{l}},
$$
where the operator $\gamma_l\in\Bbb Z_l[G(F_{\bf L}/K)]$ is given by \eqref{gamma_l}.
\end{definition}
\noindent The reader should note that in the definition above, for simplicity, our notation does not capture the dependence of $\gamma_{l}$ on ${{\bf L}}$.
\medskip

In what follows, we will apply the following three natural maps in K-theory: the transfer map
(\ref{def of norm on K groups of finite fields}), the reduction  modulo $w_k({\bf L})$ map (\ref{reduction map}), the
boundary map in the Quillen localization sequence for $K$--theory with coefficients (\ref{boundary map}) and the reduction modulo $l^{k}$ map
\eqref{reduction mod l map}:
\begin{eqnarray}
\label{def of norm on K groups of finite fields}
Tr_{w_{k}({\bf L})/w_{0}({\bf L})} \, :\, K_{2 n} (k_{w_{k}(\bf L)}, \, \Z/l^k) \rightarrow
K_{2 n} (k_{w_{0}(\bf L)}, \, \Z/l^k)\\
\label{reduction map}
\pi_{w_{k}({\bf L})} \, :\, K_{2 n} ({\mathcal O}_{F_{{\bf L}{l^k}}, S}, \, \Z/l^k) \,
\rightarrow  K_{2 n} (k_{w_{k}(\bf L)}, \, \Z/l^k) \\
\label{boundary map}
\partial_{F_{{\bf L}}} \, : \, K_{2 n+1} (F_{{\bf L}}, \, \Z/l^k) \,\,
\stackrel{}{\longrightarrow} \,\, \bigoplus_{v} K_{2 n} (k_v, \, \Z/l^k)\\
\label{reduction mod l map}
r_{k'/k}: K_{2n +1} (F_{\bf L}, \, \Z/l^{k'})\to K_{2n +1} (F_{\bf L}, \, \Z/l^k), \qquad\text{ for all }k'\geq k.
\end{eqnarray}
The following theorem captures the main properties of the special elements defined above. In particular, part (3) of the theorem below
simply states the fact that these special elements form an Euler system in $\{K_{2n+1}(F_{\bf L},
\Bbb Z/l^k)\}_{\bf L}$, for all $k>0$.
\begin{theorem}\label{Theorem ES2}
Assume that ${\bf L}^{\prime} =
{\bf L} {\bf l}^{\prime}$, for an $\CO_K$--prime ${\bf l'}\nmid {\bf L}$. Let us denote
$N_{{\bf L}} := Tr_{k_{w_k({\bf L})}/k_{w_0({\bf L})}}.$
\begin{itemize}

\item[{(1)}] If $k^{\prime} \geq k$ and  $w_{k}({\bf L})$ splits completely in $F_{{\bf L}l^{k'}},$ then:
$$r_{k^{\prime}/k} (\blambda_{v({\bf L}), k'}) =
\blambda_{v({\bf L}), k}.$$
\item[{(2)}]
$\partial_{F_{{\bf L}}} (\blambda_{v({\bf L}), k}) =
N_{{\bf L}}(
\pi_{w_{k}({\bf L})} ( \beta_{{\bf L}, k}^{\ast \, n}))^{l^{v_{l} (n)} \Theta_{n} ({\bf b},\,  {\bf L  f})}.$ \quad
\item[{(3)}]
If $\sigma_{{\bf l'}}$ denotes the Frobenius morphism associated to ${\bf l'}$ in $G(F_{\bf L}/K)$, then
$$ Tr_{F_{{\bf L}^{\prime}}/F_{\bf L}}
(\blambda_{v ({\bf L}^{\prime}), k}) =
(\blambda_{v({\bf L}), k})^{1 - N({\bf l}^{\prime})^{n}
\cdot \sigma_{{\bf l'}}^{-1}}.$$
\end{itemize}

\end{theorem}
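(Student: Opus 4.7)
\noindent \textbf{Proof plan for Theorem \ref{Theorem ES2}.}

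\emph{Part (1).} The plan is to unwind both sides of the asserted equality using the definition of $\blambda$. Set $M := F_{{\bf L}l^{k}}$ and $M' := F_{{\bf L}l^{k'}}$. By transitivity of transfer,
\[ Tr_{M'/F_{\bf L}} = Tr_{M/F_{\bf L}} \circ Tr_{M'/M}. \]
Under the splitting hypothesis on $w_k({\bf L})$, Lemma \ref{norm-lemma} (in its $l$--adic form, Remark \ref{norm-extend-remark}) gives the norm compatibility
\[ N_{M'/M}(\lambda_{{\bf b},{\bf f}^\ast_{{\bf L}l^{k'}}}(w_{k'}({\bf L}))) = \lambda_{{\bf b},{\bf f}^\ast_{{\bf L}l^{k}}}(w_k({\bf L}))^{\alpha} \]
for an explicit $\alpha \in \zl[G]$ coming from the primes of $M/M'$ needed to move from ${\bf f}^\ast_{{\bf L}l^{k'}}$ to ${\bf f}^\ast_{{\bf L}l^{k}}$. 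Crucially, by our choice $\xi_{l^{k+1}}^l = \xi_{l^k}$, the Bott elements satisfy $r_{k'/k}(\beta_{{\bf L},k'}^{\ast n}) = \beta_{{\bf L},k}^{\ast n}$ after pulling through the reduction $\Z/l^{k'} \twoheadrightarrow \Z/l^k$ and the projection formula. Combining these with the compatibility of the $\gamma_l$ operators at the two levels yields (1).

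\emph{Part (2).} This computation is essentially a rerun of the proof of Theorem \ref{local-lambda-theorem}(2), without the Bockstein. The plan is to apply the commutative diagrams \eqref{diagram1} and \eqref{diagram2} (with $E$ replaced by $F_{{\bf L}l^{k}}$ and $F$ replaced by $F_{\bf L}$) to push $\partial_{F_{\bf L}}$ past the transfer and across the product $\ast$. The $\partial$ of the Hecke character contributes $\Theta^{F_{{\bf L}l^k}}_0({\bf b},{\bf f}^\ast_{{\bf L}l^{k}})$ by equation \eqref{boundary-lambda}, while the Bott element reduces modulo $w_k({\bf L})$ to the corresponding element over the residue field. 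One then invokes the Deligne--Ribet congruences (Theorem \ref{Deligne-Ribet-congruences}) to replace $\Theta_0$ by $\Theta_n$ modulo $w_n(F_{{\bf L}l^k})_l$ (noting $l^k \mid w_n(F_{{\bf L}l^k})_l$ since $\mu_{l^k} \subset F_{{\bf L}l^k}$). Finally, \eqref{restriction-theta} together with $\gamma_l$ converts $\Theta^{F_{{\bf L}l^k}}_n({\bf b},{\bf f}^\ast_{{\bf L}l^{k}})$ into $\gamma_l^{-1}\cdot\Theta_n({\bf b},{\bf Lf})$, and the $\gamma_l$ factor in the definition of $\blambda_{v({\bf L}),k}$ cancels it, producing the stated $l^{v_l(n)}\Theta_n({\bf b},{\bf Lf})$.

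\emph{Part (3).} This is the key Euler system relation, and will be the main technical hurdle. The plan is to write
\[ Tr_{F_{{\bf L}'}/F_{\bf L}}(\blambda_{v({\bf L}'),k}) = Tr_{F_{{\bf L}l^k}/F_{\bf L}}\bigl(Tr_{F_{{\bf L}'l^k}/F_{{\bf L}l^k}}(\lambda_{{\bf b},{\bf f}^\ast_{{\bf L}'l^k}}(w_k({\bf L}'))\ast \beta_{{\bf L}',k}^{\ast n})\bigr)^{\gamma_l} \]
and then to pull the inner transfer inside the $\ast$--product using the projection formula. The Bott element $\beta_{{\bf L}',k}$ is the image of $\beta_{{\bf L},k}$ under the inclusion $F_{{\bf L}l^k} \hookrightarrow F_{{\bf L}'l^k}$ (since we fixed a coherent system $\{\xi_{l^k}\}$), so the Bott part simply moves through the projection formula. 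The Hecke character part is controlled by Lemma \ref{norm-lemma} (in its $l$--adic, ``extended'' form from Remark \ref{norm-extend-remark}) applied to $F_{{\bf L}'l^k}/F_{{\bf L}l^k}$: since the only new prime in the support of ${\bf f}^\ast_{{\bf L}'l^k}$ versus ${\bf f}^\ast_{{\bf L}l^k}$ is ${\bf l}'$, the norm produces an extra factor $(1-\sigma_{{\bf l}'}^{-1})$. The key subtle step is to show that $\sigma_{{\bf l}'}^{-1}$, viewed as acting on the product $\lambda\ast \beta_{{\bf L},k}^{\ast n}$, picks up an extra $N({\bf l}')^n$ from the Bott factor because any lift $\widetilde{\sigma}_{{\bf l}'}$ of $\sigma_{{\bf l}'}$ to $F_{{\bf L}l^k}$ satisfies $\widetilde{\sigma}_{{\bf l}'}(\xi_{l^k}) = \xi_{l^k}^{N({\bf l}')}$ (valid since ${\bf l}' \nmid l$ makes ${\bf l}'$ unramified in the cyclotomic part), so $\widetilde{\sigma}_{{\bf l}'}(\beta_{{\bf L},k}^{\ast n}) = \beta_{{\bf L},k}^{\ast n\cdot N({\bf l}')^n}$. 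Reassembling the factor $(1-\sigma_{{\bf l}'}^{-1})$ into the correct Euler form $(1 - N({\bf l}')^n\sigma_{{\bf l}'}^{-1})$ then finishes (3). The main obstacle throughout is this last identification, which must be carried out with precise control of the lift of Frobenius and careful verification that the $\gamma_l$ factors on both levels (which differ only in indexing sets of primes above $l$, none of which involve ${\bf l}'$) are compatible.
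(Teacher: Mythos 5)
Your plan matches the paper's proof essentially step for step: parts (1) and (2) are handled exactly as the paper does (via Lemma \ref{norm-lemma}, the projection formula, and a rerun of the boundary computation from Theorem \ref{local-lambda-theorem}(2)), and part (3) follows the same chain — transitivity of transfer, projection formula, the norm relation of Lemma \ref{norm-lemma}/Remark \ref{norm-extend-remark} producing the factor $1-\sigma_{\bf l'}^{-1}$, and then twisting this factor through the Bott element to obtain $1 - N({\bf l'})^n\sigma_{\bf l'}^{-1}$, which is precisely the paper's argument. One small point worth making precise in Part (1): the ideals ${\bf f}^\ast_{{\bf L}l^{k'}}$ and ${\bf f}^\ast_{{\bf L}l^k}$ have identical support (passing to a higher cyclotomic level only adds ramification at $l$-adic primes, which are already forced into the support by definition), so the exponent you call $\alpha$ is in fact $1$ and the norm relation of Lemma \ref{norm-lemma} applies with trivial Euler factor.
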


\begin{proof} Part (1)
follows by Lemma \ref{norm-lemma} and the projection formula.
The proof part (2) is similar to the proof of Theorem
\ref{local-lambda-theorem}(2).
Let us prove the third formula above (the Euler system property).
We apply Definition \ref{define Euler system} and Lemma \ref{norm-lemma} combined with Remark \ref{norm-extend-remark} to conclude that we have the following equalities:

\begin{eqnarray}
\nonumber & Tr_{F_{{\bf L}^{\prime}}/F_{{\bf L}}}
(\blambda_{v({\bf L'}), k}) = Tr_{F_{{\bf L}^{\prime}}/F_{{\bf L}}}
Tr_{F_{{{\bf L}^{\prime}}{l^{k}}} / F_{{\bf L}^{\prime}}} ( \lambda_{{\bf b, {{\bf f}^\ast_{{\bf L^{\prime}}{l^{k}}}}}}(w_{k}({\bf L}^{\prime})) \ast
\beta_{{\bf L'}, k}^{\ast \, n})^{ \gamma_{l}} = \\
\nonumber  & Tr_{F_{{{\bf L} l^k}} /F_{{\bf L}}}
Tr_{F_{{\bf L}^{\prime} l^k}/F_{{\bf L} l^k}}
(\lambda_{{\bf b}, {{\bf f}^\ast_{{\bf L^{\prime}}{l^{k}}}}}(w_{k}({\bf L}^{\prime})) \ast
\beta_{{\bf L'}, k}^{\ast \, n})^{ \gamma_{l}}
=\\
\nonumber & Tr_{F_{{{\bf L} l^k}} /F_{{\bf L}}} (N_{F_{{\bf L}^{\prime} l^k}/F_{{\bf L} l^k}}
\lambda_{{\bf b}, {{\bf f}^\ast_{{\bf L^{\prime}}{l^{k}}}}}(w_{k}({\bf L}^{\prime})) \ast
\beta_{{\bf L}, k}^{\ast \, n})^{ \gamma_{l}}
=\\
\nonumber & Tr_{F_{{{\bf L} l^k}} /F_{{\bf L}}}(
\lambda_{{\bf b}, {{\bf f}^\ast_{{\bf L}{l^{k}}}}}(w_{k}({\bf L}))^{1 - {\sigma_{{\bf l'}}}^{-1}} \ast
\beta_{{\bf L}, k}^{\ast \, n})^{ \gamma_{l}}
=\\
\nonumber &  (Tr_{F_{{{\bf L} l^k}} /F_{{\bf L}}}(
\lambda_{{\bf b}, {{\bf f}^\ast_{{\bf L}{l^{k}}}}}(w_{k}({\bf L})) \ast
\beta_{{\bf L}, k}^{\ast \, n})^{ \gamma_{l}})^{1 - N({\bf l'})^{n}\cdot{\sigma_{{\bf l'}}}^{-1}}
= \\
&(\blambda_{v({\bf L}), k})^{1 - N({\bf l}^{\prime})^{n}
\cdot\sigma_{\bf l'}^{-1}}.
\nonumber
\end{eqnarray}
\end{proof}
\begin{remark}\label{Euler-system-remark} Note that if $K=\Q$, then our Euler system construction recaptures that of \cite{BG1}.
Also, if $n=0$, our Euler system lives in $\{F_{\bf L}^\times/F_{\bf L}^{\times l^k}\}_{\bf L}$ and it is given by
$$\blambda_{v({\bf L}), k} \,\, := \,\,
Tr_{F_{{\bf L}{l^{k}}} / F_{\bf L}} ( \lambda_{{\bf b}, {{\bf f}^\ast_{{\bf L}{l^{k}}}}}(w_{k}({\bf L})) )^{ \gamma_{l}} \mod F_{\bf L}^{\times l^k}.
$$
This is a vast generalization of Kolyvagin's Euler system of Gauss sums (mod $l^k$) (see \cite{Rubin}) which can be obtained from our
construction by setting $K=\Q$.
\end{remark}
\bigskip

\section{Appendix: The class groups and \'etale cohomology groups of  rings of $S$-integers}
\bigskip

\noindent
All the cohomology groups in what follows are \'etale cohomology groups. The following two lemmas are very useful in section 6.

\begin{lemma} \label{class group and cohomology}
Let $l$ be an odd prime. Let $L$ be a number field such that there is only one prime $v_l$ over $l$ in $\mathcal{O}_L.$
Let $S :=S_l= \{v_l\}$.
Then for any $m \in \N$ we have the following natural isomorphisms:
\begin{equation}
Cl (\mathcal{O}_{L, S})/l^m \stackrel{\cong}{\longrightarrow} H^2 (\mathcal{O}_{L, S}, \, \Z/l^m (1))
\label{Cl Z mod lm}\end{equation}
\begin{equation}
Cl (\mathcal{O}_{L, S})_l \stackrel{\cong}{\longrightarrow} H^2 (\mathcal{O}_{L, S}, \, \Z_l (1))
\label{Cl Zl}\end{equation}
\begin{equation}
Cl (\mathcal{O}_{L, S}) [l^m] \stackrel{\cong}{\longrightarrow} H^2 (\mathcal{O}_{L, S}, \, \Z_l (1)) [l^m]
\label{Cl lm truncation}\end{equation}
\end{lemma}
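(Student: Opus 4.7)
The plan is to derive all three isomorphisms from the Kummer sequence of \'etale sheaves on $\mathrm{Spec}(\CO_{L,S})$, together with a computation showing that the $l$--primary torsion of the Brauer group $Br(\CO_{L,S})$ vanishes under the hypothesis $S = \{v_l\}$.

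First, I would apply the Kummer sequence $1 \to \mu_{l^m} \to \G_m \to \G_m \to 1$ to obtain the long exact \'etale cohomology sequence. Using the standard identifications $H^0(\CO_{L,S}, \G_m) = \CO_{L,S}^\times$, $H^1(\CO_{L,S}, \G_m) = Cl(\CO_{L,S})$ (the Picard group of $\mathrm{Spec}(\CO_{L,S})$), and $H^2(\CO_{L,S}, \G_m) = Br(\CO_{L,S})$, this produces the short exact sequence
$$0 \to Cl(\CO_{L,S})/l^m \to H^2(\CO_{L,S}, \mu_{l^m}) \to Br(\CO_{L,S})[l^m] \to 0.$$
Since $\mu_{l^m}$ and $\Z/l^m(1)$ agree as \'etale sheaves, the middle term is exactly $H^2(\CO_{L,S}, \Z/l^m(1))$, so isomorphism \eqref{Cl Z mod lm} will follow once I establish that the rightmost term vanishes.

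The key step will be to show $Br(\CO_{L,S})[l^m] = 0$. Here I would combine the Hasse--Brauer--Noether exact sequence
$$0 \to Br(L) \to \bigoplus_{v} Br(L_v) \xrightarrow{\sum \mathrm{inv}_v} \Q/\Z \to 0$$
with the localization description identifying $Br(\CO_{L,S})$ as the subgroup of Brauer classes $\alpha \in Br(L)$ with $\mathrm{inv}_v(\alpha) = 0$ for every finite prime $v \notin S$. Since $l$ is odd, the archimedean contributions are $2$--torsion and so vanish on $l$--primary parts. Hence for $\alpha \in Br(\CO_{L,S})[l^m]$ the local invariants at every $v \ne v_l$ vanish, and Hasse--Brauer--Noether then forces $\mathrm{inv}_{v_l}(\alpha) = 0$ as well; the injectivity of the local invariant map on $Br(L_{v_l})$ finally gives $\alpha = 0$. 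This yields \eqref{Cl Z mod lm}.

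To obtain \eqref{Cl Zl} I would take inverse limits over $m$: the finite group $Cl(\CO_{L,S})$ satisfies $\varprojlim Cl(\CO_{L,S})/l^m = Cl(\CO_{L,S})_l$, while by definition $H^2(\CO_{L,S}, \zl(1)) = \varprojlim H^2(\CO_{L,S}, \mu_{l^m})$, and the vanishing of the right--hand term above trivializes any Mittag--Leffler issue. The isomorphism \eqref{Cl lm truncation} then follows by taking $l^m$--torsion on both sides of \eqref{Cl Zl}. The only real obstacle is the vanishing of $Br(\CO_{L,S})[l^m]$, which is precisely where the hypothesis that $S_l$ consists of the single prime $v_l$ is used; all other steps are formal manipulations of the Kummer sequence and its behavior under inverse limits.
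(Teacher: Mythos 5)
Your proposal is correct and follows essentially the same route as the paper: both arguments run the Kummer sequence for $\mathbb{G}_m$ on $\mathrm{Spec}(\mathcal{O}_{L,S})$ and use the identifications of $H^0$, $H^1$, $H^2$ with units, the class group, and the Brauer group, with the vanishing of $Br(\mathcal{O}_{L,S})$ (or of its $l$--primary part) providing the key input from $|S|=1$. The only difference is one of detail: the paper simply cites the formula $Br(\mathcal{O}_{L,S})\simeq(\mathbb{Q}/\mathbb{Z})^{|S|-1}$, whereas you re-derive the needed $l$--primary vanishing from Hasse--Brauer--Noether, which has the minor advantage of making explicit why real places cause no trouble for odd $l$.
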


\begin{proof} It is well known (see \cite[\S2]{K} and the references therein) that we have the following canonical isomorphisms
\begin{eqnarray}
% \nonumber to remove numbering (before each equation)
\label{units}  H^0(\CO_{L,S}, \G_m) &\simeq & \CO_{L,S}^\times \\
\label{class group}  H^1(\CO_{L,S}, \G_m) &\simeq &  Cl (\mathcal{O}_{L, S}) \\
\label{Brauer group}  H^2(\CO_{L,S}, \G_m)&\simeq &Br(\CO_{L,S})\simeq (\Q/\Z)^{|S|-1}
\end{eqnarray}
where $\G_m$ is the \'etale sheaf associated to the multiplicative group scheme $\G_m$ and $Br$ stands for Brauer group.
Since in our case $|S|=1$, we have $H^2(\CO_{L,S}, \G_m)=0$. Therefore, the long exact sequence in \'etale cohomology associated to the short exact
sequence of \'etale sheaves on $\text{Spec} (\mathcal{O}_{L, S})$
\begin{equation}\label{Kummer sequence}1 \rightarrow \mu_{l^m} \rightarrow \G_m \stackrel{l^m}{\rightarrow} \G_m \rightarrow 1\end{equation}
combined with \eqref{class group} gives the isomorphism
$$Cl (\mathcal{O}_{L, S})/l^m \simeq H^2 (\mathcal{O}_{L, S}, \, \mu_{l^m})=H^2 (\mathcal{O}_{L, S}, \, \Z/l^m(1)),$$
which is precisely \eqref{Cl Z mod lm}. Isomorphism
(\ref{Cl Zl}) follows upon taking an inverse limit with respect to $m$. Isomorphism
(\ref{Cl lm truncation}) follows
upon applying the multiplication by $l^m$ map to both sides of \eqref{Cl Zl}.
\end{proof}

\begin{lemma} Let $E := \Q(\mu_{l^k})$, for some $k\geq 0$.
Let $v_l$ be the unique prime of $\mathcal{O}_{E}$ over $l$ and let $S:=\{v_l\}$. Assume that $n$ is odd.
Then, there are natural isomorphisms of $\Z/l^k [G(E/\Q)]$ modules:
\begin{equation}
(H^1 (\mathcal{O}_{E, S}, \, \Z_l (n+1)) / l^k)^{+} \,\,\cong \,\, \Z/l^k (n+1)
\label{isom. of plus part of divided cohom. group with twisted roots of unity}
\end{equation}
\begin{equation}
Cl (\mathcal{O}_{E, S})\, [l^k]^{-} \otimes  \Z/l^k (n) \,\,\cong
\,\, H^2 (\mathcal{O}_{E, S}, \Z_l (n+1))^{+}\, [l^k]
\label{isom. of plus part of twisted class group with plus part of cohom.}
\end{equation}
Above, the upper scripts $\pm$ stand for the corresponding eigenspaces with respect to the action of the unique
complex conjugation automorphism of $E$.
\end{lemma}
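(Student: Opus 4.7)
The plan is to deduce both statements from the Kummer sequence on $\text{Spec}(\CO_{E,S})$, a twist by $\Z/l^k(n)$, and the structure of the $j$-eigenspaces of the $S$-units. The Bockstein short exact sequence associated to $0\to\Z_l(n+1)\xrightarrow{l^k}\Z_l(n+1)\to\Z/l^k(n+1)\to 0$ reads
\[
0\to H^1(\CO_{E,S},\Z_l(n+1))/l^k \to H^1(\CO_{E,S},\Z/l^k(n+1)) \to H^2(\CO_{E,S},\Z_l(n+1))[l^k]\to 0.
\]
For $n=0$, the identification $H^1(\CO_{E,S},\Z_l(1))\cong \CO_{E,S}^\times\otimes\Z_l$ from Kummer theory combined with Lemma~\ref{class group and cohomology} turns this into the familiar
\[
0\to \CO_{E,S}^\times/l^k \to H^1(\CO_{E,S},\mu_{l^k}) \to Cl(\CO_{E,S})[l^k]\to 0.
\]

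Because $\mu_{l^k}\subset E$, the \'etale sheaf $\Z/l^k(n)$ on $\text{Spec}(\CO_{E,S})$ is isomorphic to the constant sheaf $\Z/l^k$ and $H^0(\CO_{E,S},\Z/l^k(n))=\Z/l^k(n)$ in full; the canonical sheaf identification $\mu_{l^k}\otimes_{\Z/l^k}\Z/l^k(n)=\Z/l^k(n+1)$ then lets me cup with a fixed generator of $\Z/l^k(n)$ to produce a $G(E/\Q)$-equivariant comparison between the Kummer sequence tensored with $\Z/l^k(n)$ (given the diagonal Galois action) and the Bockstein sequence above. The middle vertical arrow is an isomorphism by construction, so the five lemma gives the $G(E/\Q)$-module identifications
\[
H^1(\CO_{E,S},\Z_l(n+1))/l^k\cong (\CO_{E,S}^\times/l^k)\otimes\Z/l^k(n), \qquad H^2(\CO_{E,S},\Z_l(n+1))[l^k]\cong Cl(\CO_{E,S})[l^k]\otimes\Z/l^k(n).
\]

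It remains to take $+$-parts under $j$. Since $n$ is odd, $j$ acts as $-1$ on $\Z/l^k(n)$, hence $(M\otimes\Z/l^k(n))^+=M^-\otimes\Z/l^k(n)$ for any $\Z/l^k[\langle j\rangle]$-module $M$. Applied to the second identification above this is precisely (\ref{isom. of plus part of twisted class group with plus part of cohom.}). For the first, the key input is $(\CO_{E,S}^\times/l^k)^-=\mu_{l^k}$: by Dirichlet's theorem for the CM field $E$ the logarithmic embedding factors through the totally real subfield, forcing $(\CO_E^\times\otimes\Q)^-=0$; since $v_l$ is the unique $S$-place it is fixed by $j$, so the generator of $\CO_{E,S}^\times/\CO_E^\times$ also lies in the $+$-part. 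Therefore $(\CO_{E,S}^\times\otimes\Z_l)^-$ equals its torsion subgroup $\mu_{l^k}$, which is annihilated by $l^k$; combining with the displayed isomorphism gives $((\CO_{E,S}^\times/l^k)\otimes\Z/l^k(n))^+=\mu_{l^k}\otimes\Z/l^k(n)=\Z/l^k(n+1)$, which is (\ref{isom. of plus part of divided cohom. group with twisted roots of unity}).

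The step most in need of care is verifying that the cup-product construction produces a genuine map of short exact sequences, $G(E/\Q)$-equivariantly: this is naturality of the Bockstein under cup product with an element of $H^0(\CO_{E,S},\Z/l^k(n))$, but the sheaf-level identification $\mu_{l^k}\otimes\Z/l^k(n)=\Z/l^k(n+1)$ must be unwound against the connecting maps, so some diagrammatic bookkeeping is needed.
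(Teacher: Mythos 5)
The cup-product argument has a genuine gap at the step you flag as ``diagrammatic bookkeeping,'' and the issue is not merely bookkeeping. To invoke the five lemma you need given maps on all three terms of the two short exact sequences; you only construct the middle one. Producing the outer maps amounts to showing that $-\cup\xi^{\otimes n}$ carries the subgroup $(\CO_{E,S}^\times/l^k)\otimes\Z/l^k(n)$ of $H^1(\CO_{E,S},\mu_{l^k})\otimes\Z/l^k(n)$ onto the subgroup $H^1(\CO_{E,S},\Z_l(n+1))/l^k$ of $H^1(\CO_{E,S},\Z/l^k(n+1))$, and there is no natural reason for this. The two Bocksteins take values in $H^2(\CO_{E,S},\Z_l(1))$ and $H^2(\CO_{E,S},\Z_l(n+1))$ respectively; the sheaves $\Z_l(1)$ and $\Z_l(n+1)$ are \emph{not} isomorphic on $\text{Spec}(\CO_{E,S})$ (the cyclotomic character restricted to $G_E$ has infinite order), so there is no cup pairing $H^2(\CO_{E,S},\Z_l(1))\otimes H^0(\CO_{E,S},\Z/l^k(n))\to H^2(\CO_{E,S},\Z_l(n+1))$ against which to check compatibility. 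If one tries instead to use the mod-$l^k$ Bockstein $\beta$ and the Leibniz rule $\beta(x\cup\xi^{\otimes n})=\beta(x)\cup\xi^{\otimes n}\pm x\cup\beta(\xi^{\otimes n})$, one hits the obstruction that $\beta(\xi^{\otimes n})\neq 0$: since $\mu_{l^{k+1}}\not\subset E$, the restriction map $H^0(\CO_{E,S},\Z/l^{2k}(n))\to H^0(\CO_{E,S},\Z/l^k(n))$ is zero, so $\beta$ is injective on $H^0$. Thus the two kernels need not correspond under $-\cup\xi^{\otimes n}$, and the claimed $G(E/\Q)$-equivariant map of short exact sequences is not exhibited.

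Your computation $(\CO_{E,S}^\times\otimes\Z_l)^-=\mu_{l^k}$ (Dirichlet plus the fact that the unique $l$-adic place is $j$-fixed) is correct, but on its own it only gives $(H^1(\CO_{E,S},\Z_l(1))/l^k)^-$; passing from there to $(H^1(\CO_{E,S},\Z_l(n+1))/l^k)^+$ is exactly the untwisting that cannot be done at level $E$. The paper circumvents this by going up the cyclotomic tower: for \eqref{isom. of plus part of divided cohom. group with twisted roots of unity} it uses that $H^1(\CO_{E^+,S},\Z_l(n+1))$ is finite when $E^+$ is totally real and $n+1\geq 2$ is even (Soul\'e's rank theorem, via Kolster's survey), hence equals its torsion $(\Q_l/\Z_l(n+1))^{G_{E^+}}$, whose reduction mod $l^k$ is $\Z/l^k(n+1)$; for \eqref{isom. of plus part of twisted class group with plus part of cohom.} it passes to $E_\infty=E(\mu_{l^\infty})$, where $\Z_l(n)$ \emph{does} trivialize, identifies $H^2(\CO_{E^+,S},\Z_l(n+1))$ with a twist of the $\Gamma$-coinvariants of the Iwasawa module $\frak X_\infty^+$, applies the duality $\frak X_\infty^+\times Cl(\CO_{E_\infty})_l^-\to\Q_l/\Z_l(1)$, and descends through $Cl(\CO_E)_l^-=[Cl(\CO_{E_\infty})_l^-]^\Gamma$. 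That passage to $E_\infty$ is precisely the input your argument is missing.
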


\begin{proof}
Since $n+1\geq 2$ and $(n+1)$ is even, if we combine \cite[Prop. 2.9]{K} with \cite[p. 239]{K} we obtain the following natural isomorphisms of $\zl[G(E/\Q)]$--modules
$$H^1 (\mathcal{O}_{E, S}, \, \Z_l (n+1))^+\simeq H^1 (\mathcal{O}_{E^+, S}, \, \Z_l (n+1))\simeq [\ql/\zl(n+1)]^{G_{E^+}},$$
where $E^+$ is the maximal real subfield of $E$ and $G_{E^+}$ is its absolute Galois group. Since $2=[E^+(\mu_{l^k}):E^+]$ divides $n+1$,
we have $[\ql/\zl(n+1)]^{G_{E^+}}\simeq\Z/l^\alpha(n+1)$, for some $\alpha\geq k$. Consequently, we have
$$(H^1 (\mathcal{O}_{E, S}, \, \Z_l (n+1)) / l^k)^{+}\simeq H^1 (\mathcal{O}_{E, S}, \, \Z_l (n+1))^+/l^k\simeq \Z/l^k(n+1),$$
which proves isomorphism \eqref{isom. of plus part of divided cohom. group with twisted roots of unity}.
\medskip

Now, let $E_\infty:=E(\mu_{l^\infty})$ and $\Gamma:=G(E_\infty/E)$. Obviously, $\Gamma\simeq G(E^+_\infty/E^+)$. Since the $l$--adic primes
in $E, E^+, E_\infty, E_\infty^+$ are principal, we have $Cl(O_{E,S})=Cl(O_{E})$ and also $Cl(O_{E_\infty,S})=Cl(O_{E_{\infty}})$. Similar equalities hold for $E^+$ and $E_{\infty}^+$.
Moreover, since $\l\ne 2$, the natural map at the level of ideal classes induces an injection
$$Cl(O_{E})_l^-\subseteq Cl(O_{E_\infty})_l^-.$$
(See \cite[Prop. 13.26]{Washington}.) It is also well known (as a direct consequence of the cohomological triviality of $Cl(O_{E(\mu_{l^m})})_l^-$ as a $G(E(\mu_{l^m})/E)$--module, for all $m$, see \cite{Greither})
that the inclusion above induces an equality
\begin{equation}\label{class invariants}Cl(O_{E})_l^- = [Cl(O_{E_\infty})_l^-]^\Gamma.\end{equation}
Now, if $\frak X_\infty^+$ denotes the Galois group of the maximal pro-$l$ abelian extension of $E_\infty^+$ which is unramified outside of $l$, \cite[Prop. 2.20]{K} and \cite[p. 238]{K} give natural isomorphisms
of $\zl[G(E/\Q)]$--modules
$$H^2 (\mathcal{O}_{E, S}, \Z_l (n+1))^{+}\simeq H^2 (\mathcal{O}_{E^+, S}, \Z_l (n+1))\simeq (\frak X_\infty^+(-(n+1))_\Gamma)^\vee,$$
where $\ast_{\Gamma}$ stands for $\Gamma$--coinvariants and ${\ast}^\vee$ stands for Pontrjagin dual. However, we have a natural perfect duality
pairing (see \cite[Prop. 13.32]{Washington})
$$\frak X_\infty^+\times Cl(O_{E_\infty})_l^-\to \ql/\zl(1).$$
When combined with the last displayed isomorphisms this pairing leads to
$$H^2 (\mathcal{O}_{E, S}, \Z_l (n+1))^{+}\simeq [Cl(O_{E_\infty})_l^-(n)]^\Gamma.$$
When combined with \eqref{class invariants} this isomorphism leads to
\begin{eqnarray}
% \nonumber to remove numbering (before each equation)
 \nonumber H^2 (\mathcal{O}_{E, S}, \Z_l (n+1))^{+}[l^k] &\simeq & [Cl(O_{E_\infty})_l[l^k]^-(n)]^\Gamma\\
 \nonumber  &\simeq & [Cl(O_{E_\infty})_l[l^k]^-\otimes\Z/l^k(n)]^\Gamma \\
 \nonumber  &=& Cl(O_{E})_l[l^k]^-\otimes\Z/l^k(n).
\end{eqnarray}
The last equality above is a consequence of the fact that $\Gamma$ acts trivially on $Z/l^k(n)$ (as $\mu_{l^k}\subseteq E^\times$.) This concludes the
proof of isomorphism \eqref{isom. of plus part of twisted class group with plus part of cohom.}.
\end{proof}

%-----------------BIBLIOGRAPHY----------

\bibliographystyle{amsplain}

\end{document}